\newtheorem{theorem}{Theorem}
\newtheorem{lemma}[theorem]{Lemma}
\newtheorem{corollary}[theorem]{Corollary}
\newtheorem{proposition}[theorem]{Proposition}
\theoremstyle{definition}
\newtheorem{definition}[theorem]{Definition}
\newtheorem{example}[theorem]{Example}
\newtheorem{remark}[theorem]{Remark}
\numberwithin{theorem}{section}
\numberwithin{equation}{section}
\newcommand{\B}{\mathbb{B}}
\newcommand{\N}{\mathbb{N}}
\newcommand{\R}{\mathbb{R}}
\newcommand{\C}{\mathbb{C}}
\author{Amel Benali}
\address{Amel Benali, University of Gabes, Faculty of Sciences of Gabes,  LR17ES11,  Mathematics and Applications, 6072, Gabes, Tunisia}
\email{amelmath.kabs@gmail.com}
\author{Ahmed Zeriahi}
\address{Ahmed Zeriahi, Institut de Math\'ematiques de Toulouse, 
Universit\'e de Toulouse, 
CNRS, UPS, 118 route de Narbonne, 
31062 Toulouse cedex 09, France}
\email{ahmed.zeriahi@math.univ-toulouse.fr}
\begin{document}

\thanks{The second author was partially supported by the ANR project GRACK}

\keywords{Complex Monge-Amp\`ere equations, complex Hessian equations, Dirichlet problem, Obstacle problems, maximal subextension,  capacity.}

\subjclass[2010]{31C45, 32U15, 32U40, 32W20, 35J96}

\title[The H\"older continuous subsolution theorem]{The H\"older continuous subsolution theorem for complex Hessian equations }

\date{\today}


\begin{abstract}
 Let  $\Omega \Subset \mathbb C^n$ be a bounded strongly $m$-pseudoconvex domain ($1\leq m\leq n$) and  $\mu$ a positive Borel measure with finite mass on $\Omega$.
Then we solve the H\"older continuous subsolution problem for the complex Hessian equation $(dd^c u)^m \wedge \beta^{n - m} = \mu$ on $\Omega$. Namely, we show that this equation  admits a unique H\"older continuous solution on $\Omega$ with a given H\"older continuous boundary values if it admits a H\"older continuous subsolution on $\Omega$. The main step in solving the problem is to establish a new capacity estimate showing that  the $m$-Hessian measure of a H\"older continuous $m$-subharmonic function on $\Omega$ with zero boundary values is dominated by  the $m$-Hessian capacity with respect to $\Omega$ with an (explicit) exponent $\tau > 1$. 
\end{abstract}

\maketitle

\tableofcontents

\section{Introduction}

Complex Hessian equations are important examples of fully non-linear PDE's of second order on complex manifolds.  They interpolate between (linear) complex Poisson equations ($m = 1$) and (non linear) complex Monge-Amp\`ere equations $(m=n$).
 They appear in many geometric problems, including the $J$-flow
\cite{SW} and quaternionic geometry \cite{AV}. They have attracted the attention of many researchers these last years as we will mention below.

 \subsection{Statement of the problem}
 
Let $\Omega \Subset \C^n$ be a bounded domain and $1 \leq m \leq n$ a fixed integer. 
We consider the following general Dirichlet problem for the complex $m$-Hessian equation : 

\smallskip
\smallskip

{\it The Dirichlet problem:} Let  $g \in \mathcal{C}^{0} (\partial \Omega)$ be a continuous function (the boundary data) and  $\mu$ be a  positive Borel measure on $\Omega$ (the right hand side). The problem is to find a necessary and sufficient condition on $\mu$ such that the following problem admits a solution :   

\begin{equation}\label{eq:DirPb}
\left\{\begin{array}{lcl} 
U \in \mathcal{SH}_m (\Omega) \cap \mathcal {C}^{0} ({\Omega}) \\
 (dd^c U)^m \wedge \beta^{n - m} = \mu   &\hbox{on}\  \Omega  \, \, \, \,  \, \, (\dag)\\
  U_{\mid  \partial \Omega} = g & \hbox{on}\  \partial \Omega \, \, \, \, (\dag \dag)
\end{array}\right.
\end{equation}

The  equation $(\dag)$ must be understood in the sense  of currents  on $\Omega$ as it will be explained in section $2$.
The equality $(\dag \dag)$ means that $\lim_{z \to \zeta} U (z) = g (\zeta)$ for any point $\zeta \in \partial \Omega$. 

Recall that for  a real function $u \in \mathcal{C}^2 (\Omega)$ and each integer $1 \leq k \leq n$,  we denote  by $\sigma_k (u)$ the continuous function defined 	at each point $z \in  \Omega$ as the $k$-th symmetric polynomial of the eigenvalues $\lambda (z) := (\lambda_1 (z), \cdots \lambda_n(z))$ 
of the complex Hessian matrix $  \left(\frac{\partial^2 u }{\partial z_j \partial \bar{z}_k} (z)\right)$ of $u$ i.e. 
$$
\sigma_k (u) (z) := \sum_{1 \leq j_1 < \cdots < j_k \leq n} \lambda_{j_1} (z) \cdots \lambda_{j_k} (z), \, \,  \, \, z \in \Omega.
$$

We say that a real function $u \in \mathcal{C}^2 (\Omega)$ is $m$-subharmonic on $\Omega$ if for any $1 \leq k \leq m$, we have $\sigma_k (u) \geq 0$ pointwise  on $\Omega$.

For $m= 1$, $\sigma_1 (u) = (1 \slash 4) \Delta u$ and for $m = n$, $\sigma_n (u)  = \mathrm{det}  \left(\frac{\partial^2 u }{\partial z_j \partial \bar{z}_k} (z)\right).$
Therefore $1$-subharmonic means subharmonic  and  $n$-subharmonic means plurisubharmonic.

As observed by Z. B\l ocki (\cite{Bl05}), it is possible to define a general notion of $m$-subharmonic functions using the theory of $m$-positive currents (see section 2). Moreover it is possible to define the $k$-Hessian measure $(dd^c u)^k \wedge \beta^{n - k}$ when $1 \leq k \leq m$ for any (locally) bounded $m$-subharmonic function $u$ on $\Omega$ (see section 2). 

When $\mu = 0$, the Dirichlet problem (\ref{eq:DirPb}) can be solved using the Perron method as for the complex Monge-Amp\`ere equation (see \cite{Bl05}, \cite{Ch16a}).

When $g = 0$ and $\mu$ is a positive Borel measure on $\Omega$, the Dirichlet problem is much more difficult. A necessary condition for the existence of a solution to (\ref{eq:DirPb}) is  the existence of a subsolution.  

Therefore a particular case of the Dirichlet problem (\ref{eq:DirPb}) we are interested in can be formulated as follows.    

\smallskip

{\it The H\"older continuous subsolution problem :}
 Let $\mu$ be a positive Borel measure on $\Omega$. Assume that there exists a function $\varphi \in  \mathcal{SH}_m (\Omega) \cap \mathcal{C}^{\alpha} (\bar{\Omega})$ satisfying the following condition :
\begin{equation} \label{eq:subsolution}
\mu \leq (dd^c \varphi)^m \wedge \beta^{n - m}, \, \, \mathrm{on} \, \, \, \, \Omega,  \, \, \, \mathrm{and} \, \, \varphi_{\mid  \partial \Omega} = 0.
\end{equation}

1.  Does the  Dirichlet problem (\ref{eq:DirPb}) admit a H\"older continuous  solution $U_{\mu,g}$ for any  boundary data $g$ which is H\"older continuous on  $\partial{\Omega}$? 
 
2.  In this case, is it possible to estimate  precisely the H\"older exponent of the solution $U_{\mu,g}$ in terms of the  H\"older exponents of  $\varphi$ and  $g$ ?

\smallskip
Our goal in this paper is to answer the first question on the existence of a H\"older continuous solution and give an explicit lower bound of  the H\"older exponent of the solution in terms of the H\"older exponent of the subsolution when the measure $\mu$ has finite total mass.

\subsection{Known results}
There have been many articles on the subject. We will only mention those that are relevant to our study and closely related to our work.
The terminology used below will be defined in the next section.

Assume that $\Omega$ is a smooth strongly $m$-pseudoconvex domain. When the boundary data $g$ is  smooth and the right hand side $\mu = f \lambda_{2n}$ is a measure  with  a smooth positive density $f > 0$, S.Y.  Li proved in \cite{Li04} that the problem has a unique smooth solution. Later, Z. B\l ocki introduced the notion of weak solution and solved the Dirichlet problem for the homogenous Hessian equation in the unit ball in $\C^n$ (\cite{Bl05}).
When the density $0 \leq f \in L^p (\Omega)$ with $p > n \slash m$, Dinew and Ko\l odziej proved the existence of a continuous solution (\cite{DK14}). Assuming moreover that $g$ is H\"older continuous on $\bar \Omega$,  Ngoc Cuong Nguyen proved the H\"older continuity of the solution  under an additional assumption on the density $f$ (\cite{N14}). The general case was considered in \cite{BKPZ16} and (\cite{Ch16}).

\smallskip

On the other hand, S. Ko\l odziej \cite{Kol05} proved that the Dirichlet problem has a bounded plurisubharmonic  solution if (and only if) it has a bounded subsolution with zero boundary values. This is known as the bounded subsolution theorem for plurisubharmonic functions. The same result was proved for the Hessian equation by Ngoc Cuong Nguyen in \cite{N13}.

The H\"older continuous subsolution problem stated above has attracted a lot of attention these last years and was formulated  in \cite{DGZ16} for the complex Monge-Amp\`ere equation.

\smallskip
It has been solved  for the complex Monge-Amp\`ere by Ngoc Cuong Nguyen in \cite{N18a,N18b}.
Recently S. Kolodziej and Ngoc Cuong  Nguyen  solved the H\"older subsolution problem for the Hessian equation under the restrictive assumption that the measure $\mu$ is compactly supported on $\Omega$ (see \cite{KN18}, \cite{KN19}).

\subsection{Main new results}

In this paper we will solve the H\"older continuous subsolution problem for Hessian equations when  $\mu$ is any positive Borel measure with finite mass on $\Omega$. 

Our first main result gives a new comparison inequality which will be applied to  positive Borel measures without restriction on their support. 

 \smallskip
 \smallskip
 
 {\bf Theorem A}.{ \it Let $\Omega \Subset \C^n$ be a  bounded strongly $m$-pseudoconvex  domain.
 Let $\varphi\in \mathcal{SH}_m(\Omega)\cap \mathcal{C}^{\alpha}(\overline\Omega)$ with $0 < \alpha \leq 1$ such that $\varphi = 0$ in $\partial \Omega$.  Then for any $0 < r < m \slash (n-m)$, there exists a constant $A>0$ such that for every compact $K\subset\Omega$,
 $$
\int_ K (dd^c\varphi)^m\wedge\beta^{n-m} \leq A \, \left(\left[\mathrm{Cap}_m (K,\Omega)\right]^{1 + \epsilon} + \left[\mathrm{Cap}_m (K,\Omega)\right]^{1 + m\epsilon}\right), 
$$
 where $\epsilon := \frac{\alpha r}{(2-\alpha) m + \alpha} > 0$.
}

 \smallskip
 \smallskip
 
  The capacity $\mathrm{Cap}_m (K,\Omega)$ will be defined in the next section. The constant $A$ in the theorem is explicit (see formula (\ref{eq:finalConst})).
 
 Observe that the most relevant case in the application of this inequality will be when $\mathrm{Cap}_m (K,\Omega) \leq 1$. In this case the right exponent is $\tau = 1 + \frac{\alpha r}{(2-\alpha) m + \alpha}$.

 Theorem A improves substantially a recent result of \cite{KN19} who proved an  estimate of this kind when the compact set $K \subset  \Omega'$ is contained in a fixed open set $\Omega' \Subset \Omega$, i.e. $K$ stays away from the boundary of $\Omega$.

 When $m=n$ a better estimate was obtained in \cite{N18a} using the exponential integrability of plurisubharmonic functions which fails when $m < n$. 

\smallskip
 
 As a consequence of Theorem A, we will deduce the following result which solves the H\"older continuous subsolution problem.

 \smallskip
 
 \smallskip

 {\bf Theorem B}. { \it Let $\Omega \Subset \C^n$ be a  bounded  strongly  $m$-pseudoconvex  domain and $\mu $ a positive Borel measure on $\Omega$ with finite mass. Assume that there exists $\varphi\in \mathcal{E}^0_m(\Omega)\cap\mathcal{C}^{\alpha}(\overline\Omega)$ with $0 < \alpha \leq  1$ such that   
\begin{equation} \label{eq:subsol}
 \mu \leq (dd^c\varphi)^m\wedge\beta^{n-m}, \, \, \, \mathrm{weakly \, \, on} \, \,  \Omega, \, \, \mathrm{and} \, \, \, \varphi\mid_{\partial \Omega} \equiv 0.
\end{equation} 

Then for any   boundary datum $g $ H\"older continuous on $\partial \Omega$,   the Dirichlet problem (\ref{eq:DirPb}) admits a unique solution $U = U_{g,\mu} $ which is H\"older continuous on $\bar{\Omega}$.
 More precisely, 

1)  if  $g \in  \mathcal C^{1,1} (\partial \Omega)$, $ U \in \mathcal{C}^{\alpha'}(\overline\Omega)$ for any  $0 < \alpha' <  2 \gamma (m,n,\alpha) \frac{\alpha^m}{2^{m}}$, 
where
 \begin{equation} \label{eq:gamma'}
 \gamma (m,n,\alpha) := \frac{ m \alpha}{ m (m + 1) \alpha  +  (n-m) [(2 - \alpha)m + \alpha]},
 \end{equation} 
 
2)  if $g \in  \mathcal C^{2 \alpha} (\partial \Omega)$, then $ U \in \mathcal{C}^{\alpha''}(\overline\Omega)$ for any  $0 < \alpha'' <  \gamma' (m,n,\alpha) \frac{\alpha^m}{2^{m}}$,  where
$$
 \gamma' (m,n,\alpha) := \frac{\alpha}{ m (m + 1) \alpha  +  (n-m) [(2 - \alpha)m + \alpha]}\cdot
$$ } 

\smallskip
 \smallskip
 
 Recall that by definition when $\alpha = 1 \slash 2$, $g \in \mathcal C^1 (\partial \Omega)$ means that $g$ is Lipschitz and when $1 \slash 2 < \alpha \leq1$ and  $2 \alpha = 1 + \theta$ with $0 < \theta \leq 1$,  $ g \in \mathcal{C}^{2 \alpha} (\partial \Omega)$ means that $g \in  \mathcal{C}^1 (\partial \Omega)$ and  and $\nabla g$ is H\"older continuous of exponent $\theta$ on $\partial \Omega$.

 \smallskip
 Let us give a rough idea of the proofs of these results.
 
 \smallskip

  {\it Idea of the proof of Theorem A:} The general idea of the proof is inspired by \cite{KN19}. However, since our measure is not compactly supported nor of finite mass, we need to control the behaviour of the $m$-Hessian measure of $\varphi$ close to the boundary.    This will be done in several steps in section 3 and section 4. 
 
 - The first step is to estimate the mass of the $m$-Hessian measure $\sigma_m (\varphi)$ of a H\"older continuous $m$-subharmonic function $\varphi$ in terms of its regularization $\varphi_\delta$ on any compact set in $\Omega_\delta$. This requires to consider the $m$-subharmonic envelope of $\varphi_\delta$  on $\Omega$ and provide a precise control on its $m$-Hessian measure (see Theorem \ref{thm:obstacle}).
 
 -  The second step is to estimate the mass  of $\sigma_m (\varphi)$  on a compact set close to the boundary in terms of its Hausdorff distance to the boundary  (see Lemma \ref{lem:ComparisonIneq}).   
 
 \smallskip
 \smallskip
 
 {\it  Idea of the proof of Theorem B:} The proof   will be in two steps. 
 
 - The first step relies on a standard method which goes back to \cite{EGZ09} (see also \cite{GKZ08}) in the case of the complex Monge-Amp\`ere equation. This method  consists in proving a  semi-stability inequality  estimating  
$ \sup_{\Omega} (v-u)_+ $ in terms of  $\Vert (v-u)_+\Vert_{L^{1} (\Omega,\mu)}$, where $u $ is the bounded $m$-subharmonic solution to the Dirichlet problem (\ref{eq:DirPb})  and $v$ is any bounded $m$-subharmonic function with the same boundary values as $u$, under the assumption that the measure $\mu$ is dominated by the  $m$-Hessian capacity with an exponent $\tau > 1$ (see Definition \ref{def:cap-domination}).
 
 - The second step  uses an idea which goes back to \cite{DDGKPZ15} in the setting of compact K\"ahler manifolds (see also \cite{GZ17}). It has been also used in the local setting in  \cite{N18a} and \cite{KN19}. It consists in estimating the $L^1 (\mu)$-norm of $v - u$  in terms of the $L^1 (\lambda_{2 n})$-norm of $(v-u)$ where $u$ is the bounded solution to the Dirichlet problem and $v$ is a bounded $m$-subharmonic function on $\Omega$ close to the regularization $ u_\delta$ of $u$. This step requires that the measure $\mu$ is well dominated by  the $m$-Hessian capacity, which is precisely the content of our Theorem A.
  Then using the Poisson-Jensen formula as in \cite{GKZ08}, we see that the $L^1$-norm of $(u_\delta-u)$ is $O (\delta)$ (see Lemma \ref{lem:Poisson-Jensen}) and Lemma \ref{lem:sup-mean}  allows us to finish the proof.

 \section{Preliminary results}
 In this section, we recall the basic properties of $m-$subharmonic functions and some results we will use  throughout the paper. 
 
 \subsection{Hessian potentials}
 For a hermitian $n \times n$ matrix $a = (a_{j,\bar k})$ with complex coefficients, we denote by $\lambda_1, \cdots \lambda_n$ the eigenvalues of the matrix $a$. For any $1 \leq k \leq n$ we define the $k$-th trace of $a$ by the formula

$$
s_k (a) := \sum_{1 \leq j_1 < \cdots < j_k \leq n} \lambda_{j_1} \cdots \lambda_{j_k},
$$
which is the $k$-th elementary symetric polynomial of the eigenvalues $(\lambda_1, \cdots, \lambda_n)$ of $a$.

 Let $\C^n_{(1,1)} $ be the space of real $(1, 1)$-forms on $\C^n$  with constant
coefficients, and define the cone of $m$-postive  $(1,1)$-forms on $\C^n$ by

\begin{equation}\label{eq:mpositive}
\Theta_m := \{\theta \in \C^n_{(1,1)}  \, ; \,  \theta \wedge  \beta^{n - 1} \geq 0, \cdots,  \theta^m \wedge  \beta^{n - m} \geq 0\}.
\end{equation}

\begin{definition} \label{def:mpositive}
1) A smooth $(1,1)$-form $\theta$ on $\Omega$ is said to be $m$-postive on $\Omega$ if for any $z \in \Omega$, $\theta (z) \in \Theta_m$.

2) A function $u:\Omega \rightarrow \mathbb{R}\cup\{-\infty\}$ is said to be  $m-$subharmonic  on $\Omega$ if it is subharmonic on $\Omega$ (not identically $-\infty$ on any component) and  for any collection of smooth $m-$positive $(1,1)-$forms  $\theta_1,...,\theta_{m-1}$ on $\Omega$, the following inequality 
  $$
  dd^c u\wedge \theta_1\wedge...\theta_{m-1} \wedge \beta^{n-m}\geq 0,
  $$
 holds in the sense of currents on $\Omega$.
\end{definition}

We denote by $\mathcal{SH}_m (\Omega) $ the positive convex cone of $m$-subharmonic functions on $\Omega$.

We give below the most basic properties of $m$-subharmonic functions that will be used in the sequel.

\begin{proposition}\label{prop:basic}

\noindent 1.  If $u\in \mathcal{C}^2(\Omega)$, then $u$ is  $m$-subharmonic on $\Omega$ if and only if $(dd^c u)^k\wedge \beta^{n-k}\geq0$
pointwise on $\Omega$ for $k=1, \cdots, m$.

 \noindent 2. $\mathcal{PSH}(\Omega)=\mathcal{SH}_n(\Omega)\subsetneq \mathcal{SH}_{n-1}(\Omega)\subsetneq...\subsetneq \mathcal{SH}_1(\Omega)=\mathcal{SH}(\Omega) $.
 
\noindent 3.  $\mathcal{SH}_m(\Omega) \subset L^1_{loc} (\Omega)$ is a positive convex cone. 
  
\noindent 4.  If $u$ is $m$-subharmonic on $\Omega$ and $f: I \rightarrow\mathbb{R}$ is a  convex, increasing function on some interval containing the image of $u$, then $f\circ u$ is $m$-subharmonic on $\Omega$.

\noindent 5. The limit of a decreasing sequence of  functions in $\mathcal{SH}_m(\Omega)$ is $m$-subharmonic on $\Omega$ when it is not identically $- \infty$ on any component.
 
\noindent 6.  Let $u \in  \mathcal{SH}_m(\Omega)$ and $v \in \mathcal{SH}_m(\Omega') $, where $\Omega'\subset\C^n$ is an open set such that $\Omega \cap \Omega' \neq \emptyset$. If  $u\geq v$ on $\Omega \cap \partial\Omega'$, then the function
   $$
   z \mapsto w(z):=\left\{\begin{array}{lcl}
\max(u(z),v(z)) &\hbox{ if}\ z \in \Omega \cap \Omega'\\
 u(z)  &\hbox{if}\  z \in\Omega\setminus\Omega'\\
\end{array}\right.
$$
is $m$-subharmonic on $\Omega$.

 \end{proposition}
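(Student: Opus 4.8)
\emph{Plan of proof.} The six assertions all reduce to the linear algebra of the cone $\Theta_m$ together with two stability properties of $\mathcal{SH}_m(\Omega)$: stability under convolution with a radial mollifier $\rho_\delta$ and under decreasing sequences. First I would record the linear-algebraic input (following \cite{Bl05}, via G\r{a}rding's theory of the hyperbolic polynomial $s_m$): (a) a constant form $\theta\in\C^n_{(1,1)}$ lies in $\Theta_m$ iff $s_k(\theta)\ge 0$ for $1\le k\le m$, iff $\theta\wedge\gamma_1\wedge\cdots\wedge\gamma_{m-1}\wedge\beta^{n-m}\ge 0$ for all $\gamma_1,\dots,\gamma_{m-1}\in\Theta_m$; (b) the mixed positivity $\gamma_1\wedge\cdots\wedge\gamma_m\wedge\beta^{n-m}\ge 0$ whenever all $\gamma_j\in\Theta_m$; (c) the elementary nesting $\Theta_n\subset\cdots\subset\Theta_1$ (obtained by dropping constraints), the fact that $\beta\in\Theta_m$, and that every semipositive $(1,1)$-form lies in $\Theta_m$. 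I would then prove item~5 first, since the remaining items use it: if $u_j\downarrow u$ with $u_j\in\mathcal{SH}_m(\Omega)$ and $u\not\equiv-\infty$ on any component, then $u$ is subharmonic (each $u_j$ being so by definition), hence $u\in L^1_{loc}$ and $u_j\to u$ in $L^1_{loc}$; therefore, for fixed smooth $m$-positive forms $\theta_1,\dots,\theta_{m-1}$, the signed measures $dd^cu_j\wedge\theta_1\wedge\cdots\wedge\theta_{m-1}\wedge\beta^{n-m}$ converge weakly to $dd^cu\wedge\theta_1\wedge\cdots\wedge\theta_{m-1}\wedge\beta^{n-m}$, and a weak limit of positive measures is positive, so $u\in\mathcal{SH}_m(\Omega)$.

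For item~1, I would note that if $u\in\mathcal C^2(\Omega)$ the coefficient of $dd^cu\wedge\theta_1\wedge\cdots\wedge\theta_{m-1}\wedge\beta^{n-m}$ is a continuous function, so the current inequality in the definition is equivalent to its pointwise validity; evaluating at a point and using constant test forms, this is equivalent by (a) to $dd^cu(z)\in\Theta_m$ for every $z$, i.e. to $(dd^cu)^k\wedge\beta^{n-k}\ge0$ pointwise for $k=1,\dots,m$. For item~2, $\mathcal{SH}_1=\mathcal{SH}$ and $\mathcal{SH}_n=\mathcal{PSH}$ follow from the definition (for $m=1$ the defining condition is just $dd^cu\wedge\beta^{n-1}\ge0$, i.e. $\Delta u\ge0$) together with item~1 and, for $m=n$, the duality in (a); the inclusion $\mathcal{SH}_m\subset\mathcal{SH}_{m-1}$ is immediate from item~1 and $\Theta_m\subset\Theta_{m-1}$ when $u\in\mathcal C^2$, and in general one uses that $u_\delta:=u*\rho_\delta$ is smooth with $dd^cu_\delta=(dd^cu)*\rho_\delta$, hence $m$-positive (the convolution of a positive measure with $\rho_\delta\ge0$ stays positive against each constant test form), so $u_\delta\in\mathcal{SH}_m\cap\mathcal C^\infty\subset\mathcal{SH}_{m-1}$; since $u_\delta\downarrow u$ on relatively compact subsets, item~5 concludes. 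Strictness is exhibited by the quadratics $|z_1|^2+\cdots+|z_{m-1}|^2-A|z_m|^2$ with $0<A\le 1/(m-1)$, whose complex Hessian eigenvalues $(1,\dots,1,-A,0,\dots,0)$ give $s_k\ge0$ for $k<m$ but $s_m=-A<0$. Item~3 then follows: $\mathcal{SH}_m(\Omega)\subset\mathcal{SH}(\Omega)\subset L^1_{loc}(\Omega)$ by item~2, and the convex-cone property is immediate from linearity of the defining current inequalities (and stability of subharmonicity).

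For item~6, I would first prove that $\max(u,v)\in\mathcal{SH}_m(\Omega)$ when $u,v\in\mathcal{SH}_m(\Omega)$. With a smooth regularized maximum $M_\epsilon$ (convex, nondecreasing in each variable, equal to $\max$ off the $\epsilon$-strip, with $M_\epsilon$ decreasing to $\max$), the chain rule writes $dd^cM_\epsilon(u_\delta,v_\delta)$ as $(\partial_1M_\epsilon)\,dd^cu_\delta+(\partial_2M_\epsilon)\,dd^cv_\delta$ plus a positive semidefinite quadratic form in the $\partial$-derivatives of $u_\delta,v_\delta$, hence a semipositive $(1,1)$-form; pairing with $\theta_1\wedge\cdots\wedge\theta_{m-1}\wedge\beta^{n-m}$ and using (b) (and item~1 to see $dd^cu_\delta(z),dd^cv_\delta(z)\in\Theta_m$) gives $M_\epsilon(u_\delta,v_\delta)\in\mathcal{SH}_m$, and two applications of item~5 (first $\epsilon\downarrow0$, then $\delta\downarrow0$) yield the claim. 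Now, with $w$ as in the statement, $w$ is $m$-subharmonic on the open set $\Omega\cap\Omega'$ (by the above) and on $\Omega\setminus\overline{\Omega'}$ (there $w=u$), it is upper semicontinuous on $\Omega$ and satisfies $w\ge u$ throughout, both by the boundary hypothesis. The only remaining point is the defining current inequality in a neighbourhood of $\Omega\cap\partial\Omega'$; this I would handle by the standard localization argument used for plurisubharmonic functions — reducing by item~5 to the case where $u$ is continuous, and then gluing locally with the regularized maximum.

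Finally, for item~4, if $u\in\mathcal C^2$ and $f\in\mathcal C^2$ is convex and increasing, then $dd^c(f\circ u)=f'(u)\,dd^cu+f''(u)\,i\partial u\wedge\bar\partial u$; since $f',f''\ge0$ and $i\partial u\wedge\bar\partial u$ is semipositive, hence in $\Theta_m$, pairing with $\theta_1\wedge\cdots\wedge\theta_{m-1}\wedge\beta^{n-m}$ and using item~1 together with (b) gives $f\circ u\in\mathcal{SH}_m$; one removes the smoothness of $u$ by applying this to $u_\delta\downarrow u$ and invoking item~5, and the smoothness of $f$ by replacing $f$ with its mollifications $f_j:=f*\rho_{1/j}$, which are $\mathcal C^\infty$, convex, increasing and decrease to $f$ (by convexity), so that $f_j\circ u\downarrow f\circ u$ and item~5 applies once more. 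The conceptual heart of the whole argument is the G\r{a}rding-type input (a)--(c) of the first paragraph, on which every item rests; the technically most delicate point is the gluing across $\Omega\cap\partial\Omega'$ in item~6, where one must control the mixed current $dd^cw\wedge\theta_1\wedge\cdots\wedge\beta^{n-m}$ on the Lebesgue-negligible (but not capacity-negligible) set $\partial\Omega'$, which is why I would invoke the classical localization there rather than grind it out.
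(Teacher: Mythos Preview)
The paper does not prove Proposition~\ref{prop:basic}: it is stated as a list of standard properties with no proof, the implicit understanding being that these are established in \cite{Bl05} and \cite{Lu12}. So there is nothing in the paper to compare your sketch against; let me instead assess it on its own.

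Your treatment of items 1--5 is correct and follows the expected route. The G\r{a}rding input (a)--(c) is exactly what drives everything, the mollification-plus-decreasing-limit mechanism is the right reduction, and the strictness example in item~2 (eigenvalues $(1,\dots,1,-A,0,\dots,0)$ with $A\le 1/(m-1)$) is clean and correct. The argument for item~4 via the chain rule and two layers of approximation is also fine; your parenthetical ``by convexity'' for why $f*\rho_{1/j}$ decreases to $f$ is right (radial means of a convex function increase with the radius).

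Item~6 is where your sketch has a genuine gap, which you yourself flag. The phrase ``classical localization argument used for plurisubharmonic functions'' does not transfer: for psh functions that argument rests on the restriction-to-complex-lines characterization, which reduces the gluing to the subharmonic case line by line; there is no analogous restriction characterization for $m$-subharmonic functions when $1<m<n$. Likewise, ``reducing by item~5 to the case where $u$ is continuous'' is not free: item~5 handles decreasing limits, but producing a decreasing sequence of \emph{continuous} $m$-subharmonic approximants of $u$ on all of $\Omega$ is itself nontrivial (in this very paper it is obtained only later, as Corollary~\ref{cor:smoothing}, via the envelope machinery of Section~3). A correct way to close the gap, using only what you have already set up, is: replace $u$ by its mollification $u_\delta$ (smooth, hence continuous, $m$-sh on $\Omega_\delta$, with $u_\delta\ge u\ge v$ on $\Omega_\delta\cap\partial\Omega'$); for each $\epsilon>0$ the glued function built from $u_\delta$ and $v-\epsilon$ agrees with $u_\delta$ in a neighbourhood of $\Omega_\delta\cap\partial\Omega'$ (by continuity of $u_\delta$ and upper semicontinuity of $v$), hence is $m$-sh on $\Omega_\delta$; let $\epsilon\downarrow 0$ (increasing to a usc limit, which is $m$-sh by the Choquet-lemma argument underlying your max-stability proof), then $\delta\downarrow 0$ via item~5. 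This is presumably what you mean by ``grind it out'', but it is not the same argument as in the psh case and deserves to be written.
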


 Another ingredient which will be important is the regularization process.  Let $\chi$ be a fixed smooth positive radial function with compact support in the unit ball $\B \subset \C^n$ and $\int_{\mathbb{C}^n}\chi (\zeta)d\lambda_{2 n}(\zeta)=1$.
   For any $ 0 < \delta < \delta_0 := \mathrm{diam} (\Omega)$, we set 
   $\chi_{\delta}(\zeta)=\frac{1}{\delta^{2n}}\chi (\frac{\zeta}{\delta})$ and $\Omega_{\delta}=\{z \in\Omega;  \mathrm{dist} (z,\partial\Omega)>\delta\}$.
 
 Let $u \in \mathcal{SH}_m (\Omega) \subset L^1_{loc} (\Omega)$ and define its standard $\delta$-regularization by the formula
      
  \begin{equation} \label{eq:reg}
  { u}_{\delta} (z) := \int_{\Omega} u (z - \zeta) \chi_{\delta} (\zeta) d \lambda_{2n} (\zeta), z \in \Omega_{\delta}.
  \end{equation}
  Then it is easy to see that $ {u}_{\delta}$ is $m$-subharmonic and smooth on $\Omega_{\delta}$ and decreases to $u$ on $\Omega$ as $\delta $ decreases to $0$.
  
  The following lemma was proved in \cite{GKZ08}  (see  also \cite{Ze20}).

\begin{lemma}   \label{lem:Poisson-Jensen}
Let $u \in \mathcal{SH}_m (\Omega) \cap L^{1} (\Omega)$. Then for $0 < \delta < \delta_0$, its $\delta$-regularization extends to $\C^n$ by the formula
      
  \begin{equation} \label{eq:regularization}
  { u}_{\delta} (z) := \int_{\Omega} u (\zeta) \chi_{\delta} (z - \zeta) d \lambda_{2n} (\zeta), z \in \C^n,
  \end{equation}
and have the following properties :
 
\noindent 1)  $ { u}_{\delta}$ is a smooth function on $\C^n$ which is $m$-subharmonic on $\Omega_\delta$ and $({u}_\delta)$ decreases to $u$ on $\Omega$ as $\delta $ decreases to $0$;
      
\noindent 2)  for any $0 < \delta < \delta_0$, we have
  \begin{equation} \label{eq:PJ1}
  \int_{\Omega_\delta} ( { u}_{\delta} (z) - u(z)) d\lambda_{2 n}(z) \leq a_n \delta^2 \int_{\Omega_\delta} dd^c u \wedge \beta^{n - 1},
  \end{equation} 
  where $a_n > 0$ is a uniform constant independant of $u$ and $\delta$.
  
  3)  there exists a constant $b_n > 0$ such that if $u \leq 0$ on $\Omega$,
   \begin{equation}  \label{eq:PJ2}
   \int_{\Omega_\delta} (u_\delta - u) d \lambda_{2n}   \leq b_n \delta \Vert u\Vert_1,
\end{equation}   
where $\Vert u\Vert_1 := \int_\Omega \vert u\vert d \lambda_{2 n}$.
\end{lemma}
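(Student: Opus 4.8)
The plan is to reduce the whole statement to a one–dimensional identity expressing $u_\delta$ as a probability average of spherical means of $u$, together with two bounds on a single kernel. Write $N:=2n$, let $\omega_{N-1}$ be the area of the unit sphere, let $\mu_u=\Delta u$ be the Riesz measure of $u$ (a positive Radon measure on $\Omega$, since $m$‑subharmonic functions are subharmonic, and $dd^c u\wedge\beta^{n-1}=c_n\mu_u$ for a dimensional constant $c_n>0$), and let $\mathcal{M}_u(z,r)$ be the mean of $u$ over $\partial B(z,r)$. Writing $\chi(\zeta)=\widetilde\chi(|\zeta|)$ and passing to polar coordinates yields, for every $z$ with $B(z,\delta)\subset\Omega$,
\[
u_\delta(z)=\int_0^1\mathcal{M}_u(z,\delta t)\,d\nu(t),\qquad d\nu(t):=\omega_{N-1}\,\widetilde\chi(t)\,t^{N-1}\,dt,
\]
where $\nu$ is a probability measure on $[0,1]$ because $\int_{\C^n}\chi\,d\lambda_{2n}=1$. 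Granting this, assertion (1) is routine: $u_\delta\in\mathcal C^\infty(\C^n)$ by differentiation under the integral sign; $m$‑subharmonicity on $\Omega_\delta$ holds because regularisation commutes with $dd^c$ and the cone $\overline{\Theta_m}$ is convex (see \cite{Bl05}); and the monotonicity of $r\mapsto\mathcal{M}_u(z,r)$ together with $\mathcal{M}_u(z,r)\downarrow u(z)$ as $r\downarrow 0$ shows that $u_\delta$ decreases to $u$ on $\Omega$.

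For (2) and (3) I would combine the displayed formula with the classical spherical–mean identity $\mathcal{M}_u(z,r)-u(z)=\omega_{N-1}^{-1}\int_0^r s^{1-N}\mu_u(B(z,s))\,ds\in[0,+\infty]$. Substituting, integrating over $z\in\Omega_\delta$, and using Tonelli together with $\mu_u(B(z,s))=\int\mathbf{1}_{\{|y-z|<s\}}\,d\mu_u(y)$, one gets $\int_{\Omega_\delta}(u_\delta-u)\,d\lambda_{2n}=\int_\Omega K_\delta(y)\,d\mu_u(y)$, where $K_\delta(y)$ is the kernel obtained by carrying out the $s$– and $z$–integrations. The key is that $K_\delta$ admits two bounds. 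First, since $\int_0^{\delta t}s^{1-N}\mathbf{1}_{\{|y-z|<s\}}\,ds\le\int_{|y-z|}^{\delta}s^{1-N}\,ds$, a direct computation gives $K_\delta(y)\le C_N\delta^2$ for all $y$, with $K_\delta(y)=0$ unless $\mathrm{dist}(y,\Omega_\delta)<\delta$; since the set where $K_\delta>0$ lies in $\Omega$ (and, restricting the $z$–integration to $\Omega_{2\delta}$, in $\Omega_\delta$), this yields (2). Secondly — and this refinement is what (3) needs — if $z\in\Omega_\delta$ contributes to $K_\delta(y)$ then, as $\mathrm{dist}(\cdot,\partial\Omega)$ is $1$‑Lipschitz, $|y-z|\ge\mathrm{dist}(z,\partial\Omega)-\mathrm{dist}(y,\partial\Omega)>\delta-\mathrm{dist}(y,\partial\Omega)$; so the $z$–integral is over the annulus $\{\,\delta-\mathrm{dist}(y,\partial\Omega)<|y-z|<\delta\,\}$ whenever its inner radius is positive, and one computes $K_\delta(y)\le C_N\,\delta\,\mathrm{dist}(y,\partial\Omega)$ for every $y$ (for $\mathrm{dist}(y,\partial\Omega)\ge\delta$ this is weaker than $K_\delta(y)\le C_N\delta^2$ already shown). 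Hence $\int_{\Omega_\delta}(u_\delta-u)\,d\lambda_{2n}\le C_N\,\delta\int_\Omega\mathrm{dist}(y,\partial\Omega)\,d\mu_u(y)$.

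It then remains to show $\int_\Omega\mathrm{dist}(y,\partial\Omega)\,d\mu_u(y)\le C\,\|u\|_1$ when $u\le0$. For this I would use the Riesz representation $-u=-h_u+\int_\Omega G_\Omega(\cdot,y)\,d\mu_u(y)$, where $G_\Omega\ge0$ is the Green function of $\Omega$ and $h_u$ is the least harmonic majorant of $u$, which satisfies $h_u\le0$ because $0$ is a harmonic majorant of $u$. Dropping $-h_u\ge0$, integrating in $z$ over $\Omega$ and applying Tonelli gives $\|u\|_1=\int_\Omega(-u)\,d\lambda_{2n}\ge\int_\Omega h_1(y)\,d\mu_u(y)$ with $h_1(y):=\int_\Omega G_\Omega(z,y)\,d\lambda_{2n}(z)$; and since $h_1>0$ in $\Omega$, $h_1=0$ on $\partial\Omega$, $-\Delta h_1$ is a positive constant and $\partial\Omega$ is $C^2$, the Hopf boundary lemma gives $h_1(y)\ge c_\Omega\,\mathrm{dist}(y,\partial\Omega)$ on $\Omega$. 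Combining, $\int_\Omega\mathrm{dist}(y,\partial\Omega)\,d\mu_u(y)\le c_\Omega^{-1}\|u\|_1$, which proves (3) with $b_n=C_N/c_\Omega$.

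The routine parts are (1) and the bookkeeping of which set carries the $\mu_u$–mass in (2); the two substantive points are the distance–refined estimate $K_\delta(y)\le C_N\,\delta\,\mathrm{dist}(y,\partial\Omega)$ and, above all, the a priori inequality $\int_\Omega\mathrm{dist}(y,\partial\Omega)\,d\mu_u(y)\le C\,\|u\|_1$ for negative subharmonic $u$ — so the main obstacle will be running the Green–function/Hopf argument for this last bound cleanly, keeping in mind that its constant must depend on the size of $\Omega$ (the inequality in (3) is not scale invariant), so that the stated $b_n$ should be read as depending on $n$ and on $\Omega$.
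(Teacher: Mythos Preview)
Your argument is correct, but the route to (3) is genuinely different from the paper's. The paper does not re-open the kernel: it treats (2) as a black box (citing \cite{GKZ08}) and deduces (3) from it in one stroke by bounding the Riesz mass on $\Omega_\delta$. Using that $-\rho\ge c_1\,\mathrm{dist}(\cdot,\partial\Omega)\ge c_1\delta$ on $\Omega_\delta$ for the smooth strictly $m$-subharmonic defining function $\rho$, and the integration-by-parts inequality (\ref{eq:testinequality}) with $\phi=\rho$, one gets
\[
\int_{\Omega_\delta} dd^c u\wedge\beta^{n-1}\ \le\ (c_1\delta)^{-1}\int_\Omega(-\rho)\,dd^c u\wedge\beta^{n-1}\ \le\ (c_1\delta)^{-1}\int_\Omega(-u)\,dd^c\rho\wedge\beta^{n-1}\ \le\ c_3\,\delta^{-1}\,\|u\|_1,
\]
and plugging this into (2) gives (3) immediately. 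Your approach instead refines the kernel to $K_\delta(y)\le C_N\,\delta\,\mathrm{dist}(y,\partial\Omega)$ and then proves the weighted mass bound $\int_\Omega \mathrm{dist}(y,\partial\Omega)\,d\mu_u(y)\le C\|u\|_1$ via the Riesz decomposition and Hopf lemma. Both arguments rest on the same principle---a barrier comparable to $\mathrm{dist}(\cdot,\partial\Omega)$ (their $\rho$, your $h_1$)---but the paper's version is shorter and stays entirely within the $m$-subharmonic toolbox, while yours is self-contained, yields the slightly sharper intermediate estimate $\int_\Omega \mathrm{dist}(\cdot,\partial\Omega)\,d\mu_u\le C\|u\|_1$, and does not depend on the precise form of (2).

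One small point on (2): your kernel analysis gives $K_\delta$ supported in $\{y:\mathrm{dist}(y,\Omega_\delta)<\delta\}\subset\Omega$, hence a bound with $\mu_u(\Omega)$ on the right, not $\mu_u(\Omega_\delta)$ as stated in the lemma; the parenthetical fix of restricting $z$ to $\Omega_{2\delta}$ yields $\int_{\Omega_{2\delta}}(u_\delta-u)\le C_N\delta^2\mu_u(\Omega_\delta)$, which is not quite (2) either. This does not affect your proof of (3), but if you want (2) exactly as written you should either invoke \cite{GKZ08} as the paper does, or note that the discrepancy is only over $\Omega\setminus\Omega_\delta$ and is harmless for the sequel. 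Your closing remark that $b_n$ depends on $\Omega$ is well taken and is equally true of the paper's constant.
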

\begin{proof} The first property is trivial and the second is proved in \cite{GKZ08}. The thid property follows easily from the second. 

Indeed, since the defining function  $\rho$ of $\Omega$ is smooth and $\vert \nabla \rho \vert > 0$ on $\partial \Omega$, it follows that  there exists a uniform constant $ c_1 > 0$ such that $- \rho (z) \geq  c_1 \, \text{dist} (z, \partial \Omega)$ (see \cite{Ze20} for more details). Then by the integration by parts inequality (\ref{eq:testinequality}), it follows that 
 \begin{eqnarray*} 
 \int_{\Omega_\delta} dd^c u \wedge \beta^{n-1}  & \leq &  c_2  \delta^{-1} \int_{\Omega} (- \rho)  dd^c u \wedge \beta^{n-1}   \\
 &\leq &   c_3  \delta^{-1} \int_{\Omega} (-u) \beta^n, \nonumber
  \end{eqnarray*}
 where $ c_2,   c_3 > 0$ are uniform constants. The inequality (\ref{eq:PJ2}) follows using (\ref{eq:PJ1}.
 \end{proof}
An estimate like  (\ref{eq:PJ2}) was first obtained in  \cite{BKPZ16} (see also \cite{KN19} and\cite{Ze20})).

Let us introduce  the notion of strong $m$-pseudoconvexity that will be used in the sequel.

\begin{definition}  We say that the open set $\Omega$ is  strongly $m$-pseudoconvex if $\Omega$ admits a  defining function $\rho$ which is smooth strictly $m$-subharmonic in a neighbourhood of $\bar \Omega$ and  $\vert \nabla \rho\vert > 0$ on $\partial \Omega = \{\rho = 0\}$. In this case we can choose $\rho $ so that
\begin{equation} \label{eq:stronpconvexity}
(dd^c \rho)^k \wedge \beta^{n - k} \geq \beta^n \, \, \mathrm{for} \, \, 1 \leq k \leq m,
\end{equation}
pointwise on $\Omega$.
\end{definition}

The following lemma is analoguous to a lemma proved in \cite{GKZ08} using mean values rather than convolution.
\begin{lemma} \label{lem:sup-mean} Let $\Omega \Subset \C^n$ be a bounded domain and $u \in  \mathcal{SH} (\Omega) \cap L^{\infty} ({\bar \Omega})$. Assume that $u$ is H\" older continuous  near $\partial\Omega$ with  exponent $\alpha \in ]0,1[$. Then the following properties are equivalent:

$(i)$ $\exists c_1 >0$, ${ u}_\delta := u \star \chi_\delta \leq u  +  c_1 \delta^{\alpha}$ in $\Omega_\delta$,

$(ii)$ $\exists c_2 >0$, $\sup_{\bar B(z,\delta)} u \leq  u +  c_2 \delta^{\alpha}$ in $\Omega_\delta$,

$(iii)$ $\exists c_3 > 0$, $\sup \vert u(z) - u(z')\vert \leq c_3 \vert z-z'\vert^\alpha$, for $z, z' \in \Omega$.

\end{lemma}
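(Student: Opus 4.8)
The plan is to prove the three implications in a cycle: $(iii)\Rightarrow(ii)\Rightarrow(i)\Rightarrow(iii)$, handling the subtlety that the Hölder estimate is only assumed near $\partial\Omega$, so that away from the boundary one uses the classical fact that a bounded subharmonic function is automatically Lipschitz on compact subsets. Throughout, the point is that for a subharmonic function, the convolution $u_\delta$ and the sup over a ball $\bar B(z,\delta)$ are comparable because $u\le u_\delta$ always, while $u_\delta$ is a weighted average of values of $u$ on $\bar B(z,\delta)$, hence bounded above by $\sup_{\bar B(z,\delta)}u$.

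First I would do $(iii)\Rightarrow(ii)$: this is immediate and does not use subharmonicity at all. If $\sup|u(z)-u(z')|\le c_3|z-z'|^\alpha$ for $z,z'\in\Omega$, then for $z\in\Omega_\delta$ and $z'\in\bar B(z,\delta)$ we have $u(z')\le u(z)+c_3\delta^\alpha$, so taking the supremum over $z'$ gives $(ii)$ with $c_2=c_3$. Next, $(ii)\Rightarrow(i)$ is also quick: since $\chi_\delta$ is supported in $B(0,\delta)$ with total mass $1$,
\begin{equation*}
u_\delta(z)=\int_{\B}u(z-\delta\zeta)\chi(\zeta)\,d\lambda_{2n}(\zeta)\le \sup_{\bar B(z,\delta)}u\le u(z)+c_2\delta^\alpha,
\end{equation*}
so $(i)$ holds with $c_1=c_2$. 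Neither of these two steps needs $m$-subharmonicity beyond ordinary subharmonicity, and in fact they need nothing about $u$ at all except boundedness for the integrals to make sense — so they hold on all of $\Omega_\delta$.

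The real content is $(i)\Rightarrow(iii)$. Here I would argue in two regimes. Fix a relatively compact open set $\Omega'\Subset\Omega$ outside of which $u$ is Hölder of exponent $\alpha$ (such an $\Omega'$ exists by hypothesis). On $\bar\Omega\setminus\Omega'$ the Hölder bound is given; on $\Omega'$, since $u$ is bounded and subharmonic on a neighbourhood of $\bar{\Omega'}$, it is Lipschitz — hence Hölder of exponent $\alpha$ — on $\bar{\Omega'}$ by interior gradient estimates for subharmonic functions (or directly: $|u_\delta-u|$ controls the modulus of continuity and $u_\delta$ is smooth with derivatives bounded in terms of $\sup|u|$ and $\delta$). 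The standard mechanism for passing from $(i)$ to a global modulus of continuity is: write, for $|z-z'|=:t$ small,
\begin{equation*}
|u(z)-u(z')|\le |u(z)-u_t(z)|+|u_t(z)-u_t(z')|+|u_t(z')-u(z')|,
\end{equation*}
bound the first and third terms by $c_1 t^\alpha$ using $(i)$ (valid once $z,z'\in\Omega_t$, i.e. for $t$ small relative to the distance of $z,z'$ to $\partial\Omega$), and bound the middle term by $\|\nabla u_t\|_{L^\infty}\,t$. The gradient estimate $\|\nabla u_t\|_{L^\infty(\Omega_{2t})}\le C t^{-1}\,\mathrm{osc}_{\Omega_t}(u_t-u)+\ldots$ — more precisely one uses that $\nabla u_t = (u_t - u)\star\nabla\chi_t/\|\cdot\|$-type identities, giving $\|\nabla u_t\|_\infty \le C t^{-1}\sup(u_t-u)\le Cc_1 t^{\alpha-1}$, so the middle term is $O(t^\alpha)$ as well. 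Combining the three gives $|u(z)-u(z')|\le c_3 t^\alpha$ for points well inside $\Omega$; for points near the boundary one is already inside the region where $(iii)$ is assumed, and a covering/interpolation argument glues the two estimates into a global one.

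The main obstacle I anticipate is the bookkeeping near $\partial\Omega$: the convolution $u_\delta$ is only defined on $\Omega_\delta$, so when $z,z'$ are at distance $\lesssim\delta$ from the boundary the three-term splitting above is unavailable and one must instead invoke the a priori Hölder control there and patch. One must check that the two Hölder constants (the assumed one near the boundary and the one produced by the convolution argument in the interior) can be combined without the exponent degrading, which works because both estimates have the same exponent $\alpha$; a standard chaining argument along a path from $z$ to $z'$, switching between the two estimates as needed, does the job. A secondary technical point is making the gradient bound $\|\nabla u_t\|_\infty\le Cc_1 t^{\alpha-1}$ precise: since $\int\nabla\chi_t\,d\lambda_{2n}=0$ one may replace $u$ by $u-u(z)$ inside the convolution defining $\nabla u_t(z)$, reducing to $\|\nabla u_t(z)\|\le t^{-1}\sup_{\bar B(z,t)}|u-u(z)|\cdot\|\nabla\chi\|_1\le t^{-1}\big(\sup_{\bar B(z,t)}u-u(z)\big)\|\nabla\chi\|_1+\ldots$ — but $\sup_{\bar B(z,t)}u-u(z)$ is exactly controlled by $(ii)$, which we have from $(i)$ by the (already-established, and reversible on the relevant domains) equivalence; alternatively one simply cites the analogous computation in \cite{GKZ08} done with mean values, which the statement advertises as the model for this lemma.
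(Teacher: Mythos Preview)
The paper gives no self-contained proof here: it simply refers to \cite{GKZ08}, an ``observation from \cite{LPT20},'' and \cite{Ze20} for the details. Your cycle $(iii)\Rightarrow(ii)\Rightarrow(i)$ is correct and matches the trivial directions; all the content lies in $(i)\Rightarrow(iii)$, and your sketch of that implication has two genuine problems.

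First, the claim that a bounded subharmonic function is ``automatically Lipschitz on compact subsets'' is false. For instance $u(z)=|z|^{\alpha}$ on the unit disc in $\C$ is subharmonic (its Laplacian $\alpha^{2}|z|^{\alpha-2}$ is nonnegative), bounded, and only H\"older of exponent $\alpha$ at the origin. Interior gradient estimates are a feature of \emph{harmonic} functions, not subharmonic ones; the parenthetical alternative (``$u_\delta$ is smooth with derivatives bounded in terms of $\sup|u|$ and $\delta$'') gives only $|\nabla u_\delta|\le C\|u\|_\infty/\delta$, which says nothing about $u$ itself. So this shortcut is unavailable, and the interior regularity must be extracted from hypothesis $(i)$.

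Second, your gradient bound is circular. After the correct inequality
\[
|\nabla u_t(z)|\le t^{-1}\|\nabla\chi\|_{1}\,\sup_{\bar B(z,t)}|u-u(z)|,
\]
you propose to control the two-sided oscillation on the right via $(ii)$, asserting that $(i)\Rightarrow(ii)$ follows from ``the already-established, and reversible, equivalence.'' But you have only proved $(ii)\Rightarrow(i)$; the reverse implication is essentially the goal and has not been shown. This is precisely the point for which the paper invokes \cite{LPT20}. One non-circular resolution is to feed your inequality back into an iteration on the modulus of continuity $\omega(s):=\sup\{|u(z)-u(z')|:|z-z'|\le s\}$: the three-term splitting at scale $t\ge s$ gives
\[
\omega(s)\le 2c_1 t^{\alpha}+C_\chi\,\frac{s}{t}\,\omega(t),\qquad C_\chi:=\|\nabla\chi\|_{1},
\]
and iterating with $s=\theta t$ for $\theta$ small enough that $C_\chi\theta<\theta^{\alpha}$ yields $\omega(t)\le Ct^{\alpha}$, starting from the trivial bound $\omega(t_0)\le 2\|u\|_\infty$. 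The patching near $\partial\Omega$ with the assumed boundary H\"older estimate is then carried out as you describe.
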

 A  similar lemma  has been recently proved in the compact Hermitian manifold setting in \cite{LPT20}. 
A slight modification of the proof of \cite{GKZ08} with an observation from \cite{LPT20}  works also in our context as it is explained in \cite{Ze20}.

 \begin{remark} \label{rem:HolderBoundary} Recall that $u$ is H\"older continuous  near $\partial\Omega$ with exponent $\alpha \in ]0,1]$ if there exists $\delta_1 > 0$ small enough and a constant $\kappa > 0$ such that for any $\zeta \in \partial \Omega$ and any $0 < \delta < \delta_1$,  
 $$
 \sup_{z \in \Omega (\zeta,\delta)} \vert u (z) - u (\zeta) \vert \leq \kappa \delta^\alpha, \, \, \, \hbox{where} \, \, \, \, \Omega(\zeta,\delta) := \Omega \cap B (\zeta,\delta).
 $$ 

Assume that there exists two functions $v , w $ defined and H\"older continuous with exponent $\alpha $ on a neighbourhood $ U$ of $\partial \Omega$ in $\bar \Omega$ such that 
$v \leq u \leq w$ on $ U$ and $v = u = w$ on $\partial \Omega$. Then $u$ is H\"older continuous with exponent $\alpha $ near $\partial \Omega$.
\end{remark}

\subsection{Complex Hessian operators}

Following \cite{Bl05}, we can define the Hessian operators acting on (locally) bounded $m$-subharmonic functions as follows. 
 Given $u_1, \cdots, u_k \in \mathcal{SH}_m (\Omega) \cap L^{\infty} (\Omega)$ ($1 \leq k \leq m$), one can define inductively the following  positive $(m-k,m-k)$-current on $\Omega$
$$
dd^c u_1 \wedge \cdots \wedge dd^c u_k \wedge \beta^{n - m} := dd^c (u_1 dd^c u_2 \wedge \cdots \wedge dd^c u_k \wedge \beta^{n - m}).
$$

In particular, if $u  \in \mathcal{SH}_m (\Omega) \cap L^{\infty}_{loc} (\Omega)$, the positive current $(dd^c u)^m \wedge \beta^{n-m}$ can be identifed to a  positive Borel measure on $\Omega$, the so called $m$-Hessian measure of $u$ denoted by:
$$
\sigma_m (u) := (dd^c u)^m \wedge \beta^{n-m}.
$$

Observe that when $m= 1$,  $\sigma_1 (u) = dd^c u \wedge \beta^{n-1}$ is the Riesz measure of $u$ (up to a positive constant), while  $\sigma_n (u) = (dd^c u)^n$ is the complex   Monge-Amp\`ere measure of $u$. 

It is then possible to extend Bedford-Taylor theory to this context. 
In particular, Chern-Levine Nirenberg inequalities holds and the Hessian operators are continuous under local uniform convergence and pointwise a.e. monotone convergence 
on $\Omega$ of sequences of functions in  $\mathcal{SH} (\Omega) \cap L^{\infty}_{loc} (\Omega)$ (see \cite{Bl05}, \cite{Lu12}).

We define  $\mathcal{E}_m^0 (\Omega) $ to be the positive convex cone of  negative  functions  $\phi \in \mathcal{SH}^-_m (\Omega) \cap L^{\infty} (\Omega)$ with zero boundary values such that
$$
\int_{\Omega} (dd^c \phi)^m \wedge \beta^{n - m} < + \infty.
$$
These are the "test functions" in  $m$-Hessian Potential Theory integration by parts formula is valid for these functions.

More generally it follows from \cite{Lu12,Lu15} that the following property hlods:  if $\phi \in  \mathcal{E}_m^0 (\Omega) $ and $u , v \in \mathcal{SH}_m (\Omega) \cap L^{\infty} (\Omega)$ with $u \leq 0$, then for $0 \leq k \leq m - 1$, 

\begin{equation} \label{eq:testinequality}
\int_\Omega (-\phi)  dd^c u \wedge (dd^c v)^k \wedge \beta^{n - k-1} \leq \int_\Omega (-u)  dd^c \phi \wedge (dd^c v)^k \wedge \beta^{n - k-1}. 
\end{equation}

An important tool in the corresponding Potential Theory is the Comparison Principle.

\begin{proposition} \label{prop:Comparison Principle}
 Assume that $u,v\in \mathcal{SH}_m(\Omega)\cap L^{\infty}(\Omega)$ and for any $\zeta \in \partial \Omega$, $\liminf_{z \rightarrow \zeta }(u(z)- v(z))\geq 0$.  Then 
 $$
 \int_{\{u<v\}}(dd^c v)^m\wedge\beta^{n-m} \leq \int_{\{u<v\}}(dd^c u)^m\wedge\beta^{n-m}.
 $$
 Consequently, if $(dd^cu)^m\wedge\beta^{n-m}\leq(dd^cv)^m\wedge\beta^{n-m}$ weakly on $\Omega$, then $u \geq v$ on $\Omega$.
 
\end{proposition}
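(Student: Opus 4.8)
The plan is to deduce everything from the standard ``maximum construction'', iterated $m$ times, after a preliminary perturbation that pushes the coincidence set off the boundary. Fix $\varepsilon>0$. Since $\liminf_{z\to\zeta}(u-v)\ge0$ at every $\zeta\in\partial\Omega$ and $\partial\Omega$ is compact, there is a neighbourhood of $\partial\Omega$ in $\Omega$ on which $u>v-\varepsilon$; hence $K_\varepsilon:=\{u\le v-\varepsilon\}$ is compact in $\Omega$, and I fix an open $\Omega'$ with $K_\varepsilon\Subset\Omega'\Subset\Omega$. By the Chern--Levine--Nirenberg inequalities all the mixed currents $(dd^c u)^{j}\wedge(dd^c v)^{m-j}\wedge\beta^{n-m}$, $0\le j\le m$, are positive measures of finite mass on $\Omega'$, which makes the cancellations below legitimate. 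Since $(dd^c(v-\varepsilon))^m=(dd^c v)^m$, it suffices to prove, for every such $\varepsilon$, the inequality over $\{u<v-\varepsilon\}$ with $v-\varepsilon$ in place of $v$; as $\{u<v-\varepsilon\}\nearrow\{u<v\}$ when $\varepsilon\searrow0$, monotone convergence on both sides then yields the stated inequality.

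The key step is a one-step replacement. Let $T$ be a closed positive current of the form $dd^c w_1\wedge\cdots\wedge dd^c w_{m-1}\wedge\beta^{n-m}$ with the $w_i$ bounded $m$-subharmonic, and set $\nu:=dd^c u\wedge T$. I claim
$$\int_{\{u<v-\varepsilon\}}dd^c v\wedge T\ \le\ \int_{\{u<v-\varepsilon\}}dd^c u\wedge T\ +\ \nu(\{u=v-\varepsilon\}).$$
Put $h:=\max(u,v-\varepsilon)$, which is bounded $m$-subharmonic on $\Omega'$ by Proposition \ref{prop:basic} and coincides with $u$ on $\Omega'\setminus K_\varepsilon$, in particular near $\partial\Omega'$. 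Since $T$ is closed and $h-u$ is bounded with compact support in $\Omega'$, the Bedford--Taylor integration-by-parts formula gives $\int_{\Omega'}dd^c h\wedge T=\int_{\Omega'}dd^c u\wedge T$. On the other hand, by the maximum-principle inequality for Hessian measures (part of Bedford--Taylor theory in the $m$-subharmonic setting, cf. \cite{Bl05,Lu12}), the measure $dd^c h\wedge T$ dominates $dd^c v\wedge T$ on $\{u<v-\varepsilon\}$ and $dd^c u\wedge T$ on $\{u>v-\varepsilon\}$, and is $\ge0$ on $\{u=v-\varepsilon\}$; integrating this over $\Omega'$ and subtracting the finite quantity $\int_{\{u>v-\varepsilon\}}dd^c u\wedge T$ from $\int_{\Omega'}dd^c u\wedge T$ gives the claim. (Alternatively one first reduces, by a decreasing approximation, to $v$ continuous, so that $\{u<v-\varepsilon\}$ is open and $dd^c h$ may be identified with $dd^c v$ there directly.)

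Now I iterate: starting from $T=(dd^c v)^{m-1}\wedge\beta^{n-m}$ and replacing one factor $dd^c v$ by $dd^c u$ at each step (each intermediate $T$ is again closed, positive, of the allowed form), after $m$ applications I obtain $\int_{\{u<v-\varepsilon\}}(dd^c v)^m\wedge\beta^{n-m}\le\int_{\{u<v-\varepsilon\}}(dd^c u)^m\wedge\beta^{n-m}+\sum_{j=1}^m\nu_j(\{u=v-\varepsilon\})$, where $\nu_j=(dd^c u)^{j}\wedge(dd^c v)^{m-j}\wedge\beta^{n-m}$ has finite mass on $\Omega'$. The level sets $\{u=v-s\}$, $s>0$, are pairwise disjoint, so each $\nu_j(\{u=v-s\})$ vanishes for all but countably many $s$; picking $\varepsilon_k\searrow0$ outside the exceptional set kills all correction terms, and $k\to\infty$ proves the first assertion. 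For the consequence, assume $(dd^c u)^m\wedge\beta^{n-m}\le(dd^c v)^m\wedge\beta^{n-m}$ and set $v_\delta:=v+\delta\rho$ for small $\delta>0$, with $\rho$ the strictly $m$-subharmonic defining function satisfying $(dd^c\rho)^k\wedge\beta^{n-k}\ge\beta^n$. Then $v_\delta\in\mathcal{SH}_m(\Omega)\cap L^\infty(\Omega)$, $\liminf_{z\to\zeta}(u-v_\delta)=\liminf_{z\to\zeta}(u-v)\ge0$ since $\rho\to0$ at $\partial\Omega$, and expanding $(dd^c v+\delta\,dd^c\rho)^m\wedge\beta^{n-m}$ and keeping the terms of order $0$ and $m$ in $\delta$ gives $(dd^c v_\delta)^m\wedge\beta^{n-m}\ge(dd^c v)^m\wedge\beta^{n-m}+\delta^m\beta^n\ge(dd^c u)^m\wedge\beta^{n-m}+\delta^m\beta^n$. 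The comparison inequality applied to $u$ and $v_\delta$ then forces $\delta^m\lambda_{2n}(\{u<v_\delta\})\le0$, hence $\lambda_{2n}(\{u<v_\delta\})=0$; since $v_\delta\nearrow v$ on $\Omega$ as $\delta\searrow0$, $\lambda_{2n}(\{u<v\})=0$, and $u\ge v$ a.e. improves to $u\ge v$ everywhere because $u$ and $v$ are subharmonic.

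The main obstacle is the pluripotential-theoretic bookkeeping of the second and third paragraphs: justifying the integration-by-parts identity for the singular closed current $T$ against merely bounded $m$-subharmonic data, and controlling the masses the intermediate measures $\nu_j$ may concentrate on the coincidence sets $\{u=v-\varepsilon\}$ (this is precisely what forces both the $\varepsilon$-shift and the countability argument). Everything else — the maximum construction, the iteration, and the $\delta\rho$-perturbation for the consequence — is routine once Bedford--Taylor theory (Chern--Levine--Nirenberg estimates, continuity under monotone limits, and the maximum-principle inequality for Hessian measures) is available in the $m$-subharmonic setting.
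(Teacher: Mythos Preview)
The paper does not give a proof of Proposition~\ref{prop:Comparison Principle}; it is stated as a known result from Bedford--Taylor theory in the $m$-subharmonic setting, with references to \cite{Bl05} and \cite{Lu12}. Your argument is a correct reconstruction of the standard proof one finds there (the $\varepsilon$-shift, the maximum construction, total-mass equality, and the countability trick to dispose of the coincidence sets), so there is nothing in the paper to compare against.

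Two small technical remarks on your write-up. First, in the ``consequence'' part you take $\rho$ to be the defining function, which vanishes on $\partial\Omega$; then $\{u<v_\delta\}$ need not be relatively compact in $\Omega$, and the cancellation of $\int_{\{u<v_\delta\}}(dd^c u)^m\wedge\beta^{n-m}$ is not justified since that integral may be infinite. The one-line fix is either to use $\psi_0(z)=|z|^2-M$ with $M>\sup_\Omega|z|^2$ (so that $v_\delta\le v-\delta'$ on $\Omega$ for some $\delta'>0$ and $\{u<v_\delta\}\Subset\Omega$), or to run the argument on the compact sets $\{u<v_\delta-\varepsilon\}$ and let $\varepsilon\searrow0$ afterwards. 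Second, in your countability argument the domain $\Omega'$ depends on $\varepsilon$, while the finiteness of the $\nu_j$ is asserted on $\Omega'$; to make the ``all but countably many $\varepsilon$'' step rigorous, exhaust $\Omega$ by $\Omega_\ell\Subset\Omega$, note that for each $\ell$ only countably many $s$ have $\nu_j|_{\Omega_\ell}(\{u=v-s\})>0$, and choose $\varepsilon_k\searrow0$ outside the countable union. Both points are routine and do not affect the correctness of your approach.
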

It follows from the comparison principle that if the Dirichlet problem (\ref{eq:DirPb}) admits a solution, then it is unique.

The following result will be  also needed.
\begin{corollary} \label{coro:Comparison Principle} Let $\Omega \Subset \C^n$ be a bounded strongly $m$-pseudoconvex domain. Assume that  $u,v\in \mathcal{SH}_m(\Omega)\cap L^{\infty}(\Omega)$ satisfy $u \leq v$ on $\Omega$ and for any $\zeta \in \partial \Omega$, $\lim_{z \rightarrow \zeta }(u(z)- v(z))= 0$.
Then  for any $\psi \in \mathcal{SH}_m (\Omega) \cap L^{\infty} (\Omega)$ and any $1 \leq k \leq m-1$,
 $$
 \int_{\Omega} dd^c v \wedge (dd^c \psi)^{k} \wedge\beta^{n-k-1} \leq \int_{\Omega} dd^c u \wedge (dd^c \psi)^k \wedge\beta^{n-k-1}.
 $$
\end{corollary}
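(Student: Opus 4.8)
The plan is to deduce the inequality from a Stokes‑type vanishing for a compactly supported difference of bounded $m$‑subharmonic functions, followed by a monotone‑limit argument. Set $T := (dd^c\psi)^k \wedge \beta^{n-k-1}$, a closed positive current of bidegree $(n-1,n-1)$ on $\Omega$; since $1 \le k \le m-1$ and $\psi, u, v \in \mathcal{SH}_m(\Omega)\cap L^{\infty}(\Omega)$, the currents $dd^c u \wedge T$ and $dd^c v \wedge T$ are well defined positive Borel measures on $\Omega$, possibly of infinite total mass (in which case the asserted inequality is trivial, so all the bookkeeping below may harmlessly be read in $[0,+\infty]$). For $\epsilon > 0$ put $w_\epsilon := \max(v,\, u+\epsilon) \in \mathcal{SH}_m(\Omega)\cap L^{\infty}(\Omega)$; since $u \le v$ on $\Omega$ we have $w_\epsilon \searrow \max(v,u) = v$ as $\epsilon \searrow 0$.

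First I would use the boundary hypothesis to localize. Because $u \le v$ and $\lim_{z\to\zeta}(u(z)-v(z))=0$ for every $\zeta \in \partial\Omega$, covering the compact set $\partial\Omega$ by finitely many balls on which $|v-u|<\epsilon$ produces an open neighbourhood $U$ of $\partial\Omega$ in $\C^n$ with $0 \le v-u < \epsilon$ on $U \cap \Omega$. Hence $w_\epsilon = u+\epsilon$ on $U \cap \Omega$, and the function
\[
g_\epsilon := w_\epsilon - (u+\epsilon) = (v-u-\epsilon)_+
\]
is bounded, nonnegative, vanishes on $U \cap \Omega$, and therefore has support contained in the compact set $\overline\Omega\setminus U \Subset \Omega$.

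The core of the argument is the identity $\int_\Omega dd^c w_\epsilon \wedge T = \int_\Omega dd^c u \wedge T$ in $[0,+\infty]$. The two measures coincide on $U \cap \Omega$ (where $w_\epsilon = u+\epsilon$), so it suffices to prove $\int_\Omega dd^c g_\epsilon \wedge T = 0$, equivalently $\int_\Omega dd^c(g_\epsilon T) = 0$ since $T$ is closed. As $g_\epsilon$ is a bounded quasi‑continuous function with compact support in $\Omega$, the current $g_\epsilon T$ is compactly supported and this integral vanishes by Stokes' theorem. To make this rigorous without smoothness of $g_\epsilon$, I would replace $g_\epsilon$ by its regularizations $g_\epsilon \star \chi_\delta$, which for small $\delta$ are smooth with support in a fixed compact subset of $\Omega$, apply Stokes to each, and pass to the limit using that $w_\epsilon \star \chi_\delta \searrow w_\epsilon$ and $u \star \chi_\delta \searrow u$, so that by the monotone continuity of the Hessian operators $dd^c(g_\epsilon \star \chi_\delta)\wedge T \to dd^c g_\epsilon \wedge T$ weakly on $\Omega$.

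Finally I would let $\epsilon \searrow 0$. Since $w_\epsilon \searrow v$ with $v$ bounded, the continuity of the complex Hessian operators under decreasing sequences of bounded $m$‑subharmonic functions gives $dd^c w_\epsilon \wedge T \to dd^c v \wedge T$ weakly on $\Omega$; hence, by lower semicontinuity of the mass of positive measures on the open set $\Omega$ under weak convergence,
\[
\int_\Omega dd^c v \wedge T \ \le\ \liminf_{\epsilon\to0} \int_\Omega dd^c w_\epsilon \wedge T \ =\ \int_\Omega dd^c u \wedge T,
\]
which is the desired inequality. I expect the only genuinely delicate point to be the Stokes identity for $g_\epsilon$, which is merely quasi‑continuous rather than $\mathcal{C}^2$; everything else is routine bookkeeping with the standard continuity properties of Hessian operators recalled in Section 2, and the hypotheses $u \le v$ and $u-v\to 0$ on $\partial\Omega$ enter only to guarantee that $g_\epsilon$ is compactly supported, so that no boundary term survives.
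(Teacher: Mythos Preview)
Your proof is correct and follows essentially the same route as the paper: truncate so that the difference with $u$ is compactly supported, use a Stokes-type identity to get equality of total masses, and conclude by monotone continuity and lower semicontinuity. The paper uses $v_\varepsilon=\max\{u,v-\varepsilon\}$ (increasing to $v$) rather than your $w_\epsilon=\max\{v,u+\epsilon\}=v_\epsilon+\epsilon$ (decreasing to $v$), and it dispenses with your regularization step by simply observing that $dd^c w_\epsilon\wedge T-dd^c u\wedge T=dd^c\big((w_\epsilon-u)T\big)$ is the $dd^c$ of a compactly supported current of order $0$, hence has zero total integral; your caution about the ``genuinely delicate'' Stokes identity is therefore unnecessary.
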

\begin{proof} Fix $\varepsilon > 0$.  From the hypothesis, the exists a compact subset $K \Subset \Omega$ such that $ u \geq v - \varepsilon$ on $\Omega \setminus K$. Then $v_\varepsilon := \max \{u,v-\varepsilon\} \in \mathcal{SH}_m (\Omega) \cap L^{\infty} (\Omega)$ and $v_\varepsilon = u$ on $\Omega \setminus K$.
We claim that this implies that
\begin{equation} \label{eq:masspreservation}
\int_{\Omega} dd^c v_\varepsilon \wedge (dd^c \psi)^{k} \wedge\beta^{n-k-1} = \int_{\Omega} dd^c u \wedge (dd^c \psi)^k \wedge\beta^{n-k-1}.
\end{equation}
Indeed we have in the sense of currents
$$
dd^c v_\varepsilon \wedge (dd^c \psi)^{k} \wedge\beta^{n-k-1} -  dd^c u \wedge (dd^c \psi)^k \wedge\beta^{n-k-1} = dd^c  T,
$$
where $T :=  (v_\varepsilon - u)  (dd^c \psi)^k  \wedge\beta^{n-k-1}$.

Since $T$ is a current of order $0$ with compact support in $\Omega$, it follows that $\int_\Omega dd^c T = 0$, which proves (\ref{eq:masspreservation}).

Now observe that $v_\varepsilon$ increases to $v$ as $\varepsilon$ decreases to $0$. By the monotone continuity of the Hessian operators, it follows that 
$$  dd^c v_\varepsilon \wedge (dd^c \psi)^{k} \wedge\beta^{n-k-1}  \to  dd^c v \wedge (dd^c \psi)^{k} \wedge\beta^{n-k-1}  $$
 weakly on $\Omega$ as $\varepsilon \to 0$. Therefore using (\ref{eq:masspreservation}) we conclude that
\begin{eqnarray*}
\int_{\Omega} dd^c v \wedge (dd^c \psi)^{k} \wedge\beta^{n-k-1} &\leq  &\liminf_{\varepsilon \to 0} \int_{\Omega} dd^c v_\varepsilon \wedge (dd^c \psi)^{k} \wedge\beta^{n-k-1} \\
&  =&  \int_{\Omega} dd^c u \wedge (dd^c \psi)^{k} \wedge\beta^{n-k-1}.
\end{eqnarray*}
\end{proof}

Let us recall the following estimates due to Cegrell (\cite{Ceg04}) for the complex Monge-Amp\`ere operators and extended by Charabati to  complex Hessian operators (\cite{Ch16}).

\begin{lemma} \label{lem:Cegrell} Let  $u, v, w \in\mathcal{E}_m^0(\Omega)$. Then for any $1 \leq k \leq m - 1$
   
 $$
    \begin{array}{lcl}
 \int_{\Omega}dd^cu\wedge(dd^cv)^k\wedge(dd^cw)^{m-k-1}\wedge\beta^{n-m}
     \leq  H_m (u)^{\frac{1}{m}} \, H_m (v)^{\frac{k}{m}} \,  H_m (w)^{\frac{m-k-1}{m}},
  \end{array}
 $$
 where $H_m (u) :=  \int_{\Omega}(dd^c u)^m \wedge \beta^{n-m}$.
 
 In particular, if $\Omega$ is strongly $m$-pseudoconvex, then
 $$
 \int_{\Omega}dd^c u \wedge (dd^c w)^k \wedge\beta^{n-k -1}  \leq  c_{m,n} \left(I_m (u)\right)^{\frac{1}{m}} \left(I_m (w)\right)^{\frac{k}{m}},
 $$
 and 
$$ 
  \int_{\Omega}dd^c u  \wedge \beta^{n-1}  \leq  c_{m,n} \left(I_m (u)\right)^{\frac{1}{m}},
 $$
 where $c_{m,n} > 0$ is a uniform constant.
\end{lemma}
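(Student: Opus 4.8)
\emph{The plan is} to first establish the symmetric H\"older-type inequality
\begin{equation}\label{eq:mlin}
\int_\Omega dd^c u_1\wedge\cdots\wedge dd^c u_m\wedge\beta^{n-m}\ \leq\ \prod_{j=1}^{m} H_m(u_j)^{1/m},\qquad u_1,\dots,u_m\in\mathcal{E}_m^0(\Omega),
\end{equation}
and then to specialise it: the first displayed inequality of the Lemma is exactly (\ref{eq:mlin}) applied to the $m$-tuple in which $u$ occurs once, $v$ occurs $k$ times and $w$ occurs $m-k-1$ times, so that its right-hand side becomes $H_m(u)^{1/m}H_m(v)^{k/m}H_m(w)^{(m-k-1)/m}$. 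Thus everything reduces to (\ref{eq:mlin}) together with the two ``in particular'' assertions.

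For (\ref{eq:mlin}) I would argue in two steps. \emph{First}, a routine approximation lets me assume that all the mixed integrals below are finite: I approximate each $u_j$ from above by a decreasing sequence in $\mathcal{E}_m^0(\Omega)$ of functions smooth up to $\partial\Omega$ (for which every mixed integral is finite), and pass to the limit using the continuity of the complex Hessian operators along decreasing sequences together with the convergence of the total masses $H_m(\cdot)$. \emph{Second}, for $a,b\in\mathcal{E}_m^0(\Omega)$ and any closed positive current $R=dd^c u_3\wedge\cdots\wedge dd^c u_m\wedge\beta^{n-m}$ with $u_i\in\mathcal{E}_m^0(\Omega)$, one has the Cauchy-Schwarz inequality
\begin{equation}\label{eq:CS}
\int_\Omega dd^c a\wedge dd^c b\wedge R\ \leq\ \Big(\int_\Omega (dd^c a)^2\wedge R\Big)^{1/2}\Big(\int_\Omega (dd^c b)^2\wedge R\Big)^{1/2}.
\end{equation}
Indeed its left-hand side is a symmetric bilinear form $B(a,b)$ on $\mathcal{E}_m^0(\Omega)$ (each of these mixed currents is well defined and, being a product of $m$ closed $m$-positive currents wedged with $\beta^{n-m}$, is a positive measure by Bedford-Taylor/B\l ocki theory), and it is positive semidefinite because $B(a,a)=\int_\Omega(dd^c a)^2\wedge R\geq0$; hence the elementary argument (nonnegativity of $B(a+tb,a+tb)$ for all $t\in\R$) gives (\ref{eq:CS}). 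Starting from $\int_\Omega dd^c u_1\wedge\cdots\wedge dd^c u_m\wedge\beta^{n-m}$ and applying (\ref{eq:CS}) repeatedly, each time to two of the factors $dd^c u_j$ (the other $m-2$ factors, together with $\beta^{n-m}$, playing the role of $R$), one reaches after finitely many steps the bound $\prod_{j=1}^m\big(\int_\Omega(dd^c u_j)^m\wedge\beta^{n-m}\big)^{1/m}$; this is exactly the classical symmetrisation lemma for symmetric multilinear forms satisfying a Cauchy-Schwarz inequality, and it establishes (\ref{eq:mlin}).

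For the ``in particular'' statements I would bring in the defining function $\rho$ of $\Omega$. It belongs to $\mathcal{E}_m^0(\Omega)$, being bounded, $m$-subharmonic, $\leq0$, vanishing on $\partial\Omega$, with $H_m(\rho)<\infty$ since $\rho$ is smooth on $\overline\Omega$; denote by $c=c(\Omega,m,n)$ a bound for $H_m(\rho)$. Since $\rho$ is moreover \emph{strictly} $m$-subharmonic near the compact set $\overline\Omega$, a standard property of G\aa rding's cone $\Gamma_m\subset\R^n$ (deleting any $k+1$ coordinates of a vector of $\Gamma_m$ yields a vector of $\Gamma_{m-k-1}\subset\R^{n-k-1}$), together with compactness of $\overline\Omega$, produces a constant $c'=c'(\Omega,m,n)>0$ with $(dd^c\rho)^{m-k-1}\wedge\beta^{n-m}\geq c'\,\beta^{n-k-1}$ on $\Omega$ for $0\leq k\leq m-1$; this is the form that (\ref{eq:stronpconvexity}) takes in our situation. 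Together with the positivity of the mixed Hessian currents, this gives, for $u,w\in\mathcal{E}_m^0(\Omega)$,
\[
\int_\Omega dd^c u\wedge(dd^c w)^k\wedge\beta^{n-k-1}\ \leq\ \frac{1}{c'}\int_\Omega dd^c u\wedge(dd^c w)^k\wedge(dd^c\rho)^{m-k-1}\wedge\beta^{n-m},
\]
and applying (\ref{eq:mlin}) to the $m$-tuple consisting of $u$ (once), $w$ ($k$ times) and $\rho$ ($m-k-1$ times) bounds the right-hand side by $(c')^{-1}c^{(m-k-1)/m}\,H_m(u)^{1/m}H_m(w)^{k/m}$. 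Setting $c_{m,n}:=\max_{0\leq k\leq m-1}(c')^{-1}c^{(m-k-1)/m}$ then gives the second inequality, and its case $k=0$ is the third.

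\emph{The main obstacle} is not the formal computation but two points of rigour: justifying the reduction to finite mixed integrals in the first step (the approximation within $\mathcal{E}_m^0(\Omega)$ and the convergence of the Hessian masses), and, in the last step, passing from the pointwise form inequality $(dd^c\rho)^{m-k-1}\wedge\beta^{n-m}\geq c'\beta^{n-k-1}$ to the domination of the corresponding mixed measures, which has to be argued through the $m$-positivity of the mixed Hessian currents rather than ordinary positivity. Both are treated in the works of Cegrell and Charabati cited for the Lemma, and I would adapt their arguments.
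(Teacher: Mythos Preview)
The paper does not prove this lemma; it simply records it as a known result, citing Cegrell \cite{Ceg04} for the Monge--Amp\`ere case and Charabati \cite{Ch16} for the Hessian extension. Your sketch is precisely the Cegrell argument carried over to the $m$-Hessian setting---iterated Cauchy--Schwarz on the symmetric positive-semidefinite form $B(a,b)=\int_\Omega dd^c a\wedge dd^c b\wedge R$ to obtain the multilinear bound (\ref{eq:mlin}), followed by use of the defining function $\rho\in\mathcal{E}^0_m(\Omega)$ to absorb the extra powers of $\beta$---so you are reproducing what those references do, and the overall strategy is sound.

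One remark on the ``in particular'' step. The G\aa rding property you invoke (deleting coordinates of a vector in $\Gamma_m$ lands in $\Gamma_{m-k-1}$) is true, but it is not the most direct justification of the form comparison $(dd^c\rho)^{m-k-1}\wedge\beta^{n-m}\geq c'\beta^{n-k-1}$ in the sense you need. A cleaner route: since $dd^c\rho$ lies in the \emph{open} cone $\Gamma_m$ at every point of the compact set $\bar\Omega$, there exists $c>0$ with $dd^c\rho-c\beta\in\bar\Gamma_m$ on $\bar\Omega$. Expanding $(dd^c\rho)^{m-k-1}=\bigl((dd^c\rho-c\beta)+c\beta\bigr)^{m-k-1}$ binomially, every term is a product of $m$-positive $(1,1)$-forms, hence gives a positive measure after wedging with $dd^c u\wedge(dd^c w)^k\wedge\beta^{n-m}$; isolating the term $c^{m-k-1}\beta^{m-k-1}$ yields the desired domination with $c'=c^{m-k-1}$. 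This handles directly the second ``obstacle'' you flag.
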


We will need the following generalization of of last part of Lemma \ref{lem:Cegrell} to functions with boundary values not vanishing identically.
\begin{lemma}  \label{lem:Cegrell2} Assume that $g \in C^{1,1} (\partial \Omega)$. Then there exists a constant  $M' = M' (g,\Omega) > 0$ such that for any $0 \leq k \leq m - 1$, any $v \in \mathcal{SH}_m (\Omega)$ with $v\mid_{\partial \Omega}  \equiv g$ and any $\psi \in \mathcal{E}^0_m (\Omega)$, we have 
 \begin{equation} \label{eq:claim}
 \int_\Omega dd^c v \wedge (dd^c \psi)^{k} \wedge \beta^{n-k - 1}  \leq  \,\left(  c_{m,n}  \,  H_m (v)^{1 \slash m}  + M'\right) \, H_m (\psi)^{k)\slash m} ,
 \end{equation}
 where $ c_{m,n} > 0$ is the same constant as in the previous lemma.
 \end{lemma}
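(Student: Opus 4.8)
The plan is to reduce Lemma~\ref{lem:Cegrell2} to the last part of Lemma~\ref{lem:Cegrell} by comparing $v$ with a fixed auxiliary $m$-subharmonic function that has the same boundary values $g$ but is globally controlled. Since $\Omega$ is strongly $m$-pseudoconvex and $g\in\mathcal C^{1,1}(\partial\Omega)$, we may invoke the classical solvability of the homogeneous Dirichlet problem (Perron method, \cite{Bl05}, \cite{Ch16a}) to produce $h\in\mathcal{SH}_m(\Omega)\cap\mathcal C^{0}(\overline\Omega)$ with $(dd^c h)^m\wedge\beta^{n-m}=0$ on $\Omega$ and $h\mid_{\partial\Omega}=g$; the $\mathcal C^{1,1}$ regularity of $g$ together with strong $m$-pseudoconvexity gives a uniform barrier bound, so $h$ is Lipschitz (indeed $\mathcal C^{1,1}$ up to the boundary by the Li/B\l ocki estimates) with $\|h\|_{L^\infty(\Omega)}$ and $\|dd^c h\wedge\beta^{n-1}\|$ bounded by a constant $M'=M'(g,\Omega)$. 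Alternatively, and more elementarily, one can take $h:=A\rho+\sup_{\partial\Omega}g$ (minus a constant) built directly from the defining function $\rho$: for a suitable constant $A$ depending on $g$ and $\Omega$ this $h$ is $m$-subharmonic, $\le$ on $\partial\Omega$ to any competitor, and one instead works with $\max$-type comparisons; I would present whichever is cleaner.

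The key step is then the following decomposition. Given $v\in\mathcal{SH}_m(\Omega)$ with $v\mid_{\partial\Omega}=g$ and $\psi\in\mathcal E^0_m(\Omega)$, I write
$$
\int_\Omega dd^c v\wedge(dd^c\psi)^k\wedge\beta^{n-k-1}
=\int_\Omega dd^c(v-h)\wedge(dd^c\psi)^k\wedge\beta^{n-k-1}
+\int_\Omega dd^c h\wedge(dd^c\psi)^k\wedge\beta^{n-k-1}.
$$
For the second term, since $\psi\in\mathcal E^0_m(\Omega)$ one can estimate $\int_\Omega dd^c h\wedge(dd^c\psi)^k\wedge\beta^{n-k-1}$ using the integration-by-parts inequality \eqref{eq:testinequality} (moving $dd^c$ from $h$ onto $\psi$, legitimate because $h$ is bounded and $\psi$ is a test function) followed by the Cegrell--Charabati estimate of Lemma~\ref{lem:Cegrell} together with the strong pseudoconvexity normalization \eqref{eq:stronpconvexity}; this yields a bound of the form $M'\,H_m(\psi)^{k/m}$ after absorbing $\|h\|_\infty$ and the mass of $dd^c h\wedge\beta^{n-1}$ into $M'$. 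For the first term, $v-h$ is not itself in $\mathcal E^0_m(\Omega)$ but it is a \emph{difference} of bounded $m$-subharmonic functions vanishing on $\partial\Omega$ in the appropriate sense, so Corollary~\ref{coro:Comparison Principle} (applied with the roles of the two functions chosen so the boundary hypothesis $\lim_{z\to\zeta}(v-h)=0$ holds) lets us replace $v$ by a function with zero boundary data at the cost of a controlled error, after which Lemma~\ref{lem:Cegrell} bounds the resulting integral by $c_{m,n}H_m(v)^{1/m}H_m(\psi)^{k/m}$. Combining the two bounds gives exactly \eqref{eq:claim}.

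I expect the main obstacle to be handling the first term cleanly: $v-h$ and the currents $dd^c(v-h)\wedge(dd^c\psi)^k\wedge\beta^{n-k-1}$ are signed, so one cannot directly apply the positivity-based Cegrell inequalities. The right move is to avoid subtracting and instead argue entirely with positive objects: use $u_0:=h$ (or a slightly lowered barrier $h-\varepsilon$) as the competitor in Corollary~\ref{coro:Comparison Principle} to get $\int_\Omega dd^c v\wedge(dd^c\psi)^k\wedge\beta^{n-k-1}\le\int_\Omega dd^c h\wedge(dd^c\psi)^k\wedge\beta^{n-k-1}$ \emph{plus} a correction, OR — more robustly — approximate: pick $w\in\mathcal E^0_m(\Omega)$ with $w\le v-\sup_{\overline\Omega}|h|$ and $w\mid_{\partial\Omega}=0$ (e.g. $w=A\rho$ for $A$ large) so that $v\le w+\text{const}$ is false in the wrong direction — hence the honest route is the boundary-value version: apply Corollary~\ref{coro:Comparison Principle} to the pair $(w,v)$ where $w:=$ the solution with boundary data $g$ and zero Hessian, giving $\int_\Omega dd^c v\wedge(dd^c\psi)^k\wedge\beta^{n-k-1}\le\int_\Omega dd^c w\wedge(dd^c\psi)^k\wedge\beta^{n-k-1}$, and then bound the right-hand side via \eqref{eq:testinequality} and the Cegrell estimate for the bounded function $w$ exactly as above, producing the single term $(c_{m,n}H_m(v)^{1/m}+M')H_m(\psi)^{k/m}$. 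A careful bookkeeping of which function plays which role in the comparison principle — so that the boundary hypotheses are genuinely satisfied and the inequality points the right way — is the delicate part; everything else is a routine invocation of Lemma~\ref{lem:Cegrell}, \eqref{eq:testinequality}, and \eqref{eq:stronpconvexity}.
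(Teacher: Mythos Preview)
Your proposal contains a genuine gap: the ``honest route'' via the maximal function does not work because the inequality in Corollary~\ref{coro:Comparison Principle} points the wrong way. If $w$ is the solution of the homogeneous Dirichlet problem with boundary data $g$ (i.e.\ $(dd^c w)^m\wedge\beta^{n-m}=0$), then the comparison principle gives $v\le w$ on $\Omega$, and applying Corollary~\ref{coro:Comparison Principle} with the pair $(u,v)=(v,w)$ yields
\[
\int_\Omega dd^c w\wedge(dd^c\psi)^k\wedge\beta^{n-k-1}\ \le\ \int_\Omega dd^c v\wedge(dd^c\psi)^k\wedge\beta^{n-k-1},
\]
which is the reverse of what you claim. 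A sanity check: for $g\equiv 0$ the maximal function is $w\equiv 0$, and your inequality would force $\int_\Omega dd^c v\wedge(dd^c\psi)^k\wedge\beta^{n-k-1}\le 0$, which is absurd. The same objection applies to your original decomposition idea, since $v-h\le 0$ and the ``error'' term cannot be bounded from above by the positive Cegrell quantities.

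The missing idea is that one must compare $v$ from \emph{below}, not from above. The paper constructs an explicit $m$-subharmonic function $\tilde v\le v$ with the same boundary values $g$ and with a transparent additive structure: it sets $\tilde v:=v_0+w$, where $w:=L\rho+G$ is a fixed $C^{1,1}$ $m$-subharmonic barrier with $w=g$ on $\partial\Omega$ and $dd^c w\le M_3\,\beta$, and $v_0\in\mathcal E^0_m(\Omega)$ is obtained from the bounded subsolution theorem (Theorem~\ref{thm:boundedsubsolution}) as the solution of $(dd^c v_0)^m\wedge\beta^{n-m}=(dd^c v)^m\wedge\beta^{n-m}$ with zero boundary values. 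Then $\tilde v\le v$ by the comparison principle, so Corollary~\ref{coro:Comparison Principle} gives $I_k(v,\psi)\le I_k(\tilde v,\psi)=I_k(v_0,\psi)+I_k(w,\psi)$; the term $I_k(v_0,\psi)$ is handled by Lemma~\ref{lem:Cegrell} and produces exactly $c_{m,n}H_m(v)^{1/m}H_m(\psi)^{k/m}$ (since $H_m(v_0)=H_m(v)$ by construction), while $I_k(w,\psi)\le M' H_m(\psi)^{k/m}$ follows from $dd^c w\le M_3\,\beta$ and the second inequality of Lemma~\ref{lem:Cegrell}. The use of the subsolution theorem to manufacture $v_0$ with prescribed Hessian mass equal to $H_m(v)$ is precisely what makes the $H_m(v)^{1/m}$ term appear, and is the step your outline does not supply.
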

 \begin{proof}  Fix $0 \leq k \leq m-1$ and set 
 $$
 I_k(v,\psi) :=  \int_\Omega dd^c v \wedge (dd^c \psi)^{k} \wedge \beta^{n-k-1}.
 $$ 
 If  $g\mid_{\partial \Omega} \equiv 0$, then $v \equiv 0$ on $\partial \Omega$, and  the statement with $M' = 0$ follows from   Lemma \ref{lem:Cegrell}.
 \smallskip
 
Now assume that $g \in C^{1,1} (\partial \Omega)$.
There exists  $G \in  C^{1,1} (\bar{\Omega})$ such that $G =   g$ on $\partial \Omega$. By the choice of $\rho $ we can find a large constant $L > 0$ such that $w := L \rho + G$ is $m$-subharmonic on $\Omega$ and for $1 \leq k \leq m$, $(dd^c w)^k \wedge \beta^{n-k} \leq L'_m \beta^n$ pointwise almost everywhere on $\Omega$  for some uniform constant $L'_m > 0$.

By the bounded subsolution theorem (\cite{N13}) there exists $v_0 \in \mathcal{E}^0_m (\Omega)$  solution to the equation $(dd^c v_0)^m \wedge \beta^{n-m} = (dd^c v)^m \wedge  \beta^{n-m}$ with  boundary values $v_0 \equiv 0$. 
 The functions $\tilde v := v_0 + w$ is $m$-subharmonic and bounded on $\Omega$ and $ \tilde v  = g = v$ on $\partial \Omega$. By the comparison principle $\tilde v \leq v$ on $\Omega$.
Moreover by Corollary \ref{coro:Comparison Principle} we have
$$
I_k (v,\psi) \leq I_k (\tilde v,\psi).
$$

It suffices to estimate  $ I_k (\tilde v,\psi) $ by a uniform constant. We have
$$
 I_k (\tilde v,\psi)  =   I_k (v_0, \psi) + I_k (w,\psi)
 $$
 
 Since $v_0 \mid_{\partial \Omega} \equiv 0$, from the previous case it follows that
 \begin{eqnarray*}
 I_k (v_0,\psi)  & \leq & c_{m,n} \left(\int_\Omega (dd^c v_0)^m \wedge \beta^{n-m}\right)^{1 \slash m}  \left(\int_\Omega (dd^c  \psi)^m \wedge \beta^{n-m}\right)^{k \slash m}  \\
 &\leq & c_{m,n} H_m (v)^{1 \slash m}  H_m (\psi)^{k\slash m}.
 \end{eqnarray*}
 
It remains to estimate $ I_k (w,\psi)$.  
Since $w \in C^{1,1} (\bar{\Omega})$, it follows that $dd^c w \leq M_3 \beta$ pointwise almost everywhere on $\Omega$, hence by  Lemma  \ref{lem:Cegrell}, we have
 
 \begin{eqnarray*}
\int_\Omega dd^c w \wedge (dd^c \psi)^k \wedge \beta^{n-k-1}  &\leq & M' \int_\Omega (dd^c \psi) ^k \wedge \beta^{n-k} \\
& \leq & M' H_m (\psi)^{k\slash m},
\end{eqnarray*}
 since $R \geq 1$, where $M' = M'(g) > 0$ depends on  the uniform bound of  $dd^c G$.  
 This proves the inequality of the lemma.
 \end{proof}
 
\subsection{The bounded subsolution theorem} 

Let   $\Omega \Subset \C^n$ be a bounded strongly $m$-pseudoconvex domain. 

Assume there exists $v \in \mathcal{SH}_m (\Omega)\cap L^{\infty} (\Omega)$ such that 
\begin{equation} \label{eq:boundedsubsol}
\mu \leq (dd^c v)^m \wedge \beta^{n -m} \, \, \mathrm{on} \, \, \Omega \, \, \, \hbox{and} \,  \, \, v|_{\partial \Omega} \equiv 0.
\end{equation}
 Ngoc Cuong  Nguyen proved that under this condition, the Dirichlet problem (\ref{eq:DirPb}) admits a unique bounded $m$-subharmonic solution (see \cite{N13}). 

\begin{theorem} \label{thm:boundedsubsolution} (\cite{N13}). Let   $\Omega \Subset \C^n$ be a bounded strongly $m$-pseudoconvex domain and $\mu$ a positive Borel measure on $\Omega$ satisfying the condition (\ref{eq:boundedsubsol}). Then for any $g \in \mathcal C^0 (\partial \Omega)$, there exists a unique $U = U_{g,\mu} \in \mathcal{SH}_m (\Omega)\cap L^{\infty} (\Omega)$ such that $(dd^c U)^m \wedge \beta^{n -m} = \mu$ on $\Omega$ and  $U|_{\partial \Omega} \equiv g.$
\end{theorem}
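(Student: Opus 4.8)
\emph{Plan.} The statement splits into uniqueness and existence. Uniqueness is immediate from the comparison principle (Proposition~\ref{prop:Comparison Principle}): if $U_1,U_2$ are bounded $m$-subharmonic solutions with the same continuous boundary data $g$, applying the principle twice gives $U_1\equiv U_2$. So the plan is devoted to existence, and the natural route is a Perron construction completed by a balayage argument, the analytic heart being a uniform a priori estimate.

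\emph{Step 1: a Perron family.} Fix $g\in\mathcal{C}^{0}(\partial\Omega)$, extend it to $G\in\mathcal{C}^{\infty}(\overline\Omega)$, and, using that the defining function $\rho$ is strictly $m$-subharmonic near $\overline\Omega$, choose $A\gg1$ so that $\psi:=A\rho+G\in\mathcal{SH}_m(\Omega)\cap\mathcal{C}^{0}(\overline\Omega)$ with $\psi|_{\partial\Omega}=g$. Then $w_0:=v+\psi$ is bounded $m$-subharmonic, $w_0|_{\partial\Omega}=g$, and, by the multilinear expansion of the Hessian operator,
$$(dd^cw_0)^m\wedge\beta^{n-m}\ \geq\ (dd^cv)^m\wedge\beta^{n-m}\ \geq\ \mu .$$
Let $h_g$ be the maximal $m$-subharmonic function with boundary values $g$, i.e. the Perron solution of the homogeneous equation $(dd^ch_g)^m\wedge\beta^{n-m}=0$, $h_g|_{\partial\Omega}=g$, which exists on the strongly $m$-pseudoconvex domain $\Omega$ (see \cite{Bl05}). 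Introduce
$$\mathcal{B}:=\bigl\{w\in\mathcal{SH}_m(\Omega)\cap L^{\infty}(\Omega):\ (dd^cw)^m\wedge\beta^{n-m}\geq\mu\ \text{on}\ \Omega,\ \limsup_{z\to\zeta}w(z)\leq g(\zeta)\ \forall\,\zeta\in\partial\Omega\bigr\}.$$
It contains $w_0$, is stable under pairwise maxima, and every member satisfies $w\leq h_g$ by the comparison principle. Set $U:=\bigl(\sup_{w\in\mathcal{B}}w\bigr)^{*}$. Then $w_0\leq U\leq h_g$, so $U$ is bounded and, being squeezed between two continuous functions with boundary values $g$, satisfies $U|_{\partial\Omega}=g$. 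A Choquet-lemma argument together with stability under maxima realizes $U$ as the limit of an increasing, uniformly bounded sequence in $\mathcal{B}$; monotone continuity of the Hessian operator then gives $(dd^cU)^m\wedge\beta^{n-m}\geq\mu$, so $U\in\mathcal{B}$.

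\emph{Step 2: the equation, by balayage.} It remains to upgrade the last inequality to $(dd^cU)^m\wedge\beta^{n-m}=\mu$. I would argue by contradiction: putting $\nu:=(dd^cU)^m\wedge\beta^{n-m}\geq\mu$, if $\nu\neq\mu$ there is a ball $B\Subset\Omega$ with $\nu(B)>\mu(B)$. Solving on $B$ the Dirichlet problem $(dd^c\tilde u)^m\wedge\beta^{n-m}=\mu|_B$ with boundary data $U|_{\partial B}$ — which is admissible because $\mu|_B$ still carries the bounded subsolution $w_0|_B\leq U$ — the comparison principle yields $\tilde u\geq U$ on $B$, with $\tilde u\not\equiv U$ there since $\nu(B)>\mu(B)$; gluing $\tilde u$ inside $B$ to $U$ outside (Proposition~\ref{prop:basic}, part~6) produces an element of $\mathcal{B}$ lying strictly above $U$ at some point, contradicting the maximality of $U$. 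Hence equality holds and $U=U_{g,\mu}$ solves the problem.

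\emph{Step 3: the main obstacle.} The delicate point is the one just used, namely solvability of the restricted Dirichlet problem on a ball, and more generally the whole of existence must be bootstrapped through Ko\l odziej's method (carried out for Hessian equations by Nguyen). From $\mu\leq(dd^cv)^m\wedge\beta^{n-m}$ with $v$ bounded and $v|_{\partial\Omega}=0$, the Chern-Levine-Nirenberg inequalities give $\mu(\Omega)<+\infty$ and a constant $C=C(\Omega,\|v\|_{\infty})$ with $\mu(K)\leq C\,\mathrm{Cap}_m(K,\Omega)$ for every compact $K\subset\Omega$, a domination inherited by $\mu|_B$ for balls $B\Subset\Omega$. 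One then approximates $\mu$ by a sequence of measures that are individually solvable with continuous solutions (measures with densities in $L^p(\Omega)$, $p>n/m$, being solvable by Dinew-Ko\l odziej, \cite{DK14}), arranged so that the capacity domination stays uniform; the solutions $u_j=U_{g,\mu_j}$ then obey a \emph{uniform} $L^{\infty}$ bound via Ko\l odziej's comparison-with-capacity ($L^{\infty}$ a priori) estimate, and — this is the crucial refinement — converge in $m$-capacity, whence $(dd^cu_j)^m\wedge\beta^{n-m}\to(dd^cU)^m\wedge\beta^{n-m}=\mu$ for the limit $U$, trapped between $w_0$ and $h_g$ so that $U|_{\partial\Omega}=g$. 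The hard part is exactly this uniform stability and convergence-in-capacity estimate; granting it, the remainder is a routine interplay of the comparison principle, the gluing property, and Bedford-Taylor theory.
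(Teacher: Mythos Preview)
The paper does not give its own proof of this theorem: it is quoted from \cite{N13} and used as a black box. So there is no ``paper's proof'' to compare your proposal against.

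On its own merits, your write-up is an honest outline rather than a proof. The Perron construction in Step~1 is fine, and you correctly identify in Step~3 that the balayage of Step~2 is circular: solving $(dd^c\tilde u)^m\wedge\beta^{n-m}=\mu|_B$ on a ball with merely bounded (not even continuous) boundary data $U|_{\partial B}$ is exactly the theorem you are trying to prove, specialized to a ball. Nothing in Steps~1--2 breaks the circle.

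Step~3 is where the content lies, and what you write there is indeed the skeleton of Nguyen's argument (approximate $\mu$ by measures with $L^p$ densities, $p>n/m$; solve those by \cite{DK14}; extract a limit via uniform $L^\infty$ bounds and convergence in capacity). But you have only \emph{named} the steps. The two genuinely nontrivial points you gloss over are: (i) arranging the approximating measures $\mu_j$ so that they still admit bounded subsolutions with a \emph{uniform} bound (Nguyen does this via a Radon--Nikodym decomposition of $\mu$ against $(dd^cv)^m\wedge\beta^{n-m}$ and careful truncation), and (ii) the convergence-in-capacity of the $u_j$, which is what allows passage to the limit in the Hessian operator. Your claim that $\mu(K)\le C\,\mathrm{Cap}_m(K,\Omega)$ follows from CLN is also not quite right: that inequality requires a comparison-principle argument, not just CLN, and in any case domination with exponent $1$ does not by itself yield an $L^\infty$ bound---one needs the full subsolution hypothesis. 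In short: correct roadmap, but the hard analysis is asserted rather than carried out.
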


\subsection{The viscosity comparison principle}

In order to prove Theorem A, we will need  to prove an important result (Theorem \ref{thm:obstacle}). The proof of this result uses the viscosity comparison principle which was established for complex Hessian equations by H.C. Lu (\cite{Lu13}) in the spirit of the earlier work by P. Eyssidieux, V. Guedj and the second author on complex Monge-Amp\`ere equations (\cite{EGZ11}).

To state this comparison principle we need some definitions. 

Let $\Omega \Subset \C^n$ be   a bounded domain and $F : \Omega \times \R \longrightarrow \R$ a continuous function {\it non-decreasing} in the last variable.
\begin{definition}
Let $u: \Omega\rightarrow \R\cup\{-\infty\}$ be a function and $q$ be a $\mathcal C^2$ function in a neighborhood of $z_0\in \Omega.$ We say that $q$ touches $u$ from above (resp. below) at $z_0$ if $q(z_0)=u(z_0)$ and $q(z)\geq u(z)$ (resp. $q(z)\leq u(z)$) for every $z$ in a neighborhood of $z_0.$
\end{definition}
\begin{definition}\label{def: viscosity subsolution} 
An upper semicontinuous function $u: \Omega\rightarrow \R$ is  a viscosity subsolution to the equation
\begin{equation}\label{eq: heq 1}
(dd^c u)^m\wedge \beta^{n-m} = F(z,u)\beta^n,
\end{equation}
if for any $z_0\in \Omega$ and any $\mathcal C^2$ function $q$ which touches $u$ from above at $z_0$ then 
$$
\sigma_m(q) \geq F(\cdot,q (z_0))\beta^n, \ \text{at} \ z = z_0.
$$
 We will also say that $\sigma_m(u)\geq F(\cdot,u)\beta^n$ in the viscosity sense at $z_0$ and $q$ is an upper test function for $u$ at $z_0$.
\end{definition}
\begin{definition}\label{def: viscosity supersolution}
A lower semicontinuous function $v : \Omega \rightarrow \R$ is  a viscosity supersolution to (\ref{eq: heq 1}) if for any $z_0\in X$ and any $\mathcal C^2$ function $q$ which touches $v$ from below at $z_0$,
$$
[(dd^c q)^m\wedge \beta^{n-m}]_+\leq F(z,q)\beta^n, \ \text{at} \ z = z_0.
$$ 
Here $[\alpha^m\wedge\beta^{n-m}]_+$ is defined to be $\alpha^m\wedge\beta^{n-m} $  if $\alpha$ is $m$-positive and $0$ otherwise.  We will also say that $\sigma_m(v)_+ \leq F(\cdot,v)\beta^n$ in the viscosity sense at $z_0$ and $q$ is a lower test function for $v$ at $z_0$.
\end{definition}

\begin{remark} If $v \in \mathcal C^2 (\Omega)$ then $\sigma_m(v)\geq  F(z,v)\beta^n$ (resp. $[\sigma_m(v)]_+\leq F(z,v)\beta^n$) holds  on $\Omega$ in the viscosity sense iff it holds in the usual sense.
\end{remark}

\begin{definition}
A continuous function $u: \Omega \rightarrow \R$ is a viscosity solution to (\ref{eq: heq 1}) if it is both a subsolution and a supersolution.
\end{definition}

The first important result in this theory compares  the viscosity and potential subsolutions. 

\begin{proposition}[\cite{Lu13}] \label{prop: viscosity vs potential general case}
 Let $u$ be a bounded upper semi-continuous  function in $\Omega.$ Then the inequality 
\begin{equation}\label{eq: viscosity vs potential 2}
\sigma_m (u)\geq F(\cdot,u)\beta^n
\end{equation}
holds in the viscosity sense on $\Omega$ if and only if $u$ is $m$-subharmonic  and (\ref{eq: viscosity vs potential 2}) holds in the potential sense on $\Omega$.
\end{proposition}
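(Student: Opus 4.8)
\emph{Strategy.} This is the Hessian counterpart of the equivalence between viscosity and potential (sub)solutions established for the complex Monge-Amp\`ere operator in \cite{EGZ11}, and I would prove the two implications separately. For the passage from the potential to the viscosity sense I would argue by contradiction, localizing near a contact point and invoking the comparison principle (Proposition \ref{prop:Comparison Principle}). For the converse I would first show that a bounded upper semicontinuous viscosity subsolution is automatically $m$-subharmonic, using the structure of the cone $\Theta_m$ (see \eqref{eq:mpositive}), and then derive the inequality of measures by a convolution--regularization argument together with the monotone continuity of the Hessian operator.

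\emph{Potential $\Rightarrow$ viscosity.} Let $u\in\mathcal{SH}_m(\Omega)\cap L^{\infty}(\Omega)$ satisfy $(dd^cu)^m\wedge\beta^{n-m}\geq F(\cdot,u)\beta^n$ on $\Omega$, fix $z_0\in\Omega$ and a $\mathcal C^2$ function $q$ touching $u$ from above at $z_0$. Since $u$ is $m$-subharmonic one first checks that $dd^cq(z_0)\in\overline{\Theta_m}$: subtracting the pluriharmonic part of the $2$-jet of $q$ at $z_0$ reduces this to the fact that an $m$-subharmonic function cannot be dominated near a point, with equality there, by a Hermitian quadratic form that fails to be $m$-positive. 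In particular $(dd^cq(z_0))^m\wedge\beta^{n-m}\geq 0$, so we may assume $F(z_0,u(z_0))>0$ and, for a contradiction, that $(dd^cq(z_0))^m\wedge\beta^{n-m}<F(z_0,u(z_0))\beta^n$; let $\delta_0>0$ be the difference of the two densities relative to $\beta^n$. Choose $\eta>0$ so small that $dd^cq+\eta\,dd^c|z-z_0|^2$ is $m$-positive on a ball $B(z_0,r_\eta)$, and for small $c>0$ put $q_c:=q+\eta|z-z_0|^2-c$; then $q_c$ is smooth, bounded and $m$-subharmonic on $B':=B(z_0,\sqrt{c/\eta})\Subset B(z_0,r_\eta)$, it satisfies $q_c=q\geq u$ on $\partial B'$ and $q_c(z_0)<u(z_0)$, and $\{q_c<u\}$ is a Borel subset of $B'$ of positive Lebesgue measure (shrinking to $z_0$ as $c\to0$; positivity of the measure follows from the sub-mean value inequality and upper semicontinuity of $u$). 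Applying the comparison principle on $B'$ to $q_c$ and $u$ gives
\[
\int_{\{q_c<u\}}F(\cdot,u)\,\beta^n\ \leq\ \int_{\{q_c<u\}}(dd^cu)^m\wedge\beta^{n-m}\ \leq\ \int_{\{q_c<u\}}(dd^cq_c)^m\wedge\beta^{n-m}.
\]
On $\{q_c<u\}$ one has $u\to u(z_0)$ uniformly as $c\to0$, hence $F(\cdot,u)\geq F(z_0,u(z_0))-o(1)$, whereas $(dd^cq_c)^m\wedge\beta^{n-m}\leq\big(F(z_0,u(z_0))-\delta_0+O(\eta)+o(1)\big)\beta^n$ there. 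Dividing by $\int_{\{q_c<u\}}\beta^n>0$ and letting $c\to0$ and then $\eta\to0$ forces $\delta_0\leq0$, a contradiction; hence $(dd^cq(z_0))^m\wedge\beta^{n-m}\geq F(z_0,q(z_0))\beta^n$.

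\emph{Viscosity $\Rightarrow$ potential.} Suppose now that $\sigma_m(u)\geq F(\cdot,u)\beta^n$ holds in the viscosity sense, with $u$ bounded upper semicontinuous. If $q$ is $\mathcal C^2$ and touches $u$ from above at $z_0$, then for every positive semidefinite Hermitian form $A$ the function $q+\langle A(z-z_0),z-z_0\rangle$ also touches $u$ from above at $z_0$, so (using $F\geq0$) $(dd^cq(z_0)+A)^m\wedge\beta^{n-m}\geq0$; since this holds for all such $A$, the characterization $\overline{\Theta_m}=\{X:\ (X+A)^m\wedge\beta^{n-m}\geq0\ \text{for every }A\geq0\}$ of the $m$-positive cone forces $dd^cq(z_0)\in\overline{\Theta_m}$, which is precisely the viscosity characterization of $m$-subharmonicity (the case $F\equiv0$ of the present statement); thus $u\in\mathcal{SH}_m(\Omega)$ and $(dd^cu)^m\wedge\beta^{n-m}$ is a well-defined positive Borel measure. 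It then suffices to prove $\int_{\overline B}(dd^cu)^m\wedge\beta^{n-m}\geq\int_{\overline B}F(\cdot,u)\beta^n$ for every ball $B\Subset\Omega$, since the pointwise domination of measures follows by the Lebesgue differentiation theorem, $F(\cdot,u)\beta^n$ being absolutely continuous with bounded density. Regularize: $u_\delta:=u\star\chi_\delta\downarrow u$ is smooth and $m$-subharmonic on $\Omega_\delta$. Each translate $z\mapsto u(z-w)$ with $|w|\leq\delta$ is a viscosity subsolution of $\sigma_m(\cdot)(z)\geq f_\delta(z)\beta^n$, where $f_\delta(z):=\inf_{|w|\leq\delta}F(z-w,u(z-w))\geq0$, with $m$-positive complex Hessian in the viscosity sense; because for each $c\geq0$ the set $\{X\in\overline{\Theta_m}:\ X^m\wedge\beta^{n-m}\geq c\,\beta^n\}$ is convex (concavity of $\sigma_m^{1/m}$ on $\overline{\Theta_m}$), the Jensen-type stability of viscosity subsolutions of a subequation with convex fibres under averaging shows that $u_\delta$, a $\chi_\delta$-average of such functions, is a viscosity --- hence, being smooth, a classical --- subsolution: $(dd^cu_\delta)^m\wedge\beta^{n-m}\geq f_\delta\beta^n$ on $\Omega_\delta$. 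Letting $\delta\to0$, the left-hand side converges weakly to $(dd^cu)^m\wedge\beta^{n-m}$ by monotone continuity, while $f_\delta$ increases to $\liminf_{w\to0}F(\cdot-w,u(\cdot-w))$, which coincides with $F(\cdot,u)$ outside a set of Lebesgue measure zero by the quasicontinuity of $m$-subharmonic functions; combining weak convergence on the compact $\overline B$ with monotone convergence of $\int_{\overline B}f_\delta\beta^n$ gives the required ball estimate.

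\emph{Main difficulty.} The delicate point is the last step --- upgrading the pointwise viscosity inequality to an inequality of Borel measures. Besides the linear algebra of the G\r{a}rding cone $\Theta_m$ (convexity of $\overline{\Theta_m}$, concavity of $\sigma_m^{1/m}$ on it, and the two characterizations of $\overline{\Theta_m}$ used above, which must be stated with care), it rests on the stability of viscosity subsolutions of a subequation with convex fibres under averaging; in the non-smooth setting this is obtained through sup-convolutions and Aleksandrov's differentiability theorem, exactly as in the Monge-Amp\`ere case treated in \cite{EGZ11}.
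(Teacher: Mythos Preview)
The paper does not actually prove this proposition: it is quoted verbatim from \cite{Lu13}, and the reader is referred there and to \cite{EGZ11} for the argument. So there is no ``paper's own proof'' to compare against.

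Your outline is essentially the one carried out in \cite{Lu13}, which in turn transposes the Monge--Amp\`ere argument of \cite{EGZ11} to the Hessian setting. Two small remarks. First, you tacitly assume $F\geq 0$ when deducing $dd^c q(z_0)\in\overline{\Theta_m}$ from $(dd^cq(z_0)+A)^m\wedge\beta^{n-m}\geq F(z_0,u(z_0))\beta^n$; this is harmless in the applications of the paper (where $F=e^{j(u-h)}\sigma_m^+(h)\geq 0$), but it is worth stating, since the paper's standing hypothesis on $F$ is only continuity and monotonicity in the last variable. Second, the passage ``average of translated viscosity subsolutions is again a viscosity subsolution'' is exactly the delicate step; your final paragraph correctly identifies that in \cite{EGZ11,Lu13} this is not done by a direct Jensen/averaging argument but by first passing to sup-convolutions (which are semiconvex, hence twice differentiable a.e.\ by Aleksandrov), obtaining the pointwise inequality a.e., and only then mollifying. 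The direct averaging heuristic you sketch is plausible for subequations with convex fibres, but if you want a self-contained proof you should follow the sup-convolution route rather than invoke an unreferenced ``Jensen-type stability'' principle.
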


Now we can state the viscosity comparison principle.

\begin{theorem} [\cite{Lu13}]\label{thm: viscosity comparison principle}
 Let $u : \Omega \longrightarrow \R$ be a bounded viscosity subsolution and $v : \Omega \longrightarrow \R$ be a viscosity supersolution of the equation
$$
\sigma_m(u)=F(\cdot,u)\beta^n,
$$ 
on $\Omega$. If $u \leq v$ on $\partial \Omega$  then $u\leq v$ on $\Omega.$
\end{theorem}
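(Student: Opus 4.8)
The plan is to use the classical viscosity device of \emph{doubling the variables} together with the Jensen--Ishii lemma (the ``theorem on sums''), preceded by a perturbation of the subsolution that makes it \emph{strictly} subharmonic; this perturbation is forced on us because $\sigma_m$ is degenerate elliptic and $F$ is only non-decreasing, not strictly increasing, in its last variable. Identify $\C^n$ with $\R^{2n}$; for a real symmetric $2n\times 2n$ matrix $X$ let $X_\C$ denote the associated Hermitian $n\times n$ matrix, so that $X\le Y$ (as real matrices) implies $X_\C\le Y_\C$ (as Hermitian matrices), and for a $\mathcal C^2$ function $q$ one has $\sigma_m(q)(z_0)=s_m\big(q_\C(z_0)\big)\,\beta^n$ (normalizations absorbed). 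Write $\Gamma_m$ for the cone of Hermitian matrices $a$ with $s_1(a)\ge0,\dots,s_m(a)\ge0$, i.e. those whose associated $(1,1)$-form lies in the cone $\Theta_m$ defined in (\ref{eq:mpositive}). I use the standard properties of this G{\aa}rding cone: it is convex and closed upward (if $a\in\Gamma_m$ and $b\ge0$ then $a+b\in\Gamma_m$), $s_m$ is non-decreasing on $\Gamma_m$ with respect to the order of Hermitian matrices, and $s_m^{1/m}$ is concave and $1$-homogeneous on $\Gamma_m$, hence $s_m(a+b)^{1/m}\ge s_m(a)^{1/m}+s_m(b)^{1/m}$ for $a,b\in\Gamma_m$. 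Finally I use that a bounded viscosity subsolution is automatically $m$-subharmonic (Proposition \ref{prop: viscosity vs potential general case}), so that the Hessian parts of all its (limiting) superjets lie in $\Gamma_m$.

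\emph{Step 1 (reduction to a strict subsolution).} Choose a ball $B(z_*,R)\supset\bar{\Om}$ and set $\phi(z):=|z-z_*|^2-R^2$, a smooth strictly $m$-subharmonic function with $\phi\le0$ on $\bar{\Om}$ and $c:=s_m(dd^c\phi)>0$ a constant. For $\eta\in(0,1)$ put $u_\eta:=u+\eta\phi$; then $u_\eta\le u\le v$ on $\partial\Om$, $u_\eta$ is bounded upper semicontinuous, and $u_\eta\to u$ uniformly as $\eta\to0$. If $q$ is a $\mathcal C^2$ upper test function for $u_\eta$ at $z_0\in\Om$, then $q-\eta\phi$ is one for $u$ at $z_0$, so $q_\C(z_0)-\eta\,dd^c\phi\in\Gamma_m$ and $s_m\big(q_\C(z_0)-\eta\,dd^c\phi\big)\ge F\big(z_0,u(z_0)\big)$; writing $q_\C(z_0)=\big(q_\C(z_0)-\eta\,dd^c\phi\big)+\eta\,dd^c\phi$, the super-additivity of $s_m^{1/m}$ on $\Gamma_m$, the inequality $(x+y)^m\ge x^m+y^m$ ($x,y\ge0$, $m\ge1$), and $\phi\le0$ (so that $F(z_0,u(z_0))\ge F(z_0,u_\eta(z_0))=F(z_0,q(z_0))$) give $\sigma_m(q)(z_0)\ge\big(F(z_0,q(z_0))+\eta^m c\big)\beta^n$. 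Hence $u_\eta$ is a bounded viscosity subsolution of $\sigma_m(\cdot)=\big(F(\cdot,\cdot)+\eta^m c\big)\beta^n$ on $\Om$, still with $u_\eta\le v$ on $\partial\Om$.

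\emph{Step 2 (doubling, interior localization, theorem on sums, contradiction).} Suppose, for some $\eta$, that $\delta:=\sup_{\Om}(u_\eta-v)>0$. For $\varepsilon>0$ let $M_\varepsilon:=\sup_{(z,w)\in\bar{\Om}\times\bar{\Om}}\big(u_\eta(z)-v(w)-\tfrac1{2\varepsilon}|z-w|^2\big)$, attained at $(z_\varepsilon,w_\varepsilon)$. By the standard penalization lemma, $\tfrac1{2\varepsilon}|z_\varepsilon-w_\varepsilon|^2\to0$, $M_\varepsilon\to\delta$, and along a subsequence $z_\varepsilon,w_\varepsilon\to\hat z$ with $u_\eta(\hat z)-v(\hat z)=\delta$, $u_\eta(z_\varepsilon)\to u_\eta(\hat z)$ and $v(w_\varepsilon)\to v(\hat z)$; since $u_\eta\le v$ on $\partial\Om$ and $\delta>0$, $\hat z\in\Om$, so $z_\varepsilon,w_\varepsilon\in\Om$ for $\varepsilon$ small. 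The Jensen--Ishii lemma (applied in $\R^{4n}$) provides real symmetric matrices $X,Y$ with $(p_\varepsilon,X)\in\overline{J}^{2,+}u_\eta(z_\varepsilon)$, $(p_\varepsilon,Y)\in\overline{J}^{2,-}v(w_\varepsilon)$, $p_\varepsilon=\varepsilon^{-1}(z_\varepsilon-w_\varepsilon)$, and $X\le Y$, hence $X_\C\le Y_\C$. Since $u_\eta$ is $m$-subharmonic, $X_\C\in\Gamma_m$, and by a routine closure argument the strict subsolution inequality passes to the superjet: $s_m(X_\C)\ge F(z_\varepsilon,u_\eta(z_\varepsilon))+\eta^m c$. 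As $\Gamma_m$ is closed upward, $Y_\C\in\Gamma_m$, so the supersolution inequality passes to the subjet: $s_m(Y_\C)\le F(w_\varepsilon,v(w_\varepsilon))$. Using the monotonicity of $s_m$ on $\Gamma_m$, then $v(w_\varepsilon)\le u_\eta(z_\varepsilon)-M_\varepsilon$ and the monotonicity of $F$ in the last variable,
\[
F(z_\varepsilon,u_\eta(z_\varepsilon))+\eta^m c\ \le\ s_m(X_\C)\ \le\ s_m(Y_\C)\ \le\ F(w_\varepsilon,v(w_\varepsilon))\ \le\ F\big(w_\varepsilon,\,u_\eta(z_\varepsilon)-M_\varepsilon\big).
\]
Letting $\varepsilon\to0$ along the subsequence and using the continuity of $F$ gives $F(\hat z,a)+\eta^m c\le F(\hat z,a-\delta)$ with $a:=u_\eta(\hat z)$; this is impossible because $a-\delta=v(\hat z)\le a$ forces $F(\hat z,a-\delta)\le F(\hat z,a)$ while $\eta^m c>0$. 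Hence $u_\eta\le v$ on $\Om$ for every $\eta\in(0,1)$, and letting $\eta\to0$ yields $u\le v$ on $\Om$.

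The heart of the matter is Step 2: arranging the doubling so that the maximizing points stay in the interior (this is where the boundary hypothesis $u\le v$ on $\partial\Om$ is used), invoking the theorem on sums, and then translating the real Hessian inequality $X\le Y$ on $\R^{2n}$ into the Hermitian inequality $X_\C\le Y_\C$ while keeping the cone membership $X_\C\in\Gamma_m$ so that the monotonicity of $s_m$ can be applied. The degeneracy of $\sigma_m$ together with the merely non-strict monotonicity of $F$ is precisely what makes the perturbation of Step 1 indispensable. All of this is carried out in detail in \cite{Lu13}, following the complex Monge--Amp\`ere case of \cite{EGZ11}.
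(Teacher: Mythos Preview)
The paper does not supply its own proof of this theorem; it merely quotes the statement from \cite{Lu13} and refers the reader to \cite{Lu13}, \cite{EGZ11} and \cite{CIL92} for the details. Your sketch is correct and is precisely the argument carried out in those references: a strict-subsolution perturbation (Step~1) to absorb the degeneracy of $\sigma_m$ and the non-strict monotonicity of $F$, followed by the doubling-of-variables/Jensen--Ishii machinery (Step~2), with the key translation $X\le Y\Rightarrow X_\C\le Y_\C$ and the use of the G{\aa}rding-cone properties of $\Gamma_m$ (upward closedness and monotonicity of $s_m$) so that the supersolution inequality applies to $Y_\C$. There is nothing to compare against in the present paper beyond the citation.
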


 For more details on this theory we refer to \cite{Lu13} and \cite{EGZ11} in the complex case and to \cite{CIL92} for the real case.
 
\subsection{Weak stability estimates}

An important tool in dealing with our problems is the notion of capacity. This was introduced by  Bedford and Taylor in their pionneering  work for the complex Monge-Amp\`ere operator (see \cite{BT82}). 
Let us recall the coresponding notion of capacity we will use here (see \cite{Lu12}, \cite{SA13}). Let $\Omega \Subset \C^n$  be a strongly $m$-pseudoconvex domain.  The $m$-Hessian capacity  is defined as follows. For any compact set $K \subset \Omega$,
$$
 \mathrm{Cap}_m(K,\Omega) := \sup \{\int_K  (dd^c u)^m \wedge \beta^{n - m} ; u \in \mathcal{SH}_m (\Omega) , - 1 \leq u \leq 0\}.
$$

We can extend this capacity as an outer capacity on $\Omega$. Given a  set $S \subset \Omega$, we define the inner capacity of $S$ by the formula
$$
\mathrm{Cap}_m(S,\Omega) := \sup \{\mathrm{Cap}_m(K,\Omega) ; K \, \, \hbox{compact} \, \, K \subset S\}.
$$ 

The outer capacity of $S$ is defined by the formula 
$$
\mathrm{Cap}^*_m(S,\Omega) := \inf \{\mathrm{Cap}_m(U,\Omega) ; U \, \, \hbox{ is open} \, \, U \supset S\}, 
$$ 

It is possible to show that $\mathrm{Cap}^*_m(\cdot,\Omega)$ is a Choquet capacity and then  any Borel set $ B \subset \Omega$ is capacitable and
for any compact set $K \subset \Omega$, 
 \begin{equation} \label{eq:cap}
 \text{Cap}_m(K,\Omega)=\int_{\Omega}(dd^c u_K^*)^m\wedge\beta^{n-m},
 \end{equation}
  where $u_K$ is the relative equilibrium potential of $(K,\Omega)$ defined by the formula :
  
 $$
 u_K:=\sup\{u\in \mathcal{SH}_m(\Omega) \, ; \,  u \, \leq \, -{\bf 1}_K  \, \mathrm{on } \, \, \Omega\},
 $$
 and $u_K^*$ is its upper semi-continuous regularization on $\Omega$ (see \cite{Lu12}).
 
 It is well known that $u_K^*$ is $m$-subharmonic on $\Omega$, $- 1 \leq u_K^* \leq 0$, $u_K^*= - 1$ quasi-everywhere (with respect to $\text{Cap}_m$) on $\Omega$ and $u_K^* \to 0$ as $z \to \partial \Omega$ (see \cite{Lu12}).

We will use the following definition. 
\begin{definition}  \label{def:cap-domination} Let $\mu$ be  a positive Borel measure on $\Omega$ and let $A,\tau > 0$ be positive numbers.  We say that $\mu$ is  dominated by the $m$-Hessian capacity with parameters $(A,\tau)$ if for any compact subset $K \subset \Omega$  with $\mathrm{Cap}_m (K,\Omega) \leq 1$,
\begin{equation} \label{eq:capdomination}
 \mu (K) \leq A \text{Cap}_m (K,\Omega)^{\tau}. 
\end{equation}
 \end{definition}
 Observe that by capacitability, this inequality is then satisfied for any Borel set $K \subset \Omega$.

 Let us mention that S.  Ko\l odziej was the first to relate the domination  of the measure $\mu$ by the Monge-Amp\`ere capacity to the regularity of the solution to complex Monge-Amp\`ere equations (see \cite{Kol96}). 
 
 Using his idea,  Eyssidieux, Guedj and the second author were able to establish in \cite{EGZ09} a weak stability $L^1$-$L^{\infty}$ estimate for bounded  solutions to the Dirichlet problem for the complex Monge-Amp\`ere equation. This result is the main tool in deriving estimates on the modulus of continuity of solutions to  the complex Monge-Amp\`ere and Hessian equations.

The following examples are due to  Dinew and Ko\l odziej (see \cite{DK14}).
\begin{example}
1. Dinew and Kolodziej  proved in \cite{DK14} that the volume measure $\lambda_{2 n}$ is dominated by capacity. Namely for any $1 < r < \frac{m}{n - m}$, there exists a constant $N (r) > 0$ such that for any compact subset $K \subset \Omega$, 
\begin{equation} \label{eq:DK}
\lambda_{2 n} (K) \leq N (r) \mathrm{Cap}_m (K,\Omega)^{1+ r}.
\end{equation}
 Observe that this estimate is sharp in terms of the exponent when $m < n$. This can be seen by taking $\Omega = \B$ the unit ball and  $K := \bar{\B_s} \subset \B$ the closed ball of radius $s \in ]0,1[$, since $\mathrm{Cap}_m(\bar{\B}_s ,\B) \approx  s^{2 (n-m)}$ as $s \to 0$ (see \cite{Lu12}).
 When $m=n$ we know that the domination is much more precise (see  \cite{ACKPZ09}).
 
2. Let $ 0 \leq f \in L^p (\Omega)$ with $p > n \slash m$. Then $ \frac{n (p-1)}{p (n - m)} > 1$. By H\"older inequality and inequality (\ref{eq:DK}) we obtain: for any  $1 < \tau < \frac{n (p-1)}{p (n - m)}$ there exists a constant $M (\tau) > 0$ such that for any compact set $K \subset \Omega$, 
\begin{equation} \label{eq:DK2}
\int_K f d \lambda_{2 n} \leq  M (\tau) \Vert f\Vert_p  \mathrm{Cap}_m (K,\Omega)^{\tau}.
\end{equation}

\end{example}

Theorem A will provide us with many new examples.
 
 The condition (\ref{eq:capdomination}) plays an important role in the following stability result which will be a crucial point in the proof of our theorems (see \cite{EGZ09, GKZ08, Ch16}).

\begin{proposition} \label{prop:stability}  Let $\mu$ be a positive Borel measure on $\Omega$ dominated by the $m$-Hessian capacity  with parameters $(A, \tau)$ such that  $\tau > 1$. 

 Then for  any  $u, v \in \mathcal {SH}_m (\Omega) \cap L^{\infty} (\Omega)$ such that $(dd^c u)^m \wedge \beta^{n - m} \leq \mu$ on $\Omega$ and $\liminf_{\partial \Omega} (u - v) \geq 0$, we have  

\begin{equation} \label{eq:stability}
\sup_{\Omega}  (v - u)_+ \leq 2 \Vert (v-u)_+\Vert_{1,\mu}^{1 \slash (m+1)} + C \|(v-u)_+\|^\gamma_{1,\mu},
\end{equation}
where  $\Vert (v-u)_+ \Vert_{1,\mu} := \int_{\Omega} (v-u)_+ d \mu$ and 
\begin{equation} \label{eq:stbilityconstant}
C := 1 +  \frac{2^{\tau} A^{\frac{1}{m}}}{1-2^{1 -\tau}}, \,  \gamma = \gamma (\tau,m):= \frac{\tau - 1}{\tau (m + 1) - m }\cdot 
\end{equation}
\end{proposition}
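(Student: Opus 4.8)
The plan is to adapt the classical Ko\l odziej-type $L^1$--$L^\infty$ stability argument, carried out for the complex Monge--Amp\`ere equation in \cite{EGZ09, GKZ08} and for Hessian operators in \cite{Ch16}. Set $M := \sup_\Omega (v-u)_+$ and $b := \Vert (v-u)_+\Vert_{1,\mu}$, and assume $M > 0$ (otherwise there is nothing to prove). For $s > 0$ I introduce the sublevel sets $U(s) := \{z \in \Omega : u(z) < v(z) - s\}$. Since $(v-u)_+ \leq M$ on $\Omega$ these are empty for $s \geq M$, and since $\liminf_{z \to \partial\Omega}(u-v) \geq 0$ each $U(s)$ with $s > 0$ is relatively compact in $\Omega$; consequently $h(s) := \mathrm{Cap}_m(U(s),\Omega)$ is non-increasing on $(0,M)$ and strictly positive there, a non-empty open set having positive $m$-Hessian capacity. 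The goal is to show that $h(s) = 0$ for some $s \leq 2 b^{1/(m+1)} + C b^\gamma$; this forces $v - u \leq s$ on $\Omega$, hence $M \leq 2 b^{1/(m+1)} + C b^\gamma$, which is (\ref{eq:stability}).

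\smallskip

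First I would establish a comparison inequality. Fix $0 < t < s < M$ and $w \in \mathcal{SH}_m(\Omega)$ with $-1 \leq w \leq 0$, and set $\varphi := v - s + t(w+1)$. Since $0 \leq w + 1 \leq 1$ one has the inclusions $U(s) \subset \{u < \varphi\} \subset U(s-t)$, while $u - \varphi = (u - v) + (s-t) - t w \geq (u-v) + (s-t)$, so that $\liminf_{z\to\partial\Omega}(u - \varphi) \geq s - t > 0$. The comparison principle (Proposition \ref{prop:Comparison Principle}) then applies and yields, using $(dd^c u)^m\wedge\beta^{n-m} \leq \mu$,
$$\int_{\{u<\varphi\}}(dd^c\varphi)^m\wedge\beta^{n-m} \leq \int_{\{u<\varphi\}}(dd^c u)^m\wedge\beta^{n-m} \leq \mu(\{u<\varphi\}) \leq \mu(U(s-t)).$$
Expanding $(dd^c\varphi)^m\wedge\beta^{n-m} = (dd^c v + t\, dd^c w)^m\wedge\beta^{n-m}$ and retaining only the top term of the binomial (the remaining terms being non-negative currents) gives $(dd^c\varphi)^m\wedge\beta^{n-m} \geq t^m (dd^c w)^m\wedge\beta^{n-m}$; restricting the left-hand integral to $U(s)$ and taking the supremum over all admissible $w$ produces
$$t^m\, \mathrm{Cap}_m(U(s),\Omega) \leq \mu(U(s-t)), \qquad 0 < t < s < M. \qquad(\star)$$

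\smallskip

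The second step uses the two available bounds for the right-hand side of $(\star)$. On $U(s-t)$ one has $v - u > s - t$, whence Chebyshev gives $\mu(U(s-t)) \leq b/(s-t)$; inserting this into $(\star)$ and optimizing the splitting $s = (s-t) + t$ (the maximum of $t \mapsto t^m(s-t)$ being attained at $t = ms/(m+1)$) yields $\mathrm{Cap}_m(U(s),\Omega) \leq c_m\, b\, s^{-(m+1)}$ for a dimensional constant $c_m$, hence $\mathrm{Cap}_m(U(s),\Omega) \leq 1$ as soon as $s \geq 2 b^{1/(m+1)}$ --- the source of the first term of (\ref{eq:stability}). As soon as $\mathrm{Cap}_m(U(s-t),\Omega) \leq 1$, the hypothesis that $\mu$ is dominated by the $m$-Hessian capacity with parameters $(A,\tau)$ turns $(\star)$ into $t^m h(s) \leq A\, h(s-t)^\tau$, equivalently $t\, g(s) \leq A^{1/m} g(s-t)^\tau$ with $g := h^{1/m}$, valid for $2 b^{1/(m+1)} \leq s-t < s < M$. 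Feeding this into the standard De Giorgi--Ko\l odziej iteration --- starting at $s_0 := 2 b^{1/(m+1)}$ and building $s_{j+1} = s_j + t_j$ with geometrically decreasing increments $t_j = 2^{-j} t_0$, $t_0$ comparable to $A^{1/m} g(s_0)^{\tau-1}$ --- forces $g(s_j) \to 0$ while $\sum_j t_j$ converges; the ratio $2^{1-\tau}$ of the geometric series and the per-step weight $2^\tau$ yield the constant $C = 1 + \frac{2^\tau A^{1/m}}{1-2^{1-\tau}}$, and, combined with $g(s_0)^{\tau-1} = \mathrm{Cap}_m(U(s_0),\Omega)^{(\tau-1)/m} \leq (c_m\, b\, s_0^{-(m+1)})^{(\tau-1)/m}$ at $s_0 = 2 b^{1/(m+1)}$, this bounds $\sum_j t_j$ by $C b^{\gamma}$ for a positive exponent $\gamma$. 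Setting $s_\infty := \lim_j s_j$, monotonicity of $h$ gives $\mathrm{Cap}_m(U(s_\infty),\Omega) = \lim_j h(s_j) = 0$, hence $U(s_\infty) = \emptyset$ and $M \leq s_\infty \leq 2 b^{1/(m+1)} + C b^\gamma$.

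\smallskip

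The step requiring genuine care --- the main obstacle for a clean write-up --- is the bookkeeping in this last iteration: one must tune the geometric decay rate of the increments $t_j$ and the precise starting level so as to come out with exactly the stated constant $C$ and exponent $\gamma = \frac{\tau-1}{\tau(m+1)-m}$, rather than merely some admissible $C', \gamma'$; the powers of $2$ and the factor $(1-2^{1-\tau})^{-1}$ are forced by the exponent $m$ in $(\star)$, and the computation proceeds exactly as in \cite{EGZ09, GKZ08, Ch16}. A secondary point to check is that the comparison principle is legitimately applied up to $\partial\Omega$ in the first step; this is guaranteed by the hypothesis $\liminf_{\partial\Omega}(u-v) \geq 0$ together with the relative compactness of the sublevel sets $U(s)$, $s > 0$.
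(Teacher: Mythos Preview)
Your overall architecture is the same as the paper's --- the comparison inequality $(\star)$, the Chebyshev bound giving $\mathrm{Cap}_m(U(s),\Omega)\leq 1$ for $s\geq s_0:=2b^{1/(m+1)}$, and the De~Giorgi--Ko\l odziej iteration are exactly the ingredients used there. There is, however, a genuine gap in the last step, not just bookkeeping.

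You claim that starting the iteration at $s_0=2b^{1/(m+1)}$ and using
\[
g(s_0)^{\tau-1}\leq\bigl(c_m\,b\,s_0^{-(m+1)}\bigr)^{(\tau-1)/m}
\]
bounds $\sum_j t_j$ by $Cb^\gamma$. But substituting $s_0=2b^{1/(m+1)}$ gives $b\,s_0^{-(m+1)}=2^{-(m+1)}$, a constant \emph{independent of $b$}. So your iteration length is only bounded by a constant times $A^{1/m}/(1-2^{1-\tau})$, and you obtain $M\leq 2b^{1/(m+1)}+\text{const}$, which is true but does not yield (\ref{eq:stability}): the second term must vanish as $b\to 0$, and with your choice of starting level it does not.

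The paper repairs this by introducing a further free parameter $\varepsilon>0$: one shifts the iteration to start at $s_0+\varepsilon$ (so the domination hypothesis is still available since the capacity there is $\leq 1$), but then controls the \emph{initial} value $g(s_0+\varepsilon)$ not by the crude bound $\leq 1$ but by the finer inclusion $U(s_0+\varepsilon)\subset U(\varepsilon)$ together with the Chebyshev-type estimate $\mathrm{Cap}_m(U(\varepsilon),\Omega)\lesssim \varepsilon^{-(m+1)}b$. This yields
\[
M\;\leq\; s_0+\varepsilon + \frac{2A^{1/m}}{1-2^{1-\tau}}\,\bigl(\varepsilon^{-(m+1)}b\bigr)^{(\tau-1)/m},
\]
and optimizing in $\varepsilon$ produces the exponent $\gamma$ and the constant $C$ of the statement. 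The point is that the two roles of the threshold --- making the capacity $\leq 1$ so that the domination applies, and making the initial capacity small so that the iteration terminates quickly --- are decoupled; your write-up collapses them into the single choice $s_0$, which cannot do both jobs. This is the substantive missing idea, and once you insert the extra $\varepsilon$ the rest of your outline goes through verbatim.
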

 Observe that the most relevant case in applications is when $\Vert (v-u)_+ \Vert_{1,\mu}$ is small. So the right exponent is $\gamma < 1 \slash (m +1)$.
 \begin{proof}
 The proof uses an idea which goes back to Ko\l odziej (\cite{Kol96}) with some simplifications due to Guedj, Eyssidieux and the second author (see \cite{EGZ09, GKZ08}). It  relies on the following estimates : for any $t > 0, s > 0$

 \begin{equation}\label{eq:Cap-MA}
 t^m {Cap}_m(\{u< v -s-t\},\Omega)\leq \int_{\{u<v -s\}}(dd^cu)^m\wedge\beta^{n-m}.
\end{equation}

Indeed let $t > 0, s > 0$ fixed and $ w \in  \mathcal{SH}_m(\Omega)$ be given such that $-1 \leq w \leq0$. Then 
 $$\{u-v<-s-t\}\subset\{u-v< t w -s\} \subset\{u-v<-s\} \Subset\Omega.$$
 
 It follows that 
\begin{eqnarray*}
 t^m \int_{\{u-v<-s-t\}}(dd^c w)^m\wedge\beta^{n-m}&\leq&\int_{\{u< v - s-t\}}(dd^c(v+tw))^m\wedge\beta^{n-m}\\
 &\leq& \int_{\{u< v+ tw - s\}}(dd^c(v+tw))^m\wedge\beta^{n-m}.
 \end{eqnarray*}
On the other hand the comparison principle yields
\begin{eqnarray*}
 \int_{\{u< v+ tw - s\}}(dd^c(v+tw))^m\wedge\beta^{n-m} &\leq&\int_{\{u < v +t w - s\}}(dd^c u)^m\wedge\beta^{n-m}\\
 &\leq&\int_{\{u< v -s\}}(dd^cu)^m\wedge\beta^{n-m}.
\end{eqnarray*}
The last two inequalities imply  (\ref{eq:Cap-MA}).

Applying inequality (\ref{eq:Cap-MA}) with the parameter $(s \slash 2,  s\slash 2)$ instead of $(t,s)$ and taking into acount that $u$ is a supersolution, we obtain

\begin{eqnarray} 
\nonumber \mathrm{Cap}_m(\{u< v - s \},\Omega) & \leq & 2^m s^{-m} \int_{\{u<v - s\slash 2\}}(dd^cu)^m\wedge\beta^{n-m} \\
& \leq & 2^{m + 1} s^{-m - 1} \int_\Omega (v-u)_+ d \mu. 
\end{eqnarray}
Set $s_0 :=  2 \Vert (v-u)_+\Vert_{1,\mu}^{1 \slash (m+1)}$.  Then for any $s \geq s_0$, 

 \begin{equation} \label{eq:smallcap}
\mathrm{Cap}_m(\{u< v - s \},\Omega)\leq 1.
 \end{equation}

Fix $\varepsilon > 0$ and $s \geq 0$. Then applying inequality (\ref{eq:Cap-MA}) with $s_0 + s + \varepsilon$ instead of $s$ and taking into account the fact that   $(dd^c u)^m \wedge \beta^{n - m} \leq \mu$ weakly on $\Omega$,  we get
\begin{equation} \label{eq:cap-mu}
t^m {Cap}_m(\{u< v  - s_0 - \varepsilon - s -t\},\Omega)\leq \int_{\{u<v -s_0  - \varepsilon - s\}} d \mu.
\end{equation}
Set $ f (s) = f_\varepsilon (s):={Cap}_m(\{u-v<-s - s_0 -\varepsilon\},\Omega)^{\frac{1}{m}}$.  By  (\ref{eq:smallcap}), we  have $f (s) \leq 1$. Hence since $\mu$ is dominated by capacity,  it follows that for any $t > 0$ and $s > 0$,

 $$
 t f (s + t) \leq A^{\frac{1}{m}} f (t)^{1 + a}, \, \, \hbox{where} \, \, a := \tau - 1 > 0.
 $$ 
 
It follows from \cite[Lemma 2.4]{EGZ09}) that $f (s) = 0$ for any $s\geq S_\infty$ where 
 $$
 S_\infty:=\frac{2 A^{\frac{1}{m}} }{1-2^{-a}}  [f (0)]^{a},
 $$
 Thus $v-u\leq s_0 + \varepsilon + S_\infty$ quasi everywhere on $\Omega$ and then the inequality holds everywhere on $\Omega$ i.e.
 $$
 \max (v-u)_+ \leq s_ 0 + \varepsilon + \frac{2 A^{\frac{1}{m}}}{1-2^{-a}} {Cap}_m(\{v - u >\varepsilon\},\Omega)^{a}
 $$
 Applying  (\ref{eq:Cap-MA}) with $t=\varepsilon$ and $s = 0$ we obtain
 $$
 {Cap}_m(\{v - u > \varepsilon\},\Omega)\leq 2 \varepsilon^{-m-1}\|(v-u)_+\|_{1,\mu}.
 $$
 
 As a consequence of the previous estimate, we obtain
 $$
 \sup_\Omega(v-u)\leq 2 \Vert (v-u)_+\Vert_{1,\mu}^{1 \slash (m+1)} +  \varepsilon+ C' \varepsilon^{-a (m+1)}\|(v-u)_+\|^{a}_{1,\mu},
 $$
 where $C' :=  \frac{2^{a + 1} A^{\frac{1}{m}}}{1-2^{-a}}$.
 Set  $\varepsilon :=\|(v-u)_+\|^\gamma_{1,\mu}$, with $\gamma:=\frac{a}{1 +a(m+1)} = \frac{\tau - 1}{(\tau - 1) (m + 1) + 1}.$ Then
 $$
\sup_{\Omega}(v-u)_+\leq 2 \Vert (v-u)_+\Vert_{1,\mu}^{1 \slash (m+1)} + C \|(v-u)_+\|^\gamma_{1,\mu},
 $$
 where $C := C' + 1 = 1 + \frac{2^{a + 1} A^{\frac{1}{m}}}{1-2^{-a}} = 1 +  \frac{2^{\tau} A^{\frac{1}{m}}}{1-2^{1 -\tau}}$.
\end{proof}

\section{Subharmonic envelopes and obstacle problems } 
 Here we prove some results that will be used in the proof of the Theorem A. Since they are  of independent interest, we will state them in the most general form and give complete proofs. 

\subsection{Subharmonic envelopes } 

Let $\Omega \Subset \C^n$ and  $h : \Omega \longrightarrow \R$ is a non positive bounded Borel function and  define the corresponding projection:
\begin{equation} \label{eq:subextension}
\tilde h = P_{m,\Omega}  (h) := \left(\sup \{v \in \mathcal {SH}_m (\Omega) ;  \,  v \leq h \, \text{in} \, \, \Omega\}\right)^*. 
\end{equation}
Observe that we do not need to take the upper semi-continuous regularization if $h$ is upper semi-continuous on $\Omega$. On the other hand, we can easily see that
$$
 P_{m,\Omega}  (h) := \sup \{v \in \mathcal {SH}_m (\Omega) ;  \,  v \leq h \, \text{quasi everywhere on} \, \, \Omega\}, 
$$
where $v \leq h$ quasi everywhere on $\Omega$ means that the exceptional set where $v \geq h$ has zero  $Cap_m$-capacity.

This is a classical construction in Potential Theory and has been considered in Complex Analysis  first by H. Bremermann in \cite{Brem59},  J.B. Walsh in \cite{Wal69} and also by J. Siciak in \cite{Sic81}. Later it has been studied by Bedford and Taylor when solving the Dirichlet problem for the  the complex Monge-Amp\`ere equation (\cite{BT76}, \cite{BT82}. In the setting of compact K\"ahler manifolds it has bee considered R. Berman and J.-P. Demailly  in \cite{BD12} and later in \cite{Ber19}. It has been also considered recently in \cite{GLZ19} in connexion with the supersolution problem for complex Monge-Amp\`ere equations,  where a precise estimate of its complex Monge-Amp\`ere measure was given. 

We will extend these last results to Hessian equations.

 \begin{lemma}  \label{lem:projection} Let  $\Omega \Subset \C^n$ be a bounded strongly $m$-pseudoconvex domain and $h$  a bounded lower semi-continuous function on $\Omega$. Then the function  $\tilde h :=  P_{m,\Omega}  (h)$  satisfies the following properties:

$(i)$ $\tilde h  \in \mathcal {SH}_m (\Omega) \cap L^{\infty} (\Omega)$,  and $\tilde h \leq h$ a.e. on $\Omega$;

$(ii)$ if $h$ is continuous on $\bar \Omega$, then $\tilde h$ is continuous on $\bar{\Omega}$ 
 and satisfies the following properties
\begin{equation} \label{eq:boundaryvaluesh}
\lim_{\Omega \ni z \to \zeta} \tilde h (z) = h (\zeta), \, \, \zeta \in \partial \Omega,
\end{equation} 

$(iii)$ $ \int_\Omega (\tilde h - h) (dd^c \tilde h)^m \wedge \beta^{n - m}   = 0$.

\end{lemma}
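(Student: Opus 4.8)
The statement is the standard "comparison/obstacle" package for the $m$-subharmonic envelope $\tilde h = P_{m,\Om}(h)$, and I would prove the three items essentially in the order $(i)\Rightarrow(ii)\Rightarrow(iii)$, using the strong $m$-pseudoconvexity of $\Om$ crucially in (ii) and the viscosity comparison principle (Theorem \ref{thm: viscosity comparison principle}) together with Proposition \ref{prop: viscosity vs potential general case} in (iii).

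\emph{Item (i).} First I would note the family $\mathcal F_h := \{v \in \mathcal{SH}_m(\Om) : v \le h \text{ on } \Om\}$ is nonempty and uniformly bounded below: since $h$ is bounded below, say $h \ge -M$, and $\Om$ is strongly $m$-pseudoconvex with defining function $\rho$ satisfying $(dd^c\rho)^k\wedge\beta^{n-k}\ge\beta^n$, the function $C\rho - M$ lies in $\mathcal F_h$ for $C$ large (it is $m$-subharmonic, $\le -M \le h$, and $\ge -M-C\sup_\Om|\rho|$ is bounded). Hence $\tilde h$ is well defined, bounded, and by Choquet's lemma the usc regularization $\tilde h^*$ is $m$-subharmonic; standard negligibility (the set $\{\tilde h < \tilde h^*\}$ has zero $Cap_m$) plus lower semicontinuity of $h$ gives $\tilde h^* \le h$ quasi-everywhere, hence a.e., so $\tilde h^* \in \mathcal F_h$, i.e. $\tilde h = \tilde h^*$ is $m$-subharmonic and $\tilde h \le h$ a.e. This is routine.

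\emph{Item (ii).} Assume $h \in \mathcal C^0(\Omb)$. The boundary behaviour \eqref{eq:boundaryvaluesh} follows by sandwiching: the barrier $C\rho + h(\zeta) - \epsilon$ (using uniform continuity of $h$ and $\rho<0$ inside, $=0$ on $\partial\Om$) lies in $\mathcal F_h$ near $\zeta$ after a standard gluing with $C\rho - \|h\|_\infty$ via Proposition \ref{prop:basic}(6), giving $\liminf_{z\to\zeta}\tilde h(z) \ge h(\zeta)$; the reverse inequality $\limsup \le h(\zeta)$ is immediate from $\tilde h \le h$ upper-semicontinuously extended. Continuity on $\Om$ itself I would get by the classical translation-and-regularization argument of Bedford--Taylor: for small vectors $\tau$, compare $\tilde h$ with $(\tilde h)_\tau + \omega(|\tau|)$ where $\omega$ is the modulus of continuity of $h$ on $\Omb$, correcting near $\partial\Om$ using the barrier just constructed, to conclude $\tilde h$ has modulus of continuity controlled by $\omega$ up to the boundary; combined with \eqref{eq:boundaryvaluesh} this yields $\tilde h \in \mathcal C^0(\Omb)$.

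\emph{Item (iii).} This is the crux. I would show $\sigma_m(\tilde h)$ is carried by the contact set $\{\tilde h = h\}$. Let $z_0 \in \Om$ with $\tilde h(z_0) < h(z_0)$; I want to show $\tilde h$ is "maximal" near $z_0$ in the sense $\sigma_m(\tilde h)_+ \le 0$ there in the viscosity sense, i.e. $\tilde h$ is a viscosity supersolution of $\sigma_m(u) = 0$ on the open set $U := \{\tilde h < h\}$ (using continuity of both functions from (ii), $U$ is open). Indeed if $q \in \mathcal C^2$ touches $\tilde h$ from below at $z_0 \in U$ and $(dd^c q(z_0))^m\wedge\beta^{n-m} > 0$ with $dd^c q(z_0)$ $m$-positive, then one can bump $q$ up slightly on a small ball while staying below $h$ (since $\tilde h < h$ near $z_0$) to produce a competitor in $\mathcal F_h$ strictly above $\tilde h$ at $z_0$, contradiction. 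Hence $\tilde h$ is a viscosity supersolution of $\sigma_m(u)=0$ on $U$, and it is trivially a viscosity subsolution there (it is $m$-subharmonic, Proposition \ref{prop: viscosity vs potential general case}), so by the viscosity comparison principle and Proposition \ref{prop: viscosity vs potential general case} again, $(dd^c\tilde h)^m\wedge\beta^{n-m} = 0$ on $U$ in the potential sense. Therefore $(\tilde h - h)\,(dd^c\tilde h)^m\wedge\beta^{n-m}$ vanishes: it is zero on $U=\{\tilde h<h\}$ because the measure is zero there, and zero on $\Om\setminus U = \{\tilde h = h\}$ because the density $\tilde h - h$ is zero there (this set-theoretic splitting is legitimate since $\tilde h - h$ is a bounded Borel function and the Hessian measure is a genuine positive Borel measure). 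Integrating gives $\int_\Om(\tilde h - h)\sigma_m(\tilde h) = 0$.

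\emph{Main obstacle.} The delicate point is (iii): rigorously passing between the viscosity notion of "supersolution of $\sigma_m = 0$ on $U$" and the potential-theoretic "$(dd^c\tilde h)^m\wedge\beta^{n-m}=0$ on $U$", which is exactly where Proposition \ref{prop: viscosity vs potential general case} and Theorem \ref{thm: viscosity comparison principle} must be invoked with care (one compares $\tilde h$ on a small ball $B\Subset U$ with the Perron--Bremermann solution of the homogeneous equation with boundary data $\tilde h|_{\partial B}$, which exists and is continuous, to force equality). The bumping construction producing a competitor strictly above $\tilde h$ also needs the $m$-positivity bookkeeping to be done honestly, but that is standard once the contact-set picture is set up.
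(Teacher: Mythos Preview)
Your proof is correct and for (i) and (ii) follows essentially the same route as the paper: (i) is dispatched by Choquet's lemma and negligibility of the exceptional set (the paper simply cites general theory), and (ii) is Walsh's barrier-plus-translation argument, the paper using the slightly cleaner global barrier $A\rho + h'$ with $h'$ a $C^2$ approximation of $h$ rather than your local barrier $C\rho + h(\zeta) - \varepsilon$ glued to a global subsolution.

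The only substantive difference is in (iii). The paper says in one line that it ``follows by a standard balayage argument going back to Bedford and Taylor'', i.e.\ on any ball $B\Subset\{\tilde h<h\}$ one replaces $\tilde h$ by the continuous solution of the homogeneous Dirichlet problem with boundary data $\tilde h|_{\partial B}$, observes the modification still lies in $\mathcal F_h$ (for $B$ small enough that the oscillation of $\tilde h$ on $\bar B$ is smaller than $\inf_{\bar B}(h-\tilde h)$), hence equals $\tilde h$, and concludes $(dd^c\tilde h)^m\wedge\beta^{n-m}=0$ on $B$. You instead first prove that $\tilde h$ is a viscosity supersolution of $\sigma_m=0$ on $U=\{\tilde h<h\}$ via a bumping argument, and then---as you correctly identify in your ``Main obstacle'' paragraph---pass to the potential-theoretic vanishing by comparing with the Perron--Bremermann solution on a small ball. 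That last step \emph{is} the balayage argument, so your viscosity layer is a detour rather than a genuinely different mechanism; it is not wrong, but it is not needed either, and the direct balayage route avoids the bookkeeping of strict $m$-positivity in the bump. One small point: you invoke (ii) to get $U$ open, but (iii) is stated for $h$ merely bounded l.s.c., and $U=\{\tilde h - h <0\}$ is already open since $\tilde h$ is u.s.c.\ and $-h$ is u.s.c.; the balayage argument then goes through without continuity of $\tilde h$, using that the Hessian measure of a bounded $m$-sh function does not charge $m$-polar sets to handle the exceptional set where $\tilde h>h$.
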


\begin{proof}
Observe that    $\min_{\bar \Omega} h \leq \tilde h \leq \max_{\bar \Omega} h$ on $\Omega$.  
1. Property $(i)$ follows from the general theory (see \cite{Lu12}). 

2. Property $(ii)$ can be proved using the perturbation method due to  J.B. Walsh (see \cite{Wal69}). Let us recall the argument for completeness. 

We first prove that $\tilde  h$ satisfies (\ref{eq:boundaryvaluesh}) meaning that it has boundary values equal to $h$ and then it extends as a function on $\bar \Omega$ which is continuous on $\partial \Omega$. Indeed fix $\varepsilon > 0$ and  let $h'$ be a $C^2$ approximating function on $\bar \Omega$ such that $h - \varepsilon \leq h' \leq h $  on $\bar \Omega$.  Let $\rho$ be the strongly $m$-subharmonic defining function for $\Omega$. Then there exists a constant $A > 0$ such that $u := A \rho + h'$ is $m$-subharmonic on $\Omega$ and $u \leq h' \leq h$ on $\bar \Omega$. Then by definition of the envelope,  we have $u \leq \tilde h \leq h$ on $\bar \Omega$. Therefore for any $\zeta \in \partial \Omega$, 
\begin{eqnarray*}
h (\zeta) - \varepsilon \leq  h' (\zeta) & = & \lim_{\Omega \ni z \to \zeta } u (z)   \\
& \leq & \liminf_{\Omega \ni z \to \zeta } \tilde h (z) \leq  \limsup_{\Omega \ni z \to \zeta } \tilde h (z)  \leq  h (\zeta).
\end{eqnarray*}
Since $\varepsilon > 0$ is arbitrary, we obtain the identity (\ref{eq:boundaryvaluesh}).
We can then extend $\tilde h$ to $\bar \Omega$ by setting $\tilde h (\zeta) = h (\zeta)$ for $\zeta \in \partial \Omega$.  To prove the continuity of $\tilde h$ on $\bar \Omega$, we use the perturbation argument of J.B. Walsh. Fix  $\delta > 0$ small enough, $a \in \C^n$ such that $\vert a \vert \leq \delta$ and set $\Omega_a := (- a) + \Omega$.

We define the  modulus of continuity of $\tilde{h}$ near the boundary as follows:
$$
\tilde{\kappa}_{\tilde h} (\delta) := \sup \{ \vert \tilde h(z) - \tilde{h} (\zeta)\vert  \, ; \, z \in \Omega, \zeta \in \partial \Omega, \vert z - \zeta\vert \leq \delta. 
$$
Then since $\tilde h = h$ is uniformly continuous on $\partial \Omega$, we see that $\lim_{\delta \to 0^+} \tilde{\kappa}_{\tilde h} (\delta) = 0$. By definition of  $\tilde{\kappa}_{\tilde h}$,  for any $z \in  \Omega \cap \partial \Omega_a$, we have  
$$\tilde{ h} (z + a)  \leq \tilde h (z) +  \tilde{\kappa}_{\tilde{h}} (\delta) \leq \tilde h (z) +  \tilde{\kappa}_{\tilde{h}} (\delta)  + \kappa_h (\delta),
$$ 
where $ \kappa_h (\delta)$ is the modulus of continuity of $h$ on $\bar{\Omega}$.

Therefore by the gluing principle, the function defined by 

$$
 v (z):=\left\{\begin{array}{lcl}
\max \{ \tilde h (z) , \tilde h (z + a) - \tilde{\kappa}_{\tilde{h}} (\delta) - \kappa_h (\delta)\} \, &\hbox{ if}\ z \in\Omega \cap \Omega_a \\
 \tilde h (z)  &\hbox{if}\  z \in\Omega\setminus\Omega_a\\
\end{array}\right.
$$
is $m$-subharmonic on $\Omega$ and satisfies $v \leq h$ on $\bar \Omega$. Therefore $v \leq \tilde h$ on $\bar \Omega$ and then 
$$
\tilde h (z + a) - \tilde{\kappa}_{\tilde{h}} (\delta) -  \kappa_h (\delta) \leq \tilde h (z),
$$
 for any $z \in \Omega \cap \Omega_a$ with $\vert a\vert \leq \delta$. This proves that $\tilde{h}$ is uniformly continuous on $\bar{\Omega}$.

3. Property $ (iii)$  follows by a standard balayage argument in Potential Theory which goes back to Bedford and Taylor for the complex Monge-Amp\`ere equation (\cite{BT76}, \cite{BT82}, see also \cite{GLZ19}).
  \end{proof}
  
 \begin{remark}
 The proof above does not give any information on the modulus of continuity of $\tilde h$ in terms of the modulus of continuity of $h$. In other words we do not know if $\tilde{\kappa}_{\tilde h}$ is comparable to $\kappa_h$.
 
 However if $h$ is $C^2$-smooth on $\bar \Omega$, the function $u := A \rho + h$, considered in the proof above with $h'=h$,  is $m$-subharmonic on $\Omega$, Lipschitz on $\bar{\Omega}$ and safisfies $u \leq \tilde{h} \leq h$ on $\bar{\Omega}$.
 Then this implies that $\tilde{\kappa}_{\tilde{h}} (\delta)  \leq  \kappa_h (\delta) + \kappa_u (\delta) \leq C \kappa_h (\delta)$, where $C > 0$ is a uniform constant. 
 Therefore the modulus of continuity of $\tilde h$ satisfies the inequality $\kappa_{\tilde h}(\delta) \leq C' \kappa_h(\delta)$,
 where $C' > 0$ is an absolute constant.

 This information is not needed here, but it is worth mentioning that this an interesting open problem related to the regularity of solutions to obstacle problems.
 We will come back to this in a subsequent work.
 
 \end{remark}
 \subsection{An obstacle problem}

 \begin{theorem} \label{thm:obstacle}
Let $h \in \mathcal C^2 ({\bar{\Omega}})$. Then $\tilde h := P_{m,\Omega} h  \in \mathcal{SH}_m (\Omega) \cap C^0 (\bar \Omega)$ and its   $m$-Hessian measure  satisfies the following inequality :
\begin{equation} \label{eq:BerIneq1}
 (dd^c \tilde h)^m \wedge \beta^{n - m}  \leq {\bf 1}_{\{   \tilde h = h\}}  \sigma_m^+ (h),
\end{equation}
in the sense of currents on $\Omega$.
 \end{theorem}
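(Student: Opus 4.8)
The plan is to establish the inequality \eqref{eq:BerIneq1} in two parts: first, that $(dd^c \tilde h)^m \wedge \beta^{n-m}$ is supported on the contact set $\{\tilde h = h\}$, and second, that on that set it is dominated by $\sigma_m^+(h)$. The first part is exactly property $(iii)$ of Lemma~\ref{lem:projection}: since $\tilde h \leq h$ on $\Omega$ and $\int_\Omega (\tilde h - h)(dd^c \tilde h)^m \wedge \beta^{n-m} = 0$ with a nonpositive integrand, the measure $(dd^c \tilde h)^m \wedge \beta^{n-m}$ must vanish on $\{\tilde h < h\}$, hence is carried by the closed-in-$\Omega$ set $\{\tilde h = h\}$. (Here I use that $\tilde h$ is continuous on $\bar\Omega$ by Theorem/Lemma~\ref{lem:projection}$(ii)$, so $\{\tilde h = h\}$ is well-behaved.) This reduces the problem to proving $(dd^c \tilde h)^m \wedge \beta^{n-m} \leq \sigma_m^+(h)$ as measures on $\Omega$ without the indicator.

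For the domination, I would use the viscosity machinery of Section~2.5. The key point is a comparison-of-Hessians statement at contact points: if $z_0 \in \Omega$ and $q$ is a $\mathcal C^2$ function touching $\tilde h$ from above at $z_0$, then — because $\tilde h \leq h$ and $\tilde h(z_0) \leq h(z_0)$ — one expects a test function argument forcing $(dd^c q)^m \wedge \beta^{n-m} \leq \sigma_m^+(h)$ at $z_0$. More precisely, the strategy is: fix a constant $c > 0$ and consider the continuous function $F(z) := \sigma_m^+(h)(z) + c$ (a strictly positive continuous density times $\beta^n$), solve (by Theorem~\ref{thm:boundedsubsolution}, applied to $\mu = F\beta^n$) the Dirichlet problem for $(dd^c w)^m \wedge \beta^{n-m} = F \beta^n$ with boundary data $h$ on $\partial\Omega$, and observe that $w$ is a viscosity supersolution of $\sigma_m(u) = F(\cdot)\beta^n$ while, crucially, $h$ itself is \emph{not} in general a subsolution — so this naive route needs adjustment. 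The cleaner approach, following \cite{GLZ19}, is to show directly that $\tilde h$ is a viscosity subsolution of $\sigma_m(u) \geq \sigma_m^+(h)\,\beta^n$: given an upper test function $q$ for $\tilde h$ at $z_0$ with $\tilde h(z_0) = h(z_0)$ (contact forces this, since on $\{\tilde h < h\}$ the measure is zero and there is nothing to prove), the difference $q - h$ has a local minimum... no — rather $h - q \geq h - \tilde h \geq 0$ near $z_0$ with equality at $z_0$, so $h - q$ has a local minimum at $z_0$, giving $dd^c(h-q)(z_0) \geq 0$, i.e. $dd^c q(z_0) \leq dd^c h(z_0)$ as Hermitian forms; combined with $m$-positivity of $dd^c q(z_0)$ (since $q$ touches the $m$-subharmonic function $\tilde h$ from above at an interior point where $\tilde h$ is, by Proposition~\ref{prop: viscosity vs potential general case}, a viscosity subsolution of the trivial inequality $\sigma_m \geq 0$), monotonicity of $s_m$ on the $m$-positive cone yields $(dd^c q)^m \wedge \beta^{n-m} \leq (dd^c h)^m \wedge \beta^{n-m} \leq \sigma_m^+(h)$ at $z_0$. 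Hence $\tilde h$ is a viscosity subsolution of $\sigma_m(u) \geq \sigma_m^+(h)\beta^n$, and by Proposition~\ref{prop: viscosity vs potential general case} this holds in the potential sense: $(dd^c \tilde h)^m \wedge \beta^{n-m} \geq \sigma_m^+(h)\,\beta^n$ — wait, the inequality goes the wrong way for my purpose.

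Let me reorganize: the correct direction is to treat $\tilde h$ as a viscosity \emph{supersolution}. If $q$ touches $\tilde h$ from \emph{below} at $z_0$, then away from the contact set $\{\tilde h = h\}$ the function $\tilde h$ is $m$-harmonic (maximal) by the balayage property underlying Lemma~\ref{lem:projection}$(iii)$, so there $[(dd^c q)^m\wedge\beta^{n-m}]_+ \leq 0 \cdot \beta^n \leq \sigma_m^+(h)\beta^n$ trivially if $\sigma_m^+(h) \geq 0$; at contact points $z_0$ with $\tilde h(z_0) = h(z_0)$, from $q \leq \tilde h \leq h$ with equality at $z_0$ we get $h - q \geq 0$ near $z_0$ with a zero at $z_0$, hence $dd^c q(z_0) \leq dd^c h(z_0)$, and so $[(dd^c q)^m\wedge\beta^{n-m}]_+ \leq \sigma_m^+(h)\beta^n$ at $z_0$. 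Thus $\tilde h$ is a viscosity supersolution of $\sigma_m(u) = \sigma_m^+(h)\beta^n$; since $\tilde h$ is moreover $m$-subharmonic and continuous, a standard argument (approximate $\sigma_m^+(h)$ from above by continuous $F_c \searrow$, apply the viscosity characterization of the Hessian measure, cf. \cite{Lu13, EGZ11}) upgrades the viscosity supersolution property to the potential inequality $(dd^c \tilde h)^m \wedge \beta^{n-m} \leq \sigma_m^+(h)\,\beta^n$ on $\Omega$. Combining with the support statement from the first paragraph gives \eqref{eq:BerIneq1}. \textbf{The main obstacle} I anticipate is the last upgrade — passing rigorously from the pointwise viscosity-supersolution inequality to the global comparison of measures $(dd^c\tilde h)^m\wedge\beta^{n-m} \leq \sigma_m^+(h)\beta^n$ — which requires either a local viscosity comparison principle (Theorem~\ref{thm: viscosity comparison principle}) applied on small balls against solutions of perturbed equations with continuous right-hand side $\geq \sigma_m^+(h)$, or a careful regularization/convergence argument for $\sigma_m^+(h)$, which is merely upper semicontinuous in general and must be dominated by continuous densities; this is where the strong $m$-pseudoconvexity of $\Omega$ and the solvability from Theorem~\ref{thm:boundedsubsolution} enter. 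The continuity of $\tilde h$ (Lemma~\ref{lem:projection}$(ii)$, which applies since $h \in \mathcal C^2(\bar\Omega) \subset \mathcal C^0(\bar\Omega)$) is used throughout to make the viscosity arguments legitimate and to identify $\{\tilde h = h\}$ as the support.
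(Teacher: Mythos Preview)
Your reduction to proving $(dd^c\tilde h)^m\wedge\beta^{n-m}\leq \sigma_m^+(h)\,\beta^n$ via Lemma~\ref{lem:projection}$(iii)$ is exactly how the paper begins. From there, however, the paper takes a different route: instead of your direct viscosity-supersolution argument, it uses Berman's penalization. For each $j$ one solves
\[
(dd^c u_j)^m\wedge\beta^{n-m}=e^{j(u_j-h)}\sigma_m^+(h)\,\beta^n,\qquad u_j=h\ \text{on }\partial\Omega,
\]
checks that $h$ is a viscosity supersolution of this equation (precisely your Hessian-comparison computation at contact points), so $u_j\leq h$; then one exhibits explicit potential subsolutions $\psi_j:=(1-1/j)\tilde h+(1/j)(\psi-m\log j)$ squeezing $u_j$ towards $\tilde h$, obtains $u_j\to\tilde h$ \emph{uniformly}, and passes to the limit in the equation using continuity of the Hessian operator under uniform convergence. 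The result is the desired inequality, since $u_j\leq h$ forces the exponential factor to be $\leq 1$.

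Your approach is conceptually appealing and the pointwise test-function analysis is correct: at a contact point $z_0$, a lower test $q$ for $\tilde h$ is also a lower test for $h$, hence $dd^c q(z_0)\leq dd^c h(z_0)$, and monotonicity of $\sigma_m$ on $\Theta_m$ gives $[(dd^c q)^m\wedge\beta^{n-m}]_+\leq\sigma_m^+(h)(z_0)$. But the gap you flag is real and is precisely the hard step. Proposition~\ref{prop: viscosity vs potential general case} only gives the \emph{subsolution} equivalence; the supersolution direction (viscosity $\Rightarrow$ pluripotential upper bound) for an $m$-sh function with a merely upper-semicontinuous right-hand side is not available from the paper's preliminaries. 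Your suggested fix---dominate $\sigma_m^+(h)$ by continuous $F\geq\sigma_m^+(h)$, solve $(dd^c w)^m\wedge\beta^{n-m}=F\beta^n$ locally with boundary data $\tilde h$, and compare---does work in principle, but to conclude $(dd^c\tilde h)^m\wedge\beta^{n-m}\leq F\beta^n$ from $w\leq\tilde h$ you still need to pass from the viscosity comparison to a measure inequality, and letting $F\searrow\sigma_m^+(h)$ afterwards is delicate because $\sigma_m^+(h)$ is only u.s.c. Berman's trick sidesteps all of this: by working with \emph{equations} rather than inequalities and obtaining uniform convergence, one never needs the supersolution-to-potential upgrade. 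So your route is not wrong, but completing it rigorously essentially rebuilds the machinery that the penalized family $u_j$ provides for free.
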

 Here for a  function $h \in \mathcal C^2 ({\bar{\Omega}})$, we set 
$$
\sigma_m^+ (h) := {\bf 1}_G  \, \sigma_m (h),
$$
pointwise on $\Omega$,  where $G$ is the set of points  $z \in \Omega$ such that  $dd^c h (z) \in \Theta_m$ i.e. the $(1,1)$-form $dd^c h (z)$ is $m$-positive (see Definition  \ref{def:mpositive}).

 \begin{proof} To prove (\ref{eq:BerIneq1}), we proceed as in  \cite{GLZ19},  using an idea which goes back to R. Berman \cite{Ber19}. 
 
 Thanks to the property 
 $(ii)$ of Lemma \ref{lem:projection},  it is enough to prove that
 \begin{equation} \label{eq:BerIneq2}
 (dd^c \tilde h)^m \wedge \beta^{n - m}  \leq \sigma_m^+ (h),
\end{equation}
in the sense of currents on $\Omega$.

We procced in two steps:

  1) Assume first that $\Omega$ is smooth strongly $m$-pseudoconvex and $h \in \mathcal{C}^2 (\bar \Omega)$ and  consider the following Dirichlet problem for the complex $m$-Hessian equation depending on the parameter $j \in \N$,
 \begin{equation} \label{eq:BerEqu}
 (dd^c u)^m \wedge \beta^{n - m} = e^{j (u-h)} \sigma_m^+ (h), \, \, u = h \, \, \mathrm{in} \, \, \, {\partial \Omega}.
 \end{equation}
 
 By \cite{Lu13}, for each $j \in \N$, there exists a unique continuous solution $u_j \in \mathcal {SH}_m (\Omega) \cap\mathcal C^0  (\Omega)$ to this problem (see also \cite{Ch16}).
 
 Our goal is to prove that the sequence  $(u_j)_{j\in \N}$ increases to $ \tilde  h$ uniformly  on $\bar{\Omega}$.  We argue as in \cite{GLZ19} with obvious modifications. Recall $h$ is $C^2$ in $\bar{\Omega}$. Then by definition $h$ is a viscosity supersolution to the Dirichlet problem (\ref{eq:BerEqu}). Moreover by Proposition \ref{prop: viscosity vs potential general case}, $u_j$ is a viscosity subsolution to the Dirichlet problem (\ref{eq:BerEqu}).  By the viscosity comparison principle Theorem \ref{thm: viscosity comparison principle}, we conclude that $u_j \leq h$ in $ \Omega$ since $u_j = h$ on $\partial \Omega$.
  
 Therefore the pluripotential comparison principle  Proposition \ref{prop:Comparison Principle} implies  that $(u_j)$  is an increasing sequence.
 On the other hand, by Theorem \ref{thm:boundedsubsolution} there exists a bounded $m$-subharmonic function  $\psi$ on $\Omega$ which is a solution to the complex Hessian equation 
 $$
 \sigma_m (\psi) =  e^{\psi - h} \sigma_m^+ (h)
 $$ 
 with $\psi = h$ on $\partial \Omega$. 
 Moreover  for any $j \in \N$, one can easily  check that the function defined by the formula
 $$
 \psi_j := (1- 1 \slash j) \tilde  h + (1 \slash j) ( \psi - m \log j) 
 $$ 
  is a (pluripotential) subsolution to the equation (\ref{eq:BerEqu}), since $ \tilde  h \leq h$ on $\Omega$. Hence by Proposition \ref{prop:Comparison Principle} we have $\psi_j \leq u_j$ on $\Omega$.
 
 Summarizing we have proved that for any $j \in \N$, $\psi_j \leq u_j \leq \tilde  h  $ on $\Omega$. 
 Therefore $ 0 \leq \tilde  h - u_j \leq \tilde  h - \psi_j =  (1 \slash j) ( \tilde  h - \psi + m \log j) $ on $\Omega$ for any $j \in \N^*$.
 This proves that $u_j$ converges to $\tilde  h$ uniformly  on $\Omega$. Then since $u_j \leq h$ on $\Omega$,  taking the limit as $j \to + \infty$ in (\ref{eq:BerEqu}) we obtain inequality (\ref{eq:BerIneq2}) by the continuity of the Hessian operators for uniform convergence (see \cite{Lu12}).

2) For the general case of a bounded $m$-hyperocnvex domain, we approximate $\Omega$ by an increasing sequence $(\Omega_j)_{j \in \N}$ of smooth strongly $m$-pseudoconvex domains such that for any $j \in \N$, $\Omega_{j + 1} \subset \Omega_j$ and $\Omega = \cup_{j \in \N} \Omega_j$.
Then it is easy to see that the sequence $(P_{m,\Omega} h_j)$ decreases to $P_{m,\Omega} h$ on $\Omega$ (see \cite{GLZ19}). Thus the result follows from the previous case by the continuity of the Hessian operator for monotone sequences.
 \end{proof}

 It's worth mentioning that these envelopes have been considered by several authors in the context of compact K\"ahler manifolds. When $h$ is $C^2$ it was proved recently that $P (h)$ is $C^{1,1}$ (see \cite{ChZh17}, \cite{T18}, \cite{Ber19}) and  equality holds in (\ref{eq:BerIneq1}), which means that $P(h)$ is a solution to an obstacle problem (see \cite{BD12}). 

We can address a similar question. 

\smallskip
 
 {\it Question :}  Is it true that $\tilde h$ is $C^{1,1}$ locally on $\Omega$ when $h$ is $C^2$ on $\bar \Omega$  ? Is there equality in (\ref{eq:BerIneq1}) ?

 \begin{corollary} \label{cor:smoothing} Let $\Omega \Subset \C^n$ be a strongly $m$-pseudconvex domain.  Let $u \in \mathcal {SH}_m (\Omega)$ a negative $m$-subharmonic function. Then there exists a decreasing sequence $(u_j)$ of continuous $m$-subharmonic functions on $\Omega$ with boundary values $0$ which converges pointwise to $u$ on $\Omega$. 
\end{corollary}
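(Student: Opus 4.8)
The plan is to reduce the statement about an arbitrary negative $m$-subharmonic function $u$ to the obstacle problem Theorem \ref{thm:obstacle}, which produces continuous envelopes only from $C^2$ data. First I would regularize: since $u \in \mathcal{SH}_m(\Omega)$ is negative (hence locally bounded above), exhaust $\Omega$ by smooth strongly $m$-pseudoconvex subdomains $\Omega^{(k)} \Subset \Omega^{(k+1)} \Subset \Omega$ with $\bigcup_k \Omega^{(k)} = \Omega$, and on each $\Omega^{(k)}$ consider the standard convolutions $u_\delta = u \star \chi_\delta$, which are smooth, $m$-subharmonic on $\Omega^{(k)}$ for small $\delta$, and decrease to $u$. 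Picking a diagonal sequence $h_j := u_{\delta_j}$ on $\Omega^{(k_j)}$ gives smooth functions decreasing to $u$; the issue is that they are only defined on subdomains and need not have the right boundary behaviour on $\Omega$.

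The second step is to apply the projection/obstacle construction on $\Omega$. For each $j$, choose a $C^2$ function $\tilde g_j$ on $\bar\Omega$ that agrees with $h_j$ on a slightly smaller smooth strongly $m$-pseudoconvex domain and is $\geq h_j$ elsewhere, and such that $\tilde g_j$ is itself decreasing in $j$ on any fixed compact set (one can take $\tilde g_j = \max\{h_j, -1/j\}$ suitably smoothed, or add a large multiple of the defining function $\rho$ — using that $u \leq 0$ so $h_j \le 0$, and truncating from above does no harm); the key point is to arrange $\tilde g_j \downarrow u$ pointwise on $\Omega$. Then set $u_j := P_{m,\Omega}(\tilde g_j)$. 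By Theorem \ref{thm:obstacle}, each $u_j \in \mathcal{SH}_m(\Omega) \cap C^0(\bar\Omega)$, and by Lemma \ref{lem:projection}(ii) together with the property $\lim_{z\to\zeta}\tilde g_j(z) = \tilde g_j(\zeta)$, the boundary values of $u_j$ equal those of $\tilde g_j$ on $\partial\Omega$; choosing $\tilde g_j \equiv 0$ on $\partial\Omega$ forces $u_j|_{\partial\Omega} \equiv 0$ as required.

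The third step is monotonicity and convergence of the $u_j$. Since $\tilde g_j$ is decreasing in $j$, the projections $u_j = P_{m,\Omega}(\tilde g_j)$ are decreasing as well (the defining supremum is over a shrinking family). Let $u_\infty := \lim_j u_j$, which is $m$-subharmonic (or $\equiv -\infty$, excluded since $u_j \ge$ some $m$-subharmonic minorant). On one hand $u_j \ge $ any $v \in \mathcal{SH}_m(\Omega)$ with $v \le u$, and since $u \le \tilde g_j$ we get $u_j \ge P_{m,\Omega}(u) \ge u$... more directly: $u \le \tilde g_j$ for all $j$ and $u$ is $m$-subharmonic, hence $u \le u_j$, so $u \le u_\infty$. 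On the other hand $u_j \le \tilde g_j$ a.e., so $u_\infty \le \lim_j \tilde g_j = u$ a.e.; since both are $m$-subharmonic and equal a.e., they coincide, giving $u_j \downarrow u$ on $\Omega$.

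The main obstacle is the second step: manufacturing the auxiliary $C^2$ obstacles $\tilde g_j$ that simultaneously (a) dominate the convolutions $h_j$ so that $u \le \tilde g_j$ is preserved, (b) vanish on $\partial\Omega$ so the projections have zero boundary values, (c) are monotone decreasing in $j$, and (d) decrease pointwise to $u$ on $\Omega$. Reconciling (b) and (d) near the boundary is the delicate point, since $u$ itself may not tend to $0$ at $\partial\Omega$; but since we only need $u_j \downarrow u$ and the $u_j$ are forced to $0$ on $\partial\Omega$, this is consistent provided the convergence is only pointwise in the interior — one gains the boundary behaviour for free from the envelope and loses nothing because $u \le u_j$ always. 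A clean way to handle (a)--(d) is to take $\tilde g_j := \max\{\psi_{k_j}, \theta_j\}$ where $\psi_{k_j}$ is a $C^2$ function equal to $h_j$ on $\Omega^{(k_j - 1)}$ and $\theta_j$ is a $C^2$ function vanishing on $\partial\Omega$, negative, decreasing to $-\infty$ away from $\partial\Omega$ as $j$ grows — then smooth the max; I would verify monotonicity and pointwise convergence directly from the construction and conclude by the argument of step three.
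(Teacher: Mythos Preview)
Your strategy --- approximate $u$ from above by smooth obstacles and take their $m$-subharmonic envelopes --- matches the paper's, but you have made the execution considerably harder than necessary. The paper does not try to build obstacles that already vanish on $\partial\Omega$. It first reduces to $u$ bounded, takes any decreasing sequence of smooth (continuous would suffice) functions $h_j \downarrow u$ on $\bar\Omega$, sets $v_j := P_{m,\Omega}(h_j)$ --- continuous on $\bar\Omega$ with boundary values $h_j|_{\partial\Omega}$ by Lemma~\ref{lem:projection}(ii), not Theorem~\ref{thm:obstacle} --- and then corrects the boundary values \emph{after} the envelope by putting
\[
u_j := \max\{v_j,\, j\rho\}.
\]
Since $j\rho$ is continuous, $m$-subharmonic, and vanishes on $\partial\Omega$, and since one may arrange $h_j \leq 0$ (harmless because $u \leq 0$), this max is $m$-subharmonic, continuous on $\bar\Omega$, vanishes on $\partial\Omega$, is decreasing in $j$, and converges to $u$ on $\Omega$ because $j\rho \to -\infty$ on compact subsets while $v_j \downarrow u$.

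This single trick --- taking the max with $j\rho$ after the envelope rather than engineering the obstacle beforehand --- dissolves entirely what you call the ``main obstacle''. Your proposed construction $\tilde g_j = \max\{\psi_{k_j}, \theta_j\}$ (then smoothed) is plausible but incomplete as written: you need $\tilde g_j \geq u$ on \emph{all} of $\Omega$, not only on the subdomain where the convolution $h_j$ is defined, to conclude $u_j \geq u$ in step three; monotonicity in $j$ of the smoothed max is not automatic once the extensions $\psi_{k_j}$ are chosen independently; and the smoothing may disturb the condition $\tilde g_j|_{\partial\Omega} = 0$. None of this is fatal, but the paper's one-line fix sidesteps all of it.
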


\begin{proof} We can assume that $u$ is bounded on $\Omega$ and  extend it as a semi-continous function on $\bar \Omega$. Let $(h_j)_{j \in \N}$ be a decreasing sequence of smooth functions in a neighbourhood of $\bar \Omega$ which converges to $u$ in $\bar \Omega$.
For each $j \in \N$, consider the $m$-subharmonic envelope  
$v_j := P_{\Omega} h_j$ on $\Omega$ and set $u_j := \max \{v_j, j \rho\}$ on $\Omega$, where $\rho$ is a continuous  $m$-subharmonic defining function for $\Omega$. Then by Lemma \ref{lem:projection}, by the Lemma $(u_j)$ is a decreasing sequence of continuous $m$-subharmonic functions on $\Omega$ which converges to $u$ on $\Omega$.
\end{proof}

Applying the smoothing method of Richberg it is possible to construct a decreasing sequence of smooth $m$-subharmonic functions on $\Omega$ which converges to $u$ in $ \Omega$ (see \cite{P14}).

\section{Hessian measures of H\"older continuous potentials }

 In this section we will prove two important results which will be used in the proof of the main theorems stated in the introduction. 
 
\subsection{Hessian mass estimates near the boundary}

Here we prove a comparison inequality which seems to be new even in the case of a complex  Monge-Amp\`ere measure. 

\begin{lemma} \label{lem:ComparisonIneq}
Let $\Omega \Subset \C^n$ be a bounded strongly $m$-pseudoconvex domain and  $\varphi \in \mathcal{SH}_m(\Omega)\cap \mathcal{C}^{\alpha} (\bar{\Omega})$ ($0 < \alpha \leq 1$) with $\varphi \equiv 0$ on $\partial \Omega$. Then for any Borel set   $K \subset \Omega$,  we have
$$
\int_K (dd^c\varphi)^m\wedge\beta^{n-m}  \leq L^m \left[\delta_K (\partial \Omega)\right]^{ m \alpha} \, \mathrm{Cap}_m (K,\Omega),
$$
where 
$$
\delta_K (\partial \Omega) := \sup_{z \in K} \mathrm{dist} (z ; \partial \Omega)
$$
and $L > 0$ is the H\"older norm of $\varphi$.
\end{lemma}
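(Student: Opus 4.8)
The plan is to deduce the inequality directly from the definition of $\mathrm{Cap}_m(K,\Omega)$ as a supremum over $m$-subharmonic functions with values in $[-1,0]$, once a sharp pointwise decay estimate for $\varphi$ near $\partial\Omega$ is available. \emph{First I would reduce to the case $K$ compact.} Since $(dd^c\varphi)^m\wedge\beta^{n-m}$ is a Radon measure on $\Omega$ it is inner regular, and by definition $\mathrm{Cap}_m(K,\Omega)=\sup\{\mathrm{Cap}_m(K',\Omega):K'\subset K\ \text{compact}\}$; moreover $\delta_{K'}(\partial\Omega)\le\delta_K(\partial\Omega)$ whenever $K'\subset K$. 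Hence it suffices to prove the estimate for compact $K$ and then pass to the supremum over compact subsets.

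\emph{Next I would establish the pointwise bound on $\varphi$.} As $\varphi\in\mathcal{SH}_m(\Omega)\subset\mathcal{SH}(\Omega)$ is subharmonic, continuous on $\overline{\Omega}$, and vanishes on $\partial\Omega$, the maximum principle gives $\varphi\le 0$ on $\Omega$. For $z\in\Omega$, choosing a nearest boundary point $\zeta\in\partial\Omega$ and using $\varphi(\zeta)=0$ together with the H\"older bound yields $|\varphi(z)|\le L\,\mathrm{dist}(z,\partial\Omega)^{\alpha}$. Setting $\delta:=\delta_K(\partial\Omega)$ and $c:=L\,\delta^{\alpha}$, we get $-c\le\varphi\le 0$ on $K$.

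\emph{Then I would build a competitor for the capacity.} Fix $\epsilon>0$ and put $c':=c+\epsilon$. Then $W:=\{\varphi>-c'\}$ is an open subset of $\Omega$ containing $K$, and $u:=\max(\varphi/c',-1)$ is $m$-subharmonic on $\Omega$ with $-1\le u\le 0$ and $u\equiv\varphi/c'$ on $W$. By the definition of the capacity, $\int_K(dd^c u)^m\wedge\beta^{n-m}\le\mathrm{Cap}_m(K,\Omega)$, while locality of the complex Hessian operator on $W\supset K$ gives $\int_K(dd^c u)^m\wedge\beta^{n-m}=(c')^{-m}\int_K(dd^c\varphi)^m\wedge\beta^{n-m}$. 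Combining these two and letting $\epsilon\to 0^{+}$ yields
\[
\int_K(dd^c\varphi)^m\wedge\beta^{n-m}\le c^{m}\,\mathrm{Cap}_m(K,\Omega)=L^{m}\,[\delta_K(\partial\Omega)]^{m\alpha}\,\mathrm{Cap}_m(K,\Omega),
\]
which is the assertion for compact $K$; the first step then finishes the proof.

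\emph{The delicate point} is the contact set $\{\varphi=-c\}\cap K$: there $\max(\varphi/c,-1)$ and $\varphi/c$ need not carry the same Hessian mass, so one cannot take $c'=c$ directly. Working with $c'>c$ makes $K$ land inside the open set $\{\varphi>-c'\}$, on which $u$ and $\varphi/c'$ genuinely agree, and the sharp constant is recovered in the limit $\epsilon\downarrow 0$. Apart from this, the argument only uses standard facts recalled in Section~2 (locality of the Hessian operator and the maximum principle for subharmonic functions). Alternatively, the same bound follows from the comparison principle applied to $\varphi$ and $c\,u_K^{*}$ on $\{c\,u_K^{*}<\varphi\}$, but the argument above is more economical.
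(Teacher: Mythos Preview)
Your proof is correct. Both your argument and the paper's hinge on the same scaling idea: since $-\varphi\le c:=L[\delta_K(\partial\Omega)]^{\alpha}$ on $K$, the rescaled function $\varphi/c$ has values in $[-1,0]$ on $K$, and this forces $(dd^c\varphi)^m\wedge\beta^{n-m}\le c^m\,\mathrm{Cap}_m(K,\Omega)$.

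The difference lies in how this last step is carried out. The paper uses the relative extremal function $u_K^*$: it observes that $K\subset\{(1+\varepsilon)u_K^*<\varphi/c\}$ up to an $m$-polar set and applies the comparison principle on that sublevel set, together with the formula $\mathrm{Cap}_m(K,\Omega)=\int_\Omega(dd^c u_K^*)^m\wedge\beta^{n-m}$. This requires knowing that $\{u_K<u_K^*\}$ is $m$-polar and that the Hessian measure of the continuous function $\varphi$ does not charge $m$-polar sets. Your route avoids the extremal function entirely: you build a competitor $u=\max(\varphi/c',-1)$ that lies in the defining family for $\mathrm{Cap}_m(K,\Omega)$ and use locality of the Hessian operator on the open set $\{\varphi>-c'\}\supset K$ to identify its mass on $K$. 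The $\varepsilon$-perturbation handling the contact set $\{\varphi=-c\}$ is the exact analogue of the paper's $(1+\varepsilon)$ in front of $u_K^*$. Your version is more economical in that it uses only the definition of capacity and locality, whereas the paper's version invokes slightly heavier potential-theoretic machinery; on the other hand, the paper's approach makes the link to the extremal function explicit, which is natural in the context of the subsequent arguments in Section~5.
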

The constant $\delta_K (\partial \Omega)$ is the Hausdorff distance of $K$ to the boundary in the sense that $\delta_K (\partial \Omega) \leq \varepsilon $ means that $K$ is contained in the $\varepsilon$-neighbourhood of $\partial \Omega$.

The relevant point here is that the estimate takes care of the behaviour at the boundary. It shows in particular that if the volume of the compact set is fixed, the capacity tends to  $+ \infty$ when the compact set approaches the boundary at a rate controlled by the Hausdorff distance of the compact to the boundary. 
\begin{proof} 
By inner regularity, we can assume that $K \subset \Omega$ is compact. 
 Since  $\varphi$ is H\"older continuous on $\bar \Omega$,  we have 
$\varphi (\zeta) - \varphi (z) \leq L \vert \zeta - z\vert^{\alpha}$ for any $\zeta \in \partial \Omega$ and any  $z \in \Omega$. 

Fix a compact set $K \subset \Omega$. Since $\varphi = 0$ in $\partial \Omega$, it follows that for any $z \in K$,
$$
- \varphi (z) \leq \kappa \left[\mathrm{dist} (z,\partial \Omega)\right]^{\alpha} \leq L \left[\delta_K (\partial \Omega)\right]^{\alpha} =:a.
$$
Therefore the function $v := a^{-1} \varphi \in \mathcal{SH}_m (\Omega)$ and $ v \leq 0$ on $\Omega$ and $v \geq - 1$ in $K$.   Fix $\varepsilon >0$ and let $u_K$ be the relative extremal $m$-subharmonic function of $(K,\Omega)$.  Then $ K \subset \{ (1 + \varepsilon) u_K^* < v\} \cup \{u_K < u_K^*\}$. Since the set $\{u_K < u_K^*\}$ has zero $m$-capacity (see \cite{Lu12}), it follows from  the comparison principle that for any $\varepsilon > 0$,
\begin{eqnarray*}
\int_K (dd^c v)^m \wedge \beta^{n - m}  &\leq &\int_{\{ (1 + \varepsilon) u_K^*  < v\}} (dd^c v)^m \wedge \beta^{n - m} \\
&\leq & (1+ \varepsilon)^m \int_{\{ (1 + \varepsilon) u_K^* < v\}} (dd^c u_K^*)^m \wedge \beta^{n - m} \\
& \leq & (1+ \varepsilon)^m \text{Cap}_m (K,\Omega).
\end{eqnarray*}
The lsat inequality follows from (\ref{eq:cap}).
The estimate of the Lemma follows by letting $\varepsilon \to 0$.
\end{proof}

\subsection{H\"older continuity  of Hessian measures} 

In order to prove the H\"older continuous subsolution theorem we need an additional argument following an idea which goes back to \cite{DDGKPZ15} and used in a systematic way in \cite{N18a} (see also \cite{KN19}).

Given a continuous function $g \in \mathcal{C}^0 (\partial \Omega)$ and a real number $R > 0$, we denote by $\mathcal{E}^g_m(\Omega,R)$ the convex set of bounded $m$-subharmonic functions $v $ on $\Omega$ such that $v = g$ on $\partial \Omega$ normalized by the mass condition  $\int_{\Omega} (dd^c v)^m \wedge \beta^{n -m} \leq R$.

In order to prove Theorem B, we will need the following lemma.

  \begin{lemma}\label{lem:ModC}
Let $\varphi\in \mathcal{E}^0_m (\Omega)\cap \mathcal{C}^{\alpha} (\overline\Omega)$, with $ 0 < \alpha \leq 1$.  Then there exists  a constant $C_k  = C (k,m,\varphi, \Omega) >0$ such that for any $0 < \delta < \delta_0$, and any $u,v\in \mathcal{SH}_m (\Omega,R)$ such that $u = v$ on $\partial \Omega$, we have for $1 \leq k \leq m$
\begin{equation} \label{eq:MocEst2}
\int_{\Omega}|u-v| (dd^c\varphi)^k\wedge\beta^{n-k}\leq C_k \, R \,  \left[\Vert u-v \Vert_1\right]^{\tilde \alpha_k },
\end{equation} 
 where  $\tilde \alpha_k   := (\alpha\slash 2)^k \slash m$, provided that $\Vert u-v \Vert_1 \leq 1$.

Moreover  if $g \in C^{1,1} (\partial \Omega)$, for any $1 \leq k\leq m$, there exists a constant $C'_k  = C' (k,m, \varphi,g,\Omega) > 0$  such that for any $0 < \delta < \delta_0$, and every $u,v\in \mathcal{SH}_m (\Omega,R)$ with $u = v = g$ on $\partial \Omega$, we have 

\begin{equation} \label{eq:MocEst1}
\int_{\Omega}|u-v|(dd^c\varphi)^k\wedge\beta^{n-k}\leq C'_k R  \left[\Vert u-v \Vert_1\right]^{\alpha_k},
\end{equation}
where  $\alpha_k := (\frac{\alpha}{ 2})^k$,  provided that $\Vert u-v \Vert_1 \leq 1$.
\end{lemma}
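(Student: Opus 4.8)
The plan is to reduce the estimate to an interpolation between an $L^1(\lambda_{2n})$ bound and a uniform bound, using the H\"older continuity of $\varphi$ as the bridge. First I would record the two basic facts I will interpolate. On one hand, since $u,v \in \mathcal{SH}_m(\Omega,R)$ with the same boundary data, the difference $w := u-v$ satisfies $\|w\|_{L^\infty(\Omega)} \le C\,R^{1/m}$ by the comparison principle together with the Cegrell-type estimates (Lemma \ref{lem:Cegrell}, or Lemma \ref{lem:Cegrell2} when $g \in C^{1,1}$) and the strong $m$-pseudoconvexity of $\Omega$; in fact, a cleaner route is to bound $\int_\Omega |w|\,(dd^c\varphi)^k\wedge\beta^{n-k}$ directly, so one does not even need the sup bound, only control of the total mass $\int_\Omega (dd^c\varphi)^k\wedge\beta^{n-k} \le C(\varphi,k)$, which is finite since $\varphi \in \mathcal{E}^0_m(\Omega)$. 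On the other hand, the only small quantity available is $\|w\|_1 = \int_\Omega |w|\,d\lambda_{2n}$.

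The heart of the argument is to pass from the Lebesgue measure to the Hessian measure $(dd^c\varphi)^k\wedge\beta^{n-k}$ at the cost of the exponent $\tilde\alpha_k = (\alpha/2)^k/m$. Here I would use the standard mollification trick: for $w = u - v$ write $|w| \le |w - w_\delta| + |w_\delta|$ where $w_\delta = w \star \chi_\delta$ (more precisely mollify $u$ and $v$ separately and compare). The term $|w_\delta|$ can be integrated against $(dd^c\varphi)^k\wedge\beta^{n-k}$ and, since $w_\delta$ is smooth, repeated integration by parts (using $\varphi \in \mathcal{E}^0_m(\Omega)$ as a legitimate test function via \eqref{eq:testinequality}) transfers all $k$ factors of $dd^c\varphi$ onto $w_\delta$, or else onto $\beta$, producing a bound of the form $C\,R\,\delta^{-\text{(something)}}\,\|w\|_1^{1/m}$ — the $1/m$ coming from Cegrell's inequality applied to the mixed mass, and the negative power of $\delta$ from $\|dd^c w_\delta\| \lesssim \delta^{-2}\|w\|_1$ type estimates combined with the Chern--Levine--Nirenberg inequalities. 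The term $|w - w_\delta|$ is handled using that $u$ and $v$, being $m$-subharmonic, satisfy $u_\delta \ge u$ and the Poisson--Jensen comparison (Lemma \ref{lem:Poisson-Jensen}), while $\varphi \in \mathcal{C}^\alpha(\overline\Omega)$ lets us control $\int |w-w_\delta|\,(dd^c\varphi)^k\wedge\beta^{n-k}$ by $C\,R\,\delta^{\alpha k}$ or similar after again moving derivatives onto $\varphi$ via integration by parts and invoking Lemma \ref{lem:sup-mean} (the modulus-of-continuity characterization). Optimizing the resulting bound $C\,R\,(\delta^{\alpha k} + \delta^{-N}\|w\|_1^{1/m})$ over $\delta \in (0,\delta_0)$ yields a power $\|w\|_1^{\tilde\alpha_k}$ with $\tilde\alpha_k = (\alpha/2)^k/m$ (the factor $1/2$ and the $k$-th power arising naturally from balancing, iterated $k$ times as one peels off the $dd^c\varphi$ factors one at a time).

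For the second, sharper statement with $g \in C^{1,1}(\partial\Omega)$ and exponent $\alpha_k = (\alpha/2)^k$ without the $1/m$, the improvement comes from using Lemma \ref{lem:Cegrell2} in place of Lemma \ref{lem:Cegrell}: because $g$ is $C^{1,1}$ one can split $u = v_0 + w_0$ with $v_0 \in \mathcal{E}^0_m(\Omega)$ and $w_0 = L\rho + G$ having a bounded complex Hessian, so the factors of $dd^c u$ (and $dd^c v$) appearing after integration by parts can be estimated without losing the $m$-th root — the ``bad'' factor $H_m(\cdot)^{1/m}$ is replaced by a linear bound $(c_{m,n}H_m(\cdot)^{1/m} + M')$, and more importantly the $C^{1,1}$ part contributes a genuine $L^\infty$ bound on $dd^c w_0$, which is what removes the $1/m$ from the exponent. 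I expect the main obstacle to be bookkeeping the integration-by-parts chain cleanly when the boundary data are nonzero: one must repeatedly use Corollary \ref{coro:Comparison Principle} and the mass-preservation identity \eqref{eq:masspreservation} to justify that the boundary terms vanish and that all the intermediate mixed masses stay bounded by $C\,R$, and one must be careful that $\varphi$, which has zero boundary values, is the object playing the role of the test function while $u,v$ (with boundary values $g$) are the ones being differentiated — getting the roles and the direction of the inequality \eqref{eq:testinequality} right at each of the $k$ steps is the delicate point.
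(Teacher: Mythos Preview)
Your plan has a genuine structural gap: you mollify the wrong object. The decomposition $|w| \le |w - w_\delta| + |w_\delta|$ with $w = u-v$ does not give access to the H\"older continuity of $\varphi$, which is the only source of the exponent $\alpha$ in the statement. In the paper the induction on $k$ is driven by mollifying $\varphi$, not $u-v$: one writes
\[
\int_\Omega (u-v)\,(dd^c\varphi)^{k+1}\wedge\beta^{n-k-1}
= \int_\Omega (u-v)\,dd^c\varphi_\delta\wedge(dd^c\varphi)^{k}\wedge\beta^{n-k-1}
+ \int_\Omega (u-v)\,dd^c(\varphi-\varphi_\delta)\wedge(dd^c\varphi)^{k}\wedge\beta^{n-k-1}.
\]
The first integral uses the pointwise bound $dd^c\varphi_\delta \le C\kappa\,\delta^{\alpha-2}\beta$ (which comes precisely from $\varphi\in\mathcal C^\alpha$), reducing the exponent of $dd^c\varphi$ by one at cost $\delta^{\alpha-2}$; the second uses integration by parts (after reducing to $u-v\ge 0$ with compact support via $u_\varepsilon := \max\{u-\varepsilon,v\}$) together with $|\varphi-\varphi_\delta|\le\kappa\delta^\alpha$, producing $\delta^\alpha$ times a mixed mass $\int_\Omega dd^c v\wedge(dd^c\varphi)^k\wedge\beta^{n-k-1}$. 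Optimizing $\delta$ then gives the factor $\alpha/2$ per step. Your scheme instead needs $\int |w-w_\delta|\,(dd^c\varphi)^k\wedge\beta^{n-k} \lesssim \delta^{\alpha k}$, but Poisson--Jensen only controls $\int(u_\delta-u)\,d\lambda_{2n}$, which is exactly the $L^1(\lambda_{2n})$ quantity you are trying to upgrade; invoking it here is circular.

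You also misidentify the origin of the $1/m$ in $\tilde\alpha_k$. It does not come from Cegrell's inequality. In the general (non-$C^{1,1}$) case the mixed mass $\int_\Omega dd^c v\wedge(dd^c\varphi)^k\wedge\beta^{n-k-1}$ need not be finite, so the paper replaces $(u-v)$ by $(u-v)^m$ and estimates $J_k := \int_\Omega (u-v)^m (dd^c\varphi)^k\wedge\beta^{n-k}$ instead. After the same split and integration by parts, the computation $-dd^c[(u-v)^m] \le m(u-v)^{m-1}\,dd^c v$ allows $(m-1)$ further integrations by parts that terminate at $\int_\Omega (dd^c v)^m\wedge\beta^{n-m}\le R$, bypassing the bad mixed mass entirely. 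The $1/m$ then appears only at the very end, from H\"older's inequality $\int (u-v)\,\sigma_k(\varphi) \le C\,J_k^{1/m}$. Your identification of Lemma~\ref{lem:Cegrell2} as the mechanism that removes the $1/m$ in the $C^{1,1}$ case is correct, but for the reason just described: it makes the mixed mass $\int_\Omega dd^c v\wedge(dd^c\varphi)^k\wedge\beta^{n-k-1}$ finite, so one can run the induction directly on $\int (u-v)\,\sigma_k(\varphi)$ without passing to the $m$-th power.
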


\begin{proof}
Recall the following notation for the complex Hessian measure of $\varphi$:
$$
\sigma_k{(\varphi)} := (dd^c\varphi)^k\wedge\beta^{n-k} \  \  \  1 \leq k\leq m.
$$

Observe that for any $\varepsilon > 0,$ $ u_\varepsilon := \max \{u - \varepsilon, v \} \in \mathcal{E}^g_m(\Omega)$ and $u_\varepsilon = v$ near the boundary $\partial \Omega$. By the  the comparison principle, this implies that 
$u_\varepsilon \in \mathcal{E}^g_m(\Omega,R)$.  Therefore, replacing $u$ by $u_\varepsilon$,  we can assume that $u \geq v$ on $\Omega$ and  $u = v$ near the boundary $\partial \Omega$.  Then the required estimates  will follow from this case since $\vert u - v \vert =   ( \max\{u,v\} - u) + (\max\{u,v\}  - v)$.

On the other hand by approximation on the support $S$ of $u-v$ which is compact, we can assume that $u$ and $ v $ are smooth on a neighbourhood of $S$.

We first extend $\varphi$ as a H\"older continuous function on $\C^n$. 
    Indeed recall that for any $z, \zeta \in \bar{\Omega}$, we have $\varphi (z) \leq \varphi (\zeta) + \kappa \vert z - \zeta\vert^\alpha $. Then it is easy to see that the following function
 \begin{equation} \label{eq:Holderextension}
   \bar{\varphi} (z) := \sup \{\varphi (\zeta) - \kappa \vert z - \zeta\vert^\alpha  ; \zeta \in \bar{\Omega}\}, \, \, z \in \C^n.
 \end{equation}
 is  H\"older continuous of order $\alpha$ on $\C^n$  and $ \bar{\varphi} = \varphi$ on $\Omega$. For simplicity, we will denote this extension by $\varphi$.   
 
   We approximate $\varphi$ by convolution and denote by $\varphi_{\delta}$ ($0 < \delta < \delta_0$) the  smooth approximants of  $\varphi $ on $\C^n$, defined  for $z \in \C^n$  by the formula 
 \begin{equation}   \label{eq:regularization2}
 \varphi_\delta (z) := \int_{\C^n} \varphi (\zeta) \chi_\delta(z - \zeta) d \lambda_{2n} (\zeta),
 \end{equation}
  where  $(\chi_\delta)_{\delta}$ is a smooth radial kernel approximating the Dirac unit mass at the origin. 
 
 Then by Lemma \ref{lem:Poisson-Jensen},  for $0 < \delta < \delta_0$, $\varphi_\delta \in  \mathcal{SH}_m (\Omega_{\delta})\cap\mathcal{C}^{\infty}(\mathbb C^n)$.
  To prove the required estimates, we will argue by induction on $0 \leq k \leq m$.  Fix $0 \leq k \leq m-1$ and  $\delta > 0$ and write
  $$
  \int_{\Omega}(u-v) (dd^c\varphi)^{k+1} \wedge\beta^{n-k-1} = A (\delta) + B (\delta), 
  $$
  where   
 $$
  A (\delta):=\int_{\Omega}(u-v)dd^c \varphi_{\delta}\wedge(dd^c\varphi)^k\wedge\beta^{n-k-1},
 $$
     and 
 \begin{eqnarray*}
     B (\delta) &:=& \int_{\Omega}(u-v) dd^c(\varphi - \varphi_{\delta} - \kappa \delta^\alpha)\wedge(dd^c\varphi)^k\wedge\beta^{n-k-1},
 \end{eqnarray*}
 where we recall that $ \varphi - \kappa \delta^\alpha \leq \varphi_\delta \leq \varphi + \kappa \delta^\alpha$ on $\bar{\Omega}$ by hypothesis.

The  first term $A (\delta)$ is estimated as follows. Observe that we have 
$dd^c \varphi_\delta \leq M_1 \kappa \delta^{\alpha -2}  \beta$ pointwise  on $\Omega$, where $M_1 > 0$ is a uniform  bound on the second derivatives of $\chi$. Then since $u \geq v$ we deduce that 
 \begin{equation} \label{eq:estimationA}
 \vert A (\delta) \vert \leq M_1 \kappa \delta^{\alpha -2} \int_{\Omega} (u-v) (dd^c \varphi)^k \wedge \beta^{n-k}.
 \end{equation}

We now estimate the second term $B (\delta)$.
Since $ u - v = 0$ near the boundary $\partial \Omega$ i.e. on $\Omega \setminus \Omega'$, where $\Omega' \Subset \Omega$ is an open set, we can integrate by parts to get the following formula
$$
B (\delta) = \int_{\Omega'} (\varphi_{\delta} - \varphi +   \kappa \delta^\alpha) dd^c (v-u) \wedge(dd^c\varphi)^k\wedge\beta^{n-k-1},
$$
 and then  since $0 \leq \varphi_{\delta}-\varphi + \kappa \delta^\alpha \leq 2 \kappa \delta^{\alpha}$ on $\Omega$, it follows that
$$
\vert B (\delta)\vert \leq  2 \kappa \delta^{\alpha} \int_{\Omega'} dd^c v \wedge(dd^c\varphi)^k\wedge\beta^{n-k-1}.
$$
Therefore, we get
  
 \begin{equation} \label{eq:estimationII}
  |B (\delta)| \leq 2 \kappa \delta^{\alpha} \,   I'_k(v,\varphi),
 \end{equation}
  where $I'_k (v,\varphi) := \int_{\Omega'} dd^c v \wedge(dd^c\varphi)^k\wedge\beta^{n-k-1}$.

 The problem is to estimate the terms $I'_k (v,\varphi) $ with a uniform constant which does not depend on  $\Omega' \Subset \Omega$. 
We could use the obvious inequality  
$$
 \int_{\Omega'} dd^c v \wedge(dd^c\varphi)^k\wedge\beta^{n-k-1} \leq \int_{\Omega} dd^c v \wedge(dd^c\varphi)^k\wedge\beta^{n-k-1},
 $$
to conclude   if we can show that $ I_k  (v,\varphi)  := \int_{\Omega} dd^c v \wedge(dd^c\varphi)^k\wedge\beta^{n-k-1}$ is finite, which is the case thanks to Lemma \ref{lem:Cegrell2}.

 \smallskip   
 \smallskip
 
Therefore by (\ref{eq:claim}) and (\ref{eq:estimationII}),  we get  
\begin{equation} \label{eq:estimation3}
  |B (\delta)| \leq  \kappa  \, d(m,n) \, R \, \delta^{\alpha},
 \end{equation}

 Combining  the inequalities (\ref{eq:estimationA})  and  (\ref{eq:estimation3}), we obtain for $0 < \delta < \delta_0$, 
  \begin{equation} \label{eq:funestimate}
  \int_{\Omega}(u-v) \sigma_{k +1}(\varphi)\leq   M_1 \frac{\kappa \delta^\alpha}{\delta^2} \int_{\Omega} (u-v) \sigma_k (\varphi)  + d(m,n) \, \kappa  \, R \delta^{\alpha}.
  \end{equation}
  
  To finish the proof of the last statement of the lemma, we argue by induction on $k$ for $0 \leq k \leq m$.
  When $k=0$,  the inequality is obviously satisfied with  $C_0 = 1$ and $\alpha_0 = 1$.
  
 Assume that  the  inequality holds for some integer $0 \leq k\leq m-1$ i.e. 
 
  \begin{equation}\label{eq:Hyprec}
  \int_{\Omega}( u-v) \sigma_k{(\varphi)}\leq C_{k}  \left[\Vert u-v \Vert_1\right]^{\alpha_k}.
  \end{equation}
  
  We will show that there exists $C_{k+1} > 0$ such that
  $$
  \int_{\Omega}(u-v) \sigma_{k+1}(\varphi)\leq C_{k+1}  \left[\Vert u-v \Vert_1\right]^{\alpha_{k+1}}.
  $$
   
   Indeed (\ref{eq:funestimate}) and (\ref{eq:Hyprec}) yields
$$  
 \int_{\Omega}(u-v) \sigma_{k+1}(\varphi)\leq
M_1 C_k \kappa \frac{\delta^{\alpha}}{\delta^2}  [\|u-v\|_1]^{\alpha_k} + d(m,n) \kappa  R \delta^{\alpha}.
$$ 

 We want to optimize the last estimate. 
 
Since  $ \|u-v\|_1 \leq 1$, we can take $\delta= \delta_0 [\|u-v\|_1]^{\alpha_k\slash 2} < \delta_0$ in the last inequality to obtain 
  
 \begin{eqnarray} \label{eq:lastestimate}
  \int_{\Omega}(u-v) \sigma_{k+1}(\varphi) &\leq & (M_1 C_k + d (m,n))\,  \kappa \left(\|u-v\|_1]^{\alpha_k \slash 2}\right)^{\alpha }  \nonumber \\
  & \leq & C_{k + 1}  R  [\|u-v\|_1]^{\alpha_{k+ 1}},
 \end{eqnarray}
 where $\alpha_{k + 1} := \alpha_k (\alpha \slash 2)$. 
 This proves the last statement of  the lemma.
 
 \smallskip
 \smallskip
 
   We now proceed to the proof of the first statement.
   As we saw before the main issue is to estimate uniformly the integrals like $ I'_k (v,\varphi)$,
 but  this is not the case in general. We  will rather consider the following integrals which behave much better :
  $$
  J_{k} (u,v,\varphi) := \int_{\Omega} (u-v)^m (dd^c \varphi)^{k} \wedge \beta^{n -k }
  $$
  By H\"older inequality, we have
  \begin{equation} \label{eq:Hineq2}
  \int_\Omega (u-v) (dd^c \varphi)^k \wedge \beta^{n-k} \leq \vert \Omega\vert^{ (m-1) \slash m}\left(J_k (u,v,\varphi)\right)^{1 \slash m},
  \end{equation}
 where $\vert \Omega\vert$ is the volume of $\Omega$.
  
  It is then enough to estimate $J_k (u,v,\varphi)$. We will proceed by induction on  $k$  ($0 \leq k \leq m$) to prove the following estimate
  \begin{equation} \label{eq:Jk}
  J_k (u,v) \leq C_k  R (1 + \Vert u - v\Vert_\infty^{m - 1})  \,  \|u-v\|_1^{\tilde{\alpha}_k}.
  \end{equation}
  
  If $k = 0$, we have $J_0 (u,v) \leq  \Vert u - v\Vert_{\infty}^{m - 1} \Vert u - v\Vert_1$. The inequality is satisfied with $\\tilde{\alpha}_0 = 1$ and $C'_0 = 1$
 
 Assume the estimate (\ref{eq:Jk}) is proved for some integer $0 \leq k \leq m-1$. To prove it for the integer $k + 1$,  we write as before
  $$
 J_{k+1} (u,v)  = A' (\delta) + B' (\delta), 
  $$
  where   
 $$
  A' (\delta):=\int_{\Omega}(u-v)^m dd^c \varphi_{\delta}\wedge(dd^c\varphi)^{k} \wedge\beta^{n-k},
 $$
     and 
 \begin{eqnarray*}
     B' (\delta) &:=& \int_{\Omega}(u-v)^m dd^c(\varphi - \varphi_{\delta} - \kappa \delta^\alpha)\wedge(dd^c\varphi)^{k} \wedge\beta^{n-k}.
 \end{eqnarray*}
 
The  first term $A' (\delta)$ is estimated as before
 \begin{equation} \label{eq:estimationA'}
   |A' (\delta)|\leq  M_1 \frac{\kappa \delta^{\alpha}}{\delta^2} J_k (u,v)
 \leq M_1 \, C_k \, \kappa  R  \,  \delta^{\alpha - 2}  \left[\Vert u-v \Vert_1\right]^{\alpha_k}.
 \end{equation}

We need to estimate the second term $B' (\delta)$. 

Since $ u - v = 0$ near the boundary, we can integrate by parts to get the following formula
\begin{equation} \label{eq:estimationB'0}
B' (\delta) = \int_{\Omega} (\varphi_{\delta} - \varphi +  \kappa \delta^\alpha) \left(- dd^c[ (u-v)^m]\right) \wedge (dd^c\varphi)^k\wedge\beta^{n-k-1}.
\end{equation}

 If $m = 1$, then $k = 0$ and 
 $$- dd^c [(u-v)   \wedge \beta^{n-1} \leq    dd^c v \wedge  \beta^{n-1}, 
 $$
  in the weak sense on $\Omega$.
 Since $0 \leq \varphi_{\delta} - \varphi +  \kappa \delta^\alpha \leq 2 \kappa \delta^\alpha$, it follows from (\ref{eq:estimationB'0}) that
\begin{equation} \label{eq:estimateB'1}
\vert B' (\delta) \vert  \leq  2 \kappa \delta^\alpha \int_\Omega dd^c v \wedge\beta^{n-1} \leq 2 R \, \kappa \, \delta^\alpha.
\end{equation}
 If $ m \geq 2$, a simple computation shows that
\begin{eqnarray*}
 - dd^c [(u-v)^m] & =&  -   m (u-v)^{m -1} dd^c (u - v)  \\
 &-& m (m-1) (u-v)^{m - 2} d(u-v) \wedge d^c(u-v) \\
 &\leq & - m (u-v)^{m -1} dd^c (u- v),
\end{eqnarray*}
Since $  dd^c u  \wedge (dd^c\varphi)^k\wedge\beta^{n-k-1} \geq 0$ weakly on $\Omega$, it follows that
$$
- dd^c [(u-v)^m] \wedge (dd^c\varphi)^k\wedge\beta^{n-k-1} \leq m (u-v)^{m -1} dd^c  v\wedge  (dd^c\varphi)^k\wedge\beta^{n-k-1},
$$
 weakly on $\Omega$.
Hence since $\varphi_{\delta} - \varphi +  \kappa \delta^\alpha \geq 0$,  that
$$
\vert B' (\delta)\vert \leq m  \int_{\Omega} (\varphi_{\delta}-\varphi +  \kappa \delta^\alpha)  (u-v)^{m -1} dd^c v  \wedge(dd^c\varphi)^k\wedge\beta^{n-k-1}.
$$
 Moreover, since $0 \leq \varphi_{\delta}-\varphi + \kappa \delta^\alpha  \leq 2 \kappa \delta^{\alpha}$ on $\Omega$, it follows from (\ref{eq:estimationB'0}) that
  
 \begin{equation} \label{eq:estimation2}
  |B' (\delta)| \leq   2 m \,  \kappa \,  \delta^{\alpha}  \int_\Omega (u-v)^{m -1} dd^c v \wedge (dd^c\varphi)^k\wedge\beta^{n-k-1}
 \end{equation}

Recall that $\beta = dd^c \psi_0$, where $\psi_0 (z) := \vert z\vert^2 -r_0^2$, where $r_0 > 0$ is choosen so that $\psi_0 \leq 0$ on $\Omega$. 
 Then the inequality (\ref{eq:estimation2}) implies that
 \begin{equation} \label{eq:estimationB'}
  |B' (\delta)| \leq   2 m  \kappa \delta^{\alpha}  \int_\Omega (u-v)^{m -1} dd^c v \wedge (dd^c\varphi)^k \wedge (dd^c \psi)^{m - k-1} \wedge\beta^{n-m}.
 \end{equation}
Since  $v, \varphi, \psi_0 \leq 0$ on $\Omega$, repeating the integration by parts $(m-1)$ times, we get 
\begin{equation} \label{eq:estimationC'}
  |B' (\delta)| \leq   2 m! \, \kappa \, \delta^{\alpha}  \Vert \varphi\Vert_{\infty}^k  \Vert \psi_0\Vert_{\infty}^{m - k-1} \int_\Omega (dd^c v)^m \wedge\beta^{n-m}.
  \end{equation} 
  Combining  the inequalities (\ref{eq:estimationA'}), (\ref{eq:estimationB'}) and   (\ref{eq:estimationC'}), we obtain for $0 < \delta < \delta_0$, 
  $$
  \int_{\Omega}(u-v)^m \sigma_{k +1}(\varphi)\leq M_1 \kappa \delta^{\alpha- 2} J_k (u,v)+ d'(m,n)   R \delta^{\alpha},
  $$
where $d'(m,n) = d'(m,n,\varphi,g) > 0$ is a uniform constant.
  Applying the induction hypothesis  (\ref{eq:Jk}), we get 
  
  $$
  \int_{\Omega}(u-v)^m \sigma_{k +1}(\varphi)\leq M_1 \kappa \delta^{\alpha- 2}  C_k \Vert u - v \Vert_{\infty}^{m -1}  \|u-v\|_1^{\alpha_k})+ d'(m,n)   R \delta^{\alpha},
  $$
  
  We want to optimize the last estimate. Since $ \|u-v\|_1 \leq 1$, we can take $\delta= \delta_0 [\|u-v\|_1]^{\alpha_k\slash 2} < \delta_0$ in the last inequality to obtain 
  
 \begin{eqnarray*}
  \int_{\Omega}(u-v)^m \sigma_{k+1}(\varphi) &\leq & (M_1 C_k \Vert u - v\Vert_{\infty}^{m -1} + R d (m,n))\,  \kappa \left(\|u-v\|_1]^{\alpha_k \slash 2}\right)^{\alpha } \\
  & \leq  & C_{k + 1} (1 + \Vert u - v\Vert_{\infty}^{m -1} )  R  [\|u-v\|_1]^{\alpha_{k+ 1}},
 \end{eqnarray*}
 where $\alpha_{k + 1} := \alpha_k (\alpha \slash 2)$ and and $C'_{k+1} := M_1 C'_k    + d'(m,n)$.  
 This proves the estimate (\ref{eq:Jk}) for $k+1$. Taking into account the inequality (\ref{eq:Hineq2}) we obtain the estimate of the lemma with appropriate constants. This finishes the proof of the second part of the lemma.  
\end{proof}
 
 \smallskip
 We also don't know if the lemma is true when the total mass of the Hessian measure $\sigma_m (\varphi) $ on $\Omega$  is infinite.

 \section{Proofs of the main results}

In this section we will give the proofs of Theorem A and Theorem B  stated in the introduction using the previous results.

\subsection{ Proof of Theorem A}

For the proof of Theorem A, we will use the same idea as \cite{KN19}. However, since our measure has not a compact support, we need to use the control on the behaviour of the mass of the $m$-Hessian  of the subsolution close to the boundary, given by Lemma \ref{lem:ComparisonIneq}.

 \begin{proof} 
We extend $\varphi$ as a H\"older continuous function on the whole of $\C^n$ with the same exponent and denote by $\varphi$ the extension (see (\ref{eq:Holderextension})).
 Then denote by $\varphi_{\delta}$ ($0 < \delta < \delta_0$) the smooth approximants of $\varphi $  defined by the formula (\ref{eq:regularization2}).
Then   $\varphi_\delta \in  \mathcal{SH}_m (\Omega_{\delta})\cap\mathcal{C}^{\infty}(\C^n)$.
 
 We consider the $m$-subharmonic envelope of   $\varphi_\delta$ on  $\Omega$  defined by the formula 
 $$
 \psi_\delta := \sup \{\psi \in \mathcal{SH}_m (\Omega) ; \psi \leq \varphi_\delta \, \, \, \hbox{on} \, \, \,  \Omega \}\cdot 
 $$ 

 It follows from Lemma \ref{lem:projection}  that $\psi_\delta \in \mathcal{SH}_m (\Omega)$ and $\psi_\delta \leq \varphi_\delta $ on $\Omega$. 
  
 Fix $0 < \delta < \delta_0$ and a compact set $K \subset \Omega_\delta$ and consider the set
  $$
 E :=\{3 \kappa \delta^\alpha u_K^*+ \psi_\delta<\varphi- 2 \kappa \delta^{\alpha}\} \subset \Omega.
 $$
 
 Since $\varphi $ is H\"older continuous on $\bar \Omega$, we have  $\varphi -  \kappa \delta^\alpha\leq \varphi_{\delta}  \leq \varphi + \kappa \delta^{\alpha}$ on $\Omega$ and then   $\varphi -  \kappa \delta^\alpha \leq \psi_\delta \leq  \varphi_{\delta}  \leq \varphi (z) + \kappa \delta^\alpha$  on $\Omega$. 
Therefore  $\liminf_{z \to \partial \Omega} (\psi_\delta - \varphi + \kappa \delta^\alpha ) \geq 0$, and then $E \Subset \Omega$. By the comparison principle, we conclude that
 
\begin{eqnarray} \label{eq:fundmentalestimate}
 \int_{E}(dd^c\varphi)^m\wedge\beta^{n-m} & \leq & \int_{E}(dd^c(3 \kappa \delta^\alpha u_K^* + \psi_{\delta}))^m\wedge\beta^{n-m} \nonumber \\
 & \leq & 3 \kappa L \delta^\alpha \int_{E}(dd^c(u_K^* + \psi_{\delta}))^m\wedge\beta^{n-m} \\
 &+&\int_{E}(dd^c\psi_{\delta})^m\wedge\beta^{n-m}, \nonumber
 \end{eqnarray}
 where $L := \max_{0 \leq j \leq m - 1} (3 \kappa \delta_0^\alpha)^j$.
 
 Observe that $-1 + \varphi -  \kappa \delta^\alpha \leq u_K^* + \psi_\delta  \leq \varphi + \kappa \delta^\alpha$ on $\Omega$, hence   $\vert u_K^* + \psi_\delta\vert \leq \sup_{\Omega}  \vert \varphi\vert + 1 + \kappa \, \delta_0^\alpha =: M_0$ on $\Omega$. 
 
 Therefore from inequality (\ref{eq:fundmentalestimate}), it follows that
 
\begin{equation} \label{eq:finalestimate1}
  \int_{E}(dd^c\varphi)^m\wedge\beta^{n-m} \leq 3 \kappa \delta^\alpha L  M_0^m \text{Cap}_m (E,\Omega) + \int_{E}(dd^c\psi_{\delta})^m\wedge\beta^{n-m}.
\end{equation}
 
  Since $\varphi$ is H\"older continuous on $\bar \Omega$, we have
 
 \begin{equation}\label{eq:1}
 dd^c\varphi_{\delta}\leq\frac{M_1 \kappa \delta^\alpha}{\delta^{2}}  \beta, \, \, \, \mathrm{on} \, \, \, \Omega,
 \end{equation}
 where $M_1 > 0$ is a uniform constant depending only on $\Omega$.

Hence by Theorem \ref{thm:obstacle}, we have
\begin{equation}\label{eq:1}
 (dd^c\psi_{\delta})^m\wedge\beta^{n-m}  \leq  (\sigma_m (\varphi_{\delta}))_+ \leq\frac{M_1^m \kappa^m \delta^{m \alpha}}{\delta^{2 m}}  \beta^n,
 \end{equation}
 in the sense of currents on $ \Omega$.
 
 Therefore
\begin{eqnarray*}
\int_{E}(dd^c\psi_{\delta})^m\wedge\beta^{n-m} &\leq &  M_1^{m} \kappa^m \delta^{ m (\alpha -2)} \lambda_{2 n} (E). 
 \end{eqnarray*}
 
From this estimate and the inequalities (\ref{eq:finalestimate1}) and (\ref{eq:1}), we deduce that
\begin{equation} \label{eq:finalestimate2}
\int_{E}(dd^c\varphi)^m\wedge\beta^{n-m} \leq  3 \kappa \delta^{\alpha} L M_0^m \text{Cap}_m (E,\Omega)+ M_1^{m} \kappa^m \delta^{(\alpha -2)m} \lambda_{2 n} (E).
\end{equation}

By the volume-capacity comparison inequality (\ref{eq:DK}), it follows that for any fixed $1 < r < \frac{m}{n - m}$, there exists a constant $N (r) > 0$ such that 
\begin{equation} \label{eq:volumeestimate}
\lambda_{2 n}(E) \leq N (r) [\text{Cap}_m(E,\Omega)]^{1 + r}.
\end{equation}

Since  $ E \subset\{u_K^* < - 1 \slash 3\}$,  by  the comparison principle we deduce the following inequality

 \begin{equation}\label{eq:2}
 \text{Cap}_m  (E,\Omega) \leq 3^{m}  \text{Cap}_m (K,\Omega).
 \end{equation}

Since $K \setminus \{u_K < u_K^*\} \subset E$ and $K \cap \{u_K < u_K^*\}$ has zero capacity, it follows that $\int_K (dd^c\varphi)^m\wedge\beta^{n-m} \leq  \int_E (dd^c\varphi)^m\wedge\beta^{n-m}$. 

Therefore if we set $c_m (\cdot) := \text{Cap}_m (\cdot,\Omega)$, we finally deduce from (\ref{eq:finalestimate2}), (\ref{eq:volumeestimate}) and  (\ref{eq:2}) that  for a fixed $0 < \delta < \delta_0$ and any compact set $K \subset \Omega_\delta$,  we have 
\begin{equation} 
\int_K (dd^c\varphi)^m\wedge\beta^{n-m} \leq C_0\kappa  \delta^{\alpha}  c_m (K) + C_1 \kappa^m \delta^{(\alpha -2) m }   [c_m(K)]^{1+ r}.
\end{equation}
where $C_0 :=  3^{m + 1}  L M_0^m$ and $C_1 :=  M_1^{m}  3^{m r}  N (r)$.

By inner regularity of the capacity, we deduce that the previous estimate holds for any Borel subset $B \subset \Omega_\delta$ i.e.
\begin{equation} \label{eq:estimate1}
\int_B (dd^c\varphi)^m\wedge\beta^{n-m} \leq C_0 \kappa \delta^{\alpha}  c_m (B) + C_1 \kappa^{m \alpha } \delta^{(\alpha -2) m}   [c_m(B)]^{1+r}.
\end{equation} 

Let $K \subset \Omega$ be any fixed compact set and $0 < \delta < \delta_0$. Then

$$
\int_K (dd^c\varphi)^m\wedge\beta^{n-m}  = \int_{K \cap \Omega_\delta} (dd^c\varphi)^m\wedge\beta^{n-m}  + \int_{K \setminus \Omega_\delta} (dd^c\varphi)^m\wedge\beta^{n-m}.
$$

We will estimate each term separately. By (\ref{eq:estimate1}) the first term is estimated easily:
$$
\int_{K \cap \Omega_\delta} (dd^c\varphi)^m\wedge\beta^{n-m}  \leq C_0 \kappa \delta^{\alpha}  c_m (K) + C_1 \kappa^{m \alpha}\delta^{-2m +  m \alpha}   [c_m(K)]^{1+ r}.
$$
To estimate the second term we apply Lemma \ref{lem:ComparisonIneq} for the Borel set $ B := K \setminus \Omega_\delta$. Since $\delta_B (\partial \Omega) \leq \delta$ we get
$$
\int_{K \setminus \Omega_\delta} (dd^c\varphi)^m\wedge\beta^{n-m}  \leq \kappa^m \delta^{m \alpha} c_m (K).
$$

Therefore we obtain the following estimate. For any $0 < \delta <\delta_0$ and any compact set $K \subset \Omega$, we have

 \begin{equation} \label{eq:fundIneq}
 \int_{K}(dd^c\varphi)^m\wedge\beta^{n-m} \leq C_0 \kappa \delta^{\alpha}  c_m (K) + C_1 \kappa^m \delta^{(\alpha -2) m }   [c_m(K)]^{1+ r} +  \kappa^m \delta^{m \alpha} c_m (K). 
\end{equation}  
 
 We want to optimize the right hand side of (\ref{eq:fundIneq})  by taking $\delta := [c_m (K)]^{\frac{r}{(2-\alpha) m + \alpha}}$.
 
 Observe that if $ \delta_K(\partial \Omega) \leq [c_m (K)]^{\frac{r}{(2-\alpha) m + \alpha}}$, then by Lemma \ref{lem:ComparisonIneq} we get
 \begin{eqnarray} \label{eq:finalIneq1}
   \int_{K}(dd^c\varphi)^m\wedge\beta^{n-m}  \leq  \kappa^m [c_m (K)]^{1 + \frac{m \alpha r}{(2-\alpha) m + \alpha}}.
  \end{eqnarray}
 
  Now assume that $  [c_m (K)]^{\frac{r}{(2-\alpha) m + \alpha}} < \delta_K (\partial \Omega) \leq \delta_0$. Then we can take $\delta := [c_m (K)]^{\frac{r}{(2-\alpha) m + \alpha}}$ in inequality (\ref{eq:fundIneq}) and get
  
  \begin{equation}\label{eq:finalIneq2}
   \int_{K}(dd^c\varphi)^m\wedge\beta^{n-m} \leq (C_0 \kappa + C_1 \kappa^m  + \kappa^m) \, [c_m (K)]^{1 + \frac{\alpha r}{(2-\alpha) m + \alpha}}.
  \end{equation}
  Combining inequalities  (\ref{eq:finalIneq1} ) and (\ref{eq:finalIneq2}), we obtain the estimate of the theorem with the constant $A$ given by the following formula:
  \begin{equation} \label{eq:finalConst}
  A := C_0 \kappa + C_1 \kappa^m  + \kappa^m.
\end{equation}   
\end{proof}

\subsection{Proof of Theorem B}

Now we are ready to prove Theorem B from the introduction using Theorem A and Lemma \ref{lem:ModC}.

\begin{proof}  According to Theorem \ref{thm:boundedsubsolution}, we know that there is a unique function $u\in \mathcal{SH}_m(\Omega)\cap L^{\infty}(\overline{\Omega}) $ such that 
$$ 
(dd^c u)^m\wedge\beta^{n-m}=\mu,
$$
in the weak sense on $\Omega$ and $ u=g $ on $\partial\Omega$.

We want to prove that $u$ is H\"older continuous up to the boundary.

For  $0 < \delta < \delta_0$ and  denote as before by 
 $u_{\delta}(z)$ the $\delta$-regularization of $u$. 
Recall that  $u_\delta$ is smooth on $\C^n$ and $m$-subharmonic on $\Omega_\delta$.

The first step is to use the H\"older continuity of the boundary datum $g$ to construct global barriers to show that $u$ is $m$-subharmonic near the boundary and to deduce bounded $m$-subharmonic global approximants $\tilde{u}_{\delta}$  close to ${u}_{\delta}$ on $\Omega_\delta$.

By \cite{Ch16} there exists a continuous maximal $m$-subharmonic function $w \in \mathcal{SH} (\Omega) \cap C^{\alpha} (\bar \Omega)$ such that $w = g$ on $\partial \Omega$. Then $v : = w + \varphi \in  \mathcal{SH} (\Omega) \cap \mathcal{C}^{\alpha} (\bar \Omega)$ is a subsolution to the Dirichlet problem (\ref{eq:DirPb}) such that $v = g$ on $\partial \Omega$. Hence $v \leq u \leq w$. This proves that $u$ is H\"older continuous near the boundary $\partial \Omega$ (see Remark  \ref{rem:HolderBoundary}). 

Therefore to prove that $u$ is H\"older continuous on $\bar{\Omega}$ with exponent $\theta \in ]0,1[$, it's enough by Lemma \ref{lem:sup-mean} to  prove that there exists a constant $L > 0$ such that for $\delta < \delta_0$, 
\begin{equation} \label{eq:Holdercontinous} 
\sup_{\Omega_\delta} (u_\delta  - u) \leq C \delta^{\theta}.
\end{equation}

We claim that there exists a constant $\kappa > 0$ such that for $z\in\partial\Omega_{\delta}$, we have $ u (z) \geq {u}_{\delta}(z)  -  \kappa \delta^{\alpha} $. Indeed fix $z\in\partial\Omega_{\delta}$. Then there exists $\zeta \in \partial \Omega$ such that $\vert z -\zeta \vert  = \delta$. Since $v \leq u \leq w$ on $\Omega$ and they are equal on $\partial \Omega$, it follows that 
\begin{eqnarray*}
u_{\delta} (z) &  \leq &  w_\delta (z)  \leq  w (z) + \kappa_w \delta^\alpha \\
& \leq &  w (\zeta) + 2 \kappa_w \delta^\alpha = v (\zeta) +  2 \kappa_w \delta^\alpha \\
& \leq & v (z) +( \kappa_v + 2 \kappa_w)  \delta^\alpha \\
& \leq & u (z) + \kappa \delta^\alpha,
\end{eqnarray*}
where $\kappa := \kappa_v + 2 \kappa_w$ and $ \kappa_v$ (resp. $\kappa_w$) is the H\"older constant of $v$ (resp. $w$). This proves our claim. 

Therefore the following function
$$
 \tilde{u}_{\delta}:= \left\{ \begin{array}{lcl}
\max\{{u}_{\delta} -  \kappa \delta^{\alpha},u \} &\hbox{on} & \Omega_{\delta}, \\
u  &\hbox{on} & \Omega\setminus\Omega_{\delta}
\end{array}\right.
$$
is $m$-subharmonic and bounded on $\Omega$ and satisfies 
$0 \leq \tilde{u}_{\delta}(z) - u (z) =(u_\delta  (z) - u (z) - \kappa \delta^{\alpha})_+ \leq  u_\delta  (z) - u (z)$ for $z\in \Omega_{\delta}$ and $\tilde{u}_{\delta}(z) - u (z) = 0$ on $\Omega \setminus \Omega_\delta$.

Moreover, since $ \tilde{u}_{\delta} \geq u$ on $ \Omega$ and $\tilde{u}_{\delta} = u$ on $\Omega \setminus \Omega_\delta$, Corollary \ref{coro:Comparison Principle} implies that for any $0 < \delta < \delta_0$, we have

$$
\int_{\Omega}(dd^c\tilde{u}_{\delta})^m\wedge\beta^{n-m}\leq\int_{\Omega}(dd^c u)^m\wedge\beta^{n-m} \leq \mu (\Omega) < + \infty.
$$

The second step is to apply stability estimates. Since $\tilde u_\delta = u$ on $\Omega \setminus \Omega_\delta$,  Proposition \ref{prop:stability} implies that for 
$$
0 <\gamma <  \gamma (m,n,\alpha):= 
 \frac{m \alpha}{ m (m + 1) \alpha  +  (n-m) [(2 - \alpha)m + \alpha]},
 $$
there exists a constant $D_\gamma > 0$ such that  any $0 < \delta < \delta_0$, 

\begin{equation} \label{eq2}
\sup_{\Omega}(\tilde{u}_{\delta}-u) \leq  D_{\gamma} \left(\int_{\Omega}(\tilde{u}_{\delta}-u) d\mu\right)^{\gamma}.
\end{equation}

  On the other hand, since $\mu\leq (dd^c\varphi)^m\wedge\beta^{n-m}$ on $\Omega$,   it follows from Theorem A that we can apply Lemma \ref{lem:ModC},  if we can ensure that $\Vert (\tilde{u}_{\delta}-u \Vert_{1} :=   \int_{\Omega}(\tilde{u}_{\delta}-u) d\lambda_{2n} \leq 1$ for $\delta > 0$ small enough. 
 
 Indeed by the estimate (\ref{eq:PJ2}), we see that there exists a uniform constant $b_n > 0$ such that for $0 < \delta < \delta_0$,
 \begin{equation*}
 \int_{\Omega_\delta}({u}_{\delta}-u) d \lambda_{2 n} \leq  b_n \, \text{osc}_{\Omega} u \leq 1,
\end{equation*}
for $0 < \delta <  \delta_1$, where $\delta_1 > 0$ is small enough.

 To prove (\ref{eq:Holdercontinous}), we will consider the two cases separately.
  
 \smallskip
  
  1)   Assume first  that $g \in \mathcal C^{1,1} (\partial \Omega)$.  
  By the inequality (\ref{eq:PJ1}) we have for $0 < \delta < \delta_0$,
  \begin{equation} \label{eq:PJ}
 \int_{\Omega_\delta}({u}_{\delta}-u) d \lambda_{2 n} \leq  b_n \delta^2\Vert \Delta u\Vert_{\Omega_\delta}.
\end{equation}
  By the inequality (\ref{eq:claim})  of  Lemma \ref{lem:Cegrell2},  we have for $0 < \delta < \delta_0$,

 \begin{eqnarray} \label{eq:PJ0}
   \|\Delta u\|_{\Omega_\delta} \leq  \int_\Omega dd^c u \wedge \beta^{n - 1} 
  &\leq  & d(m,n) \left(M' + \mu (\Omega)^{1 \slash m}\right) .
 \end{eqnarray}
  
  On the other hand, applying the inequality (\ref{eq:MocEst1}) of  Lemma \ref{lem:ModC}, we get for $0 < \delta < \delta_1$, 

 \begin{eqnarray} \label{eq:stab}
\int_{\Omega}(\tilde{u}_{\delta}-u)d\mu & \leq & C_m  \left(\int_{\Omega}(\tilde{u}_{\delta}-u)(z)d\lambda_{2 n} (z)\right)^{\alpha_m} \\
 &\leq  & C_m\left(\int_{\Omega_{\delta}}({u}_{\delta}(z)-u(z)d\lambda_{2 n} (z)\right)^{\alpha_m}, \nonumber 
 \end{eqnarray}
 where the last inequality follows from the fact that $0 \leq \tilde{u}_{\delta}-u \leq \tilde{u}_{\delta}-u$ on $\Omega$ and $\tilde{u}_{\delta}= u$ on $\Omega \setminus \Omega_\delta$.
 
Therefore taking into account the inequalities (\ref{eq:PJ}), (\ref{eq:PJ0}) and (\ref{eq:stab}),  we get  for $0 < \delta < \delta_1$, 

 \begin{equation} \label{eq:estimatemu}
\int_{\Omega}(\tilde{u}_{\delta}-u)d\mu  \leq  C_m' \delta^{2 \alpha_m}.
\end{equation} 
 
 Finally  from  (\ref{eq2}) and (\ref{eq:estimatemu}) we conclude that  for $0 < \delta < \delta_1$, 
 \begin{equation} \label{eq:finalestimate}
 \begin{array}{lcl}
 \sup_{\Omega_{\delta}}( {u}_{\delta}-u)&\leq&\sup_{\Omega}(\tilde{u}_{\delta}-u)+ \kappa \delta^{\alpha} \\
 &\leq &  {C''_m} \delta^{2\gamma\alpha_m} + \kappa \delta^{\alpha}.
 \end{array}
 \end{equation}
This proves the required estimate (\ref{eq:Holdercontinous}) with $\theta = 2 \gamma \alpha_m < \alpha$.

 \smallskip
 \smallskip
 
2) In the general case the estimate (\ref{eq:PJ0}) on   $\|\Delta u\|_{\Omega_\delta}$ is not valid anymore.
 
  However  by (\ref{eq:PJ}) and (\ref{eq:PJ2}), we get for $0 < \delta < \delta_1$,
   \begin{equation} \label{eq:6}
\int_{\Omega_\delta}({u}_{\delta}-u)d\lambda_{2n}  \leq  {\tilde C}'_m \delta \leq 1.
\end{equation}

Therefore  taking into account    (\ref{eq2}),  (\ref{eq:6}) and the inequality (\ref{eq:MocEst2}) of  Lemma \ref{lem:ModC},  we get   for $0 < \delta < \delta_1$, 
 $$
  \sup_{\Omega_{\delta}} ({u}_{\delta}-u)  \leq    \tilde C_m \, \delta^{\gamma\alpha_m \slash m} + \kappa \delta^{\alpha},
$$
 since $\Vert \tilde u_{\delta} - u\Vert_{\infty} \leq \text{osc}_{\Omega} u$.
 This  proves the required estimate (\ref{eq:Holdercontinous}) with $\theta = \gamma\alpha_m \slash m < \alpha$.  
 \end{proof}
\smallskip
\smallskip
\begin{remark} If  we assume that $g \in C^{\theta} (\partial \Omega)$, it is easy to see that the solution  in Theorem B is H\"older continuous with any exponent $0 < \alpha' < \min \{\theta \slash 2, \gamma \tilde \alpha_m\}$. 
\end{remark}

\smallskip

\noindent{\bf Warning :}  This paper presents a corrected version of the one published recently in  \cite{BZ20}. Indeed the proof of \cite[Theorem B]{BZ20} was not complete because of an error in \cite[Lemma 4.2]{BZ20}. An erratum has been submitted to the journal and hopefully it will appear soon.
 
\smallskip
\smallskip

\noindent{\bf Aknowledgements :} 
   The authors  are indebted to Hoang Chinh Lu for his very careful reading of the first version of this paper and for valuable comments that helped to correct some errors and to improve the presentation of the paper. They also would like to thank Vincent Guedj for interesting discussions and  Ngoc Cuong Nguyen for useful exchanges about his earlier work on this subject.                                                                                                                             

 This project started when the first author was visiting the Institute of Mathematics of Toulouse (IMT) during the spring $2017$ and $2018$. She would like to thank IMT for the invitation and for providing excellent research conditions. 


\end{document}